\newtheorem{theorem}{Theorem}[section]
\newtheorem{corollary}[theorem]{Corollary}
\newtheorem{lemma}[theorem]{Lemma}
\newtheorem{proposition}[theorem]{Proposition}
\theoremstyle{definition}
\newtheorem{definition}[theorem]{Definition}
\newtheorem{example}[theorem]{Example}
\newtheorem{note}[theorem]{Note}
\title{Near-integral fusion}
\author{\small{Jingcheng Dong and Andrew Schopieray}}
\date{}
\begin{document}

\maketitle

\begin{abstract}
We abstract the study of irreducible characters of finite groups vanishing on all but two conjugacy classes, initiated by S.\ Gagola, to irreducible characters of fusion rings whose kernel has maximal rank.  These near-integral fusion rings include the near-groups which are currently one of the most abundant sources of novel examples of fusion categories to date.  We generalize many of the known results on near-group fusion categories from the literature to near-integral fusion categories and characterize when such categories are braided.  In particular, braided near-integral fusion categories describe all braided fusion categories which are almost symmetrically braided.  This novel result allows a digestible characterization of the over $300$ braided equivalence classes of premodular fusion categories of rank $6$ or less.
\end{abstract}

\tableofcontents


\section{Introduction}

\par The irreducible characters of finite groups vanishing on all but two conjugacy classes were initially studied by S.\ Gagola in the 1980's \cite{MR721927} and have since been coined \emph{Gagola characters}.  A Gagola character $\chi$ of a finite group $G$ is unique when it exists and $|G|>2$.  One observes that the degree of this irreducible character is large compared to $|G|$ which naturally leads to a study of pairs $\chi,G$ of finite groups with an irreducible character $\chi$ such that $|G|/\mathrm{deg}(\chi)=|G|+k$ for small integers $k\in\mathbb{Z}_{\geq0}$.  Only the trivial group has $\chi$ such that $k=0$ and those $G$ with $\chi$ such that $k=1$ are either order $2$ or in an infinite family of doubly-transitive Frobenius groups.  Interestingly, for $k>1$, there are finitely many such pairs $G,\chi$ with an initial bound given by N.\ Snyder in \cite{MR2383494} which has since been made sharp \cite{MR3276228} (see also Example \ref{ex:exspecial}).

\par From a modern outlook, the characters of a finite group $G$ form an algebraic structure known as a fusion ring and one could equivalently attempt to describe Gagola characters of these more general objects.  One quickly finds the literal generalization, of irreducible characters vanishing on all but two conjugacy classes, bears little resemblance at all to the original theory but may be interesting in its own right.  We instead seek in this manuscript characters of fusion rings with a proper kernel \cite[Definition 3.1]{MR3552799} of maximal rank, and prove (Theorem \ref{thm:char}) that a maximal fusion subring $S\subset R$ exists with $\mathrm{rank}(S)+1=\mathrm{rank}(R)$ if and only if $S$ is the kernel of a distinguished irreducible character of $R$.  As a corollary of this characterization and \cite[Theorem 5.7]{MR4413278}, a finite group has a Gagola character if and only if the character ring of $G$ has an irreducible character whose kernel is a proper fusion subring of maximal rank, supporting the fact that this is the natural generalization of Gagola characters for fusion rings.

\par Meanwhile, the study of representation theory of finite groups $G$ has inspired the study of more general categorical structures known as fusion categories \cite{ENO}, with the categories of $G$-graded vector spaces and the finite-dimensional representation theory of $G$ being canonical examples with other large families of categories coming from the representation theory of quantum groups (see \cite{MR4079742} and references within).  Applications outside of representation theory include areas of mathematical physics (see \cite{MR3308880} and references within) and quantum computation \cite{MR3777017} utilizing braided fusion categories as models of anyonic topological orders.  But one major open problem in the field of fusion categories is the construction of examples.  There are very few examples of fusion categories known which are not in some way related to either the representation theory of quasi-Hopf algebras or quantum groups through a small list of categorical constructions.  One family of exotic examples in this sense are the near-group fusion categories (see \cite{MR3167494} and references within).  The near-group fusion categories are fusion categories with exactly one noninvertible isomorphism class of simple objects and have provided the most promise for creating an infinite family of examples which cannot be described by classical representation theory and basic categorical constructions, though only finitely many are known to exist at this time.

\par Inspired by the near-group fusion categories, we label the fusion categories possessing a proper fusion category of maximal rank \emph{near-integral}, as the Frobenius-Perron dimensions of objects in the maximal subcategory must be integers.  These categories were first studied explicitly in \cite{MR4413278} by G.\ Chen, Z.\ Wang, and the first author.  In this manuscript we prove that many of interesting results about near-group fusion categories generalize to near-integral fusion categories including the fact that they are Galois conjugate to pseudounitary fusion categories (Corollary \ref{cor:1}), they have heavily restricted fusion rules (Lemma \ref{lem:divides}), and they are commutative when the Frobenius-Perron dimension is irrational (Proposition \ref{propcomm}).  It is evident that near-integral fusion categories are intimately related with fusion categories containing a simple object of large dimension, as is the case with finite groups; we leave this line of investigation to future work.

\par Near-integral fusion categories also appear naturally when studying braided fusion categories.  Symmetrically braided fusion categories are in correspondence with the representation categories of finite groups up to small alterations of the natural braiding (see Example \ref{exsynm}).  Even when a braided category is not symmetrically braided, it contains a symmetrically braided subcategory known as the symmetric center.  Hence the study of braided near-integral fusion categories includes all braided fusion categories which are almost symmetrically braided in this sense.  These almost symmetrically braided fusion categories can be seen as a mirror of the slightly degenerate braided fusion categories \cite[Definition 2.1]{MR3022755}, which includes the important class of supermodular fusion categories in the presence of a spherical structure.  In Section \ref{sec:brad}, we characterize nonsymmetrically braided near-integral fusion categories as equivariantizations of pointed modular fusion categories of ranks $2$, $3$, or $4$ (Propositions \ref{prop:tan} and \ref{prop:stan}) with exactly four exceptions (Proposition \ref{prop:fib}) which are not integral.  This generalizes a similar classification obtained by J.\ Thornton in the case of braided near-groups \cite{thornton2012generalized} but only finitely many examples exist in this stricter setting aside from the classical examples of D.\ Tambara and S.\ Yamagami \cite{MR1659954} (see also \cite{siehler}).

\par As an application of our classification, we are able to explicitly describe the braided equivalence classes of premodular fusion categories, i.e.\ braided fusion categories equipped with a spherical structure, of rank $6$ and less in Section \ref{sec:class}.   The assumption of a spherical structure is very mild.  At the present time it is not known if all fusion categories possess a spherical structure but all known examples do.  The history of the classification of premodular fusion categories appears brief, but a wealth of studies complementary to this subject have been vital to its progression and are cited in Section \ref{sec:class} as needed.  We summarize here only the papers with explicit classification of premodular categories as a goal.  All fusion categories of ranks $1$ and $2$ have premodular structures.  The former are trivial (see \cite[Corollary 4.4.2]{tcat}, for example) and all rank $2$ fusion categories were classified in \cite{ostrik} by V.\ Ostrik who then classified premodular fusion categories of rank $3$ in \cite{premodular}; the braided assumption was then latter removed in \cite{ost15}.  Subsequently, and following the classification of modular data of ranks $4$ \cite{MR2544735} and $5$ \cite{MR3632091}, P.\ Bruillard described the possible fusion rules of premodular fusion categories of rank $4$ \cite{MR3548123} and rank $5$ \cite{MR3743161} with C.\ M.\ Ortiz-Marrero, but the braided equivalence classes of categories were not described.  We fill in this missing piece from the literature and extend the classification to rank $6$.  Aside from the clear benefit of having a database of small rank braided fusion categories, an explicit description of braided equivalence classes of premodular fusion categories up to rank $6$ allows a glimpse of the growth rate of the number of such categories up to equivalence.  It is also interesting to note how the proportion of equivalence classes of categories from symmetrically braided to nondegenerately braided changes as rank increases.

\par The $29$ braided equivalence classes of premodular fusion categories of ranks $\leq3$ are reiterated in Figure \ref{fig:catranklessthan4}.  There is a unique class of rank $1$, $8$ of rank $2$, and $20$ of rank $3$.  The $57$ braided equivalence classes of rank $4$ premodular categories are cataloged in Figure \ref{fig:catrank4}.  The $55$ braided equivalence classes of rank $5$ premodular categories are cataloged in Figures \ref{fig:catrank5} and \ref{fig:catrank5b}.  The $224$ braided equivalence classes of rank $6$ premodular categories are cataloged in Figures \ref{fig:catrank56}--\ref{fig:rank34567}, \ref{fig:rank23456}, and \ref{fig:nondegen}.  With few exceptions, all relevant categories are described with the notation in Figure \ref{fig:notation}.

\begin{figure}[H]
\centering
\begin{align*}
\begin{array}{|c|c|c|}
\hline\text{Notation} & \text{Input data} & \text{Description/reference} \\\hline\hline
\mathrm{Rep}(G,\nu) & \text{Finite group }G\text{ and central} & \mathrm{Rep}(G)\text{ with symmetric braiding} \\
 & \nu\in G\text{ with }\nu^2=e & (\text{see Section 9.9 of }\cite{tcat}) \\\hline
\mathrm{Vec} & \text{none} & \mathrm{Rep}(C_1) \\\hline
\mathrm{sVec} &\text{none} & \mathrm{Rep}(C_2,\nu),\,\,\nu\text{ nontrivial} \\\hline
\mathcal{I}_q & \text{primitive 16th root of unity }q & \text{Ising braided fusion categories} \\\hline
\mathrm{Rep}(G)^\alpha & \text{Finite group }G\text{ and }& \mathrm{Rep}(G)\text{ with nonsymmetric}\\
 &\text{root of unity }\alpha & \text{braiding }(\text{see Example }\ref{braidfin}) \\\hline
\mathcal{C}(G,q) & \text{Abelian group }G\text{ and} & \text{Pre-metric group categories} \\
 & \text{quadratic form }q:G\to\mathbb{C}^\times & (\text{see Example }\ref{ex:braidgroup}) \\\hline
\mathcal{C}(X,\ell,q) & \text{Dynkin label }X,\text{ integer }\ell  & \text{Premodular quantum group} \\
 & \text{and }q^2\text{ an }\ell^{\tiny\mathrm{th}}\text{ root of unity} & \text{categories }(\text{see Example }\ref{exquant}) \\\hline
\end{array}
\end{align*}
    \caption{Fixed notation for frequently-mentioned braided categories}%
    \label{fig:notation}%
\end{figure}


\section*{Acknowledgement}

The first author was funded in part by the Natural Science Foundation of Jiangsu Providence (Grant No. BK20201390).  The authors would like to thank Gert Vercleyen, S\'ebastien Palcoux and Dmitri Nikshych for fruitful discussion during the finalization of this manuscript.  The second author would like to thank Jack Pech for assistance during the preparation of the manuscript.


\section{Preliminaries}\label{sec:prem}

The definitions and concepts presented in this section are mostly standard.  We refer the interested reader to \cite{tcat} for further details.  Many examples are included to illustrate concepts that will also be necessary for arguments in Section \ref{sec:class}.

\subsection{Fusion rings}\label{ssec:ring}

\par A fusion ring is an abstracted version of the integral group ring $\mathbb{Z}G$ of a finite group $G$.  Roughly, it is a ring with a finite additive basis whose \emph{fusion rules}, or structure constants in the chosen basis, are nonnegative integers but there is a looser notion of inverse: the involution known as \emph{duality}.

\begin{definition}[{\cite[Chapter 3]{tcat}}]
A pair $(R,B)$ of a unital associative ring $R$ which is free as a $\mathbb{Z}$-module and a basis $B=\{1_R=:b_1,\ldots,b_n\}$ is a \emph{fusion ring} if $b_ib_j=\sum_{k=1}^nc_{ij}^kb_k$ with $c_{ij}^k\in\mathbb{Z}_{\geq0}$, and there exists an involution $\ast$ of $\{1,\ldots,n\}$ such that the map $a=\sum_{j=1}^na_jb_j\mapsto a^\ast=\sum_{j=1}^na_jb_{j^\ast}$ is an antiautomorphism of $R$ with the property that $c_{ij}^1=1$ if $i=j^\ast$ and $c_{ij}^1=0$ otherwise.
\end{definition}

One can study the (complex) representation theory of a fusion ring $(R,B)$ in a manner analogous that of finite groups as the complexified ring $R\otimes_\mathbb{Z}\mathbb{C}$ is semisimple \cite[1.2(a)]{MR933415}.  As expected, there is a well-developed character theory for fusion rings, and even more general combinatorial objects \cite[Section 2]{MR2535395}, including common results such as orthogonality of characters \cite[Propositions 2.12--2.13]{MR2535395}.  A striking difference between integral group rings $\mathbb{Z}G$ and fusion rings as a whole is the lack of a trivial representation/character $\iota:R\to\mathbb{C}$, i.e.\ mapping all basis elements to $1$.  The trivial representation is instead replaced by the \emph{Frobenius-Perron} representation $\mathrm{FPdim}:R\to\mathbb{C}$ which maps every basis element $b_i\in B$ to the maximal real eigenvalue of the fusion matrix $N_i:=[c_{ij}^k]_{j,k}$ \cite[Section 3.3]{tcat}.  As shown in \cite[Proposition 1.6]{2019arXiv191212260G}, given a fusion ring $(R,B)$ and a number field $\mathbb{K}$, the elements $x\in R$ such that $\mathrm{FPdim}(x)\in\mathbb{K}$ form a fusion subring $R_\mathbb{K}\subseteq R$.  Common uses for such a result include that every fusion ring has a maximal integral subring, $R_\mathbb{Q}$, and a maximal weakly integral subring $R_\mathbb{K}$ where $\mathbb{K}:=\mathbb{Q}(\sqrt{n}:n\in\mathbb{Z}_{\geq2})$.

\begin{definition}
Let $(R,B)$ be a fusion ring.  We say $x\in R$ is \emph{invertible} if $\mathrm{FPdim}(x)=1$.  The fusion subring generated by all invertible elements of $R$ is denoted $R_\mathrm{pt}$ and if $R=R_\mathrm{pt}$ we say $R$ is \emph{pointed}; in this case $R\cong\mathbb{Z}G$ for a finite group $G:=B$ with duality $g^\ast=g^{-1}$.
\end{definition}

Let $B=\sqcup_jB_j$ be a partition of $B$.  Denote by $R_j$, the subset of $R$ additively generated by all elements of $B_j$.  We say that a fusion ring $(R,B)$ is \emph{graded} by a finite group $G$, or is \emph{$G$-graded}, if there is a partition $B=\sqcup_{g\in G}B_g$ such that each $B_g$ is nonempty and $xy\in B_{gh}$ for any $x\in B_g$ and $y\in B_h$.  Each fusion ring $(R,B)$ has a \emph{universal grading group} $U(R)$ such that if $R$ is $H$-graded for any other finite group $H$, then there exists a surjective group homomorphism $\pi:G\to H$ \cite[Corollary 3.6.6]{tcat}.  The trivial component of the universal grading is denoted $R_\mathrm{ad}$, the \emph{adjoint subring} of $R$.  It is well-known that this subring is multiplicatively generated by $xx^\ast$ over all $x\in B$ \cite[Section 3.6]{tcat}.


\subsection{Fusion categories}

A (skeletal) \emph{fusion category} is roughly a fusion ring along with solutions to the pentagon equations \cite[Equation 2.2]{tcat} which describe possible associative structures for the categorified monoidal product.  In particular, solutions to the pentagon equations for a fusion ring $(R,B)$ prove the existence of a skeletal semisimple rigid monoidal category $\mathcal{C}$ with finitely many simple objects corresponding to the elements of $B$ whose Grothendieck ring is $R$.  In this case we say $\mathcal{C}$ is a \emph{categorification} of $(R,B)$ (see \cite[Section 4.10]{tcat}).  Any category is equivalent to a skeletal one with the axiom of choice, so it is possible to treat the combinatorial/algebraic definition of a fusion category on equal footing with the poignantly categorical definition \cite[Definition 4.1.1]{tcat}.  All the notation, definitions, and concepts of fusion rings from Section \ref{sec:prem} can then be applied to any categorification of a fusion ring.

\begin{example}\label{ex:p}
The simplest examples are the \emph{pointed fusion categories}, i.e.\ those fusion categories $\mathcal{C}$ with $\mathcal{C}=\mathcal{C}_\mathrm{pt}$.  The Grothendieck ring of such a category $\mathcal{C}$ is $\mathbb{Z}G$ for a finite group $G$.  Any $\omega:G\times G\times G\to\mathbb{C}$ which satisfies the pentagon equations lies in $\omega\in H^3(G,\mathbb{C}^\times)$ and vice-versa, and thus any pointed fusion category, or equivalently any categorification of $\mathbb{Z}G$ for a finite group $G$, is equivalent to $\mathrm{Vec}_G^\omega$, the category of $G$-graded vector spaces with associativity defined by some $\omega\in H^3(G,\mathbb{C}^\times)$.  This pairing is not a one-to-one correspondence.  Indeed, tensor equivalence classes of $\mathrm{Vec}_G^\omega$ are parameterized by orbits $H^3(G,\mathbb{C}^\times)/\mathrm{Out}(G)$ where $\mathrm{Out}(G)$ is the set of outer automorphisms of $G$ \cite[Proposition 2.6.1]{tcat}.
\end{example}

\par With a robust set of examples of fusion categories coming from finite groups and representation theory, one can create even more with basic constructions such as products \cite[Section 4.6]{tcat} denoted by $\boxtimes$, graded extensions by finite groups \cite{MR2677836}, and Galois conjugacy \cite{davidovich2013arithmetic}.

\par One additional categorical structure that will play a lesser role in this manuscipt is that of a \emph{pivotal structure} which defines a notion of categorical trace of endomorphisms in a fusion category \cite[Section 4.7]{tcat}.  If the categorical traces defined by a pivotal structure $\varphi$ on a fusion category $\mathcal{C}$ of the identity map on all simple objects $X$ and $X^\ast$ in $\mathcal{C}$ are equal, we denote these $\dim(X)$ and say $\varphi$ is a \emph{spherical structure} and that $\mathcal{C}$, equipped with $\varphi$, is a \emph{spherical fusion category}.  It is currently an open problem to determine whether or not every fusion category possesses a spherical structure, although it is known that the class of \emph{pseudounitary} fusion categories possess a canonical spherical structure \cite[Proposition 9.5.1]{tcat} such that $\dim(X)=\mathrm{FPdim}(X)$ for all objects $X$.  All integral and weakly integral fusion categories are pseudounitary, and we prove in Lemma \ref{cor:1} that every near-integral fusion category has a canonical spherical structure as well, being Galois conjugate to a pseudounitary fusion category.  The benefit of a spherical structure/pseudounitarity is access to a wider array of number-theoretical tools when studying fusion categories.  It is important to note that there exist fusion rings of very small rank which possess only non-pseudounitary categorifications \cite{MR4327964}.


\subsection{Braided fusion categories}\label{subsec:braid}

A braiding on a fusion category $\mathcal{C}$ is a collection of natural isomorphisms $\sigma:X\otimes Y\to Y\otimes X$ for all $X,Y\in\mathcal{O}(\mathcal{C})$ satisying the hexagon equations \cite[Definition 8.1.1]{tcat}; in particular, if $\sigma_{X,Y}$ is illustrated as crossing strands then the hexagon equations state that morphisms involving the $\sigma$'s satisfy the Reidemeister III relation.  Tensor functors between braided fusion categories are braided if they respect both braidings in a natural way \cite[Definition 8.1.7]{tcat} but unlike braidings on categories, this is a property of a tensor functor and not an additional structure.  In what follows it will be indicated clearly whether we are referring to categories being equivalent as fusion categories or as braided fusion categories as braidings on fusion categories are in general not unique.  For example, if $\{\sigma_{X,Y}\}_{X,Y}$ describes a braiding on $\mathcal{C}$, then the \emph{reverse} braided category $\mathcal{C}^\mathrm{rev}$ is a braided fusion category with underlying fusion category $\mathcal{C}$ and braiding $\{\sigma_{Y,X}^{-1}\}_{X,Y}$ which may or may not be braided equivalent to $\mathcal{C}$.

\begin{example}\label{ex:braidgroup}
A quadratic form on a finite abelian group $G$ is a map $q:G\to\mathbb{C}^\times$ such that $q(g)=q(g^{-1})$ and the symmetric function $b(g,h):=q(gh)q(g)^{-1}q(h)^{-1}$ is a bicharacter of $G$.  The pairing of $G$ and a quadratic form $q$ is known as a \emph{pre-metric group}.  One may verify that given a braiding $\{\sigma_{g,h}\}_{g,h\in G}$ on a pointed braided fusion category $\mathrm{Vec}_G^\omega$, the map $q:G\to\mathbb{C}^\times$ given by $q(g)=\sigma_{g,g}\in\mathrm{Aut}(g^2)\cong\mathbb{C}^\times$ is a quadratic form.  Furthermore, there is a unique braided fusion category, up to braided equivalence, corresponding to each pre-metric group in this way \cite[Theorem 8.4.12]{tcat}.  As such, we denote pointed braided fusion categories by $\mathcal{C}(G,q)$ for a finite group $G$ and quadratic form $q:G\to\mathbb{C}^\times$.

\par For an elementary demonstration of this, consider braidings on the pointed fusion categories $\mathrm{Vec}_{C_p}^\omega$ where $p\in\mathbb{Z}_{\geq3}$ is an odd prime and $\omega\in H^3(C_p,\mathbb{C}^\times)$.  By the classification of pre-metric groups, there are exactly $3$ braided fusion categories $\mathcal{C}(C_p,q)$ up to braided equivalence; the distinct quadratic forms are either trivial, or take values in $\zeta_p^a$ where $a$ is either a quadratic residue or quadratic non-residue modulo $p$ and $\zeta_p=\exp(2\pi i/p)$.
It is tempting to assume these correspond to various equivalence classes of $\mathrm{Vec}_{C_p}^\omega$, which is misguided.   Specifically, $\mathrm{Vec}_{C_p}$ (trivial $\omega$) has all three inequivalent braidings corresponding to $\mathcal{C}(C_p,q)$ while $\mathrm{Vec}_{C_p}^\omega$ for nontrivial $\omega$ have none.  In fact, $\mathrm{Vec}_G^\omega$ for a group of odd order is braided if and only if $\omega$ is trivial.   Compare this with $p=2$.  There are exactly $4$ braided fusion categories $\mathcal{C}(C_2,q)$ up to equivalence; the distinct quadratic forms take a nontrivial value which is a fourth root of unity.   Underlying these braided fusion categories are $\mathrm{Vec}_{C_2}$ with exactly two inequivalent braidings $\mathcal{C}(C_2,q)$ with $q$ taking values in the second roots of unity and $\mathrm{Vec}_{C_2}^\omega$ for nontrivial $\omega$ with exactly two inequivalent braidings $\mathcal{C}(C_2,q)$ with $q$ taking values in the primitive fourth roots of unity.
\end{example}

\par Two objects $X$ and $Y$ in a braided fusion category $\mathcal{C}$ are said to \emph{centralize} one another if $\sigma_{Y,X}\sigma_{X,Y}=\mathrm{id}_{X,Y}$.  If $\mathcal{D}\subset\mathcal{C}$ is a fusion subcategory of a braided fusion category, the subset $C_\mathcal{C}(\mathcal{D})$ of $X\in\mathcal{C}$ such that $X$ and $Y$ centralize one another for all $Y\in\mathcal{D}$ forms a full fusion subcategory of $\mathcal{C}$ called the \emph{centralizer} of $\mathcal{D}$ in $\mathcal{C}$.  An extreme version of this is the \emph{symmetric center} $C_\mathcal{C}(\mathcal{C})$ of a braided fusion category $\mathcal{C}$.  When $C_\mathcal{C}(\mathcal{C})$ is trivial, we say $\mathcal{C}$ is \emph{nondegenerately braided}.

\begin{example}\label{exsynm}
Braided fusion categories such that $C_\mathcal{C}(\mathcal{C})=\mathcal{C}$ are known as \emph{symmetrically braided fusion categories} or often \emph{symmetric fusion categories}.  It is known that every symmetric fusion category is braided equivalent to a category of the form $\mathrm{Rep}(G,\nu)$ where $G$ is a finite group and $\nu$ is a central element of $G$ of order at most $2$ (see \cite[Theorem 9.9.26]{tcat} and references within).  When $\nu=e$ is trivial, we say $\mathrm{Rep}(G)=\mathrm{Rep}(G,e)$ is \emph{Tannakian}, and when $\nu$ is nontrivial we say $\mathrm{Rep}(G,\nu)$ is \emph{super Tannakian}.  One should be very careful when dealing with equivalence classes of symmetrically braided fusion categories.  For example, the fusion category underlying the symmetrically braided fusion category $\mathrm{Rep}(S_3)$ has three distinct braidings: one is symmetrically braided while the other two are not (see Example \ref{braidfin} below).
\end{example}

\par Braided fusion categories equipped with spherical structures are called \emph{premodular fusion categories} and when the braiding is nondegenerate, \emph{modular fusion categories} \cite[Sections 8.13--8.14]{tcat}.  Premodular fusion categories benefit from numerical results such as the balancing equation \cite[Proposition 8.13.8]{tcat} while modular fusion categories benefit from the related representation theory of the modular group $\mathrm{SL}(2,\mathbb{Z})$ \cite[Section 8.16]{tcat}.  In particular, there are two $\mathrm{rank}(\mathcal{C})\times\mathrm{rank}(\mathcal{C})$ matrices $S=[S_{X,Y}]_{X,Y}$ and $T=\mathrm{diagonal}(\theta_X:X\in\mathcal{O}(\mathcal{C}))$ (also called \emph{twists} of simple objects) attached to a modular fusion category known as \emph{modular data} which has been used to productively to classify modular fusion categories of small rank (see \cite{ng2023classification} and references within).  The modular data of a modular fusion category determines the underlying fusion ring via the Verlinde formula \cite[Corollary 8.14.4]{tcat}, but does not determine a modular fusion category up to equivalence \cite{MR4254068}.

\begin{example}\label{exquant}
Many examples of premodular and modular fusion categories come from the representation theory of quantum groups at roots of unity which can be studied with an elementary understanding of Lie theory \cite{MR4079742}.  To each complex finite-dimensional simple Lie algebra $X$, indicated by its Dynkin label, and root of unit $q$ such that $q^2$ is a primitive $\ell$th root of unity with $\ell\in\mathbb{Z}$ which is greater than or equal to the dual Coxeter number of $\mathfrak{g}$, there exists a premodular fusion category $\mathcal{C}(X,\ell,q)$.  Many of these premodular categories and their subcategories are characterized by their fusion rules.  For example for any $N\in\mathbb{Z}_{\geq1}$, any spherical fusion category with the fusion rules of $\mathcal{C}(A_N,\ell,q)$ is tensor equivalent to $\mathcal{C}(A_N,\ell,q)$ for some $\ell$ and $q$ up to a twisting of the associators \cite{MR1237835}.  Similarly \cite[Theorem A.3]{MR4486913}, any spherical fusion category with the fusion rules of $\mathcal{C}(A_1,\ell,q)_\mathrm{ad}$ is tensor equivalent to $\mathcal{C}(A_1,\ell,q)_\mathrm{ad}$ for some choice of $\ell$ and $q$.
\end{example}

If $\mathcal{C}$ is a (spherical) fusion category then one can construct a (premodular) braided fusion category $\mathcal{Z}(\mathcal{C})$, the \emph{double} of $\mathcal{C}$, whose simple objects are sums of simple objects of $\mathcal{C}$ equipped with braiding isomorphisms \cite[Definition 7.13.1]{tcat} with all other objects of $\mathcal{C}$.  It is well-known that $\mathcal{Z}(\mathcal{C})$ is furthermore nondegenerately braided, i.e.\ modular in the presence of a spherical structure.  A principal tool that we will use is induction-restriction to the double \cite[Section 9.2]{tcat}.  The obvious forgetful functor $F:\mathcal{Z}(\mathcal{C})\to\mathcal{C}$ has an adjoint functor we call induction $I:\mathcal{C}\to\mathcal{Z}(\mathcal{C})$.  The structure of this functor is roughly controlled by the adjoint condition, and various other results related to the modular data of the double.  For example, it is well-known \cite[Proposition 9.2.2]{tcat} that for all $X\in\mathcal{O}(\mathcal{C})$,
\begin{equation}
F(I(X))=\bigoplus_{Y\in\mathcal{O}(\mathcal{C})}Y\otimes X\otimes Y^\ast.
\end{equation}
Some simple objects of $\mathcal{Z}(\mathcal{C})$ are determined by the \emph{formal codegrees} of the underlying fusion rules \cite{codegrees}.  Denote the set of finite-dimensional complex irreducible representations of the Grothendieck ring of $\mathcal{C}$ by $\mathrm{Irr}(\mathcal{C})$.  If $f_\varphi$ is a formal codegree of a spherical fusion category $\mathcal{C}$ corresponding to some $\varphi\in\mathrm{Irr}(\mathcal{C})$, there exists a simple object $A_\varphi\in\mathcal{O}(\mathcal{Z}(\mathcal{C}))$ such that $\dim(A_f)=\dim(\mathcal{C})/f_\varphi$ and $[\mathbbm{1},A_\varphi]=\dim(\varphi)$.  These simple objects constitute all simple summands of $I(\mathbbm{1})$ and all have trivial twists \cite[Proposition 3.11]{MR4655273}.  

\par Braidings on an arbitrary fusion category $\mathcal{C}$ can be described by the double construction.  Specifically, a braiding on $\mathcal{C}$ corresponds to a fusion subcategory $\mathcal{D}\subset\mathcal{Z}(\mathcal{C})$ with $\mathrm{FPdim}(\mathcal{D})=\mathrm{FPdim}(\mathcal{C})$ and $\mathcal{O}(\mathcal{D})\cap I(\mathbbm{1})=\{\mathbbm{1}\}$ \cite[Theorem 3.2]{MR3943750}.  We include the following example for the reader to illustrate that this correspondence may or may not be one-to-one.

\begin{example}\label{braidfin}
Consider all possible braidings on the rank $3$ spherical fusion category $\mathrm{Rep}(S_3)$, of finite-dimensional complex representations of the symmetric group $S_3$.  The formal codegrees \cite{codegrees} of $\mathrm{Rep}(S_3)$ are $6$, $3$, and $2$, and so $I(\mathbbm{1})\subset\mathcal{Z}(\mathrm{Rep}(S_3))$ has simple summands of dimensions $1$ (the tensor unit), $2$, and $3$.  The $S$-matrix of the double is
\begin{equation}
\left[\begin{array}{cccccccc}
1 & 1 & 2 & 2 & 2 & 2 & 3 & 3 \\
1 & 1 & 2 & 2 & 2 & 2 & -3 & -3 \\
2 & 2 & -2 & 4 & -2 & -2 & 0 & 0 \\
2 & 2 & 4 & -2 & -2 & -2 & 0 & 0 \\
2 & 2 & -2 & -2 & 4 & -2 & 0 & 0 \\
2 & 2 & -2 & -2 & -2 & 4 & 0 & 0 \\
3 & -3 & 0 & 0 & 0 & 0 & 3 & -3 \\
3 & -3 & 0 & 0 & 0 & 0 & -3 & 3
\end{array}\right]
\end{equation}
and $T=\mathrm{diagonal}(1,1,\omega^2,\omega,1,1,1,-1)$ where $\omega$ is a primitive third root of unity.  One may verify with the Verlinde formula \cite[Corollary 8.14.4]{tcat} that all simple objects of dimension $2$ $\otimes$-generate fusion subcategories $\mathcal{D}\subset\mathcal{Z}(\mathrm{Rep}(S_3))$ with $\mathrm{FPdim}(\mathcal{D})=6$.  Therefore three of these simple objects, of twists $1,\omega,\omega^2$ correspond to three distinct braidings on $\mathrm{Rep}(S_3)$, distinguished by twists, while the fourth simple object of dimension $2$ and trivial twist appears as a summand of $I(\mathbbm{1})$.  We denote these braided fusion categories $\mathrm{Rep}(S_3)^\omega$ for third roots of unity $\omega$ with $\mathrm{Rep}(S_3)^1=\mathrm{Rep}(S_3)$.  In this case the inequivalent braidings on the fusion category $\mathrm{Rep}(S_3)$ are truly in bijection with the fusion subcategories from \cite[Theorem 3.2]{MR3943750}.

\par Alternately, consider all possible braidings on the rank $4$ fusion category $\mathrm{Rep}(A_4)$.  The formal codegrees of $\mathrm{Rep}(A_4)$ are $12$, $4$, $3$ and $3$, and so $I(\mathbbm{1})\subset\mathcal{Z}(\mathrm{Rep}(A_4))$ has simple summands of dimensions $1$ (the tensor unit), $3$, $4$ and $4$.  With $\alpha:=-4\omega^2$, the $S$-matrix of the double is
\begin{equation}
\left[\begin{array}{cccccccccccccc}
1 & 1 & 1 & 3 & 4 & 4 & 4 & 3 & 3 & 3 & 3 & 4 & 4 & 4 \\
1 & 1 & 1 & 3 & 4\omega & 4\omega & 4\omega & 3 & 3 & 3 & 3 & \alpha & \alpha & \alpha \\
1 & 1 & 1 & 3 & \alpha & \alpha & \alpha & 3 & 3 & 3 & 3 & 4\omega & 4\omega & 4\omega \\
3 & 3 & 3 & 9 & 0 & 0 & 0 & -3 & -3 & -3 & -3 & 0 & 0 & 0 \\
4 & 4\omega & \alpha & 0 & 4 & 4\omega & \alpha & 0 & 0 & 0 & 0 & 4 & 4\omega & 4 \\
4 & 4\omega & \alpha & 0 & 4\omega & \alpha & 4 & 0 & 0 & 0 & 0 & 4 & 4 & 4\omega \\
4 & 4\omega & \alpha & 0 & \alpha & 4 & 4\omega & 0 & 0 & 0 & 0 & 4\omega & 4 & 4 \\
3 & 3 & 3 & -3 & 0 & 0 & 0 & 9 & -3 & -3 & -3 & 0 & 0 & 0 \\
3 & 3 & 3 & -3 & 0 & 0 & 0 & -3 & -3 & -3 & 9 & 0 & 0 & 0 \\
3 & 3 & 3 & -3 & 0 & 0 & 0 & -3 & -3 & 9 & -3 & 0 & 0 & 0 \\
3 & 3 & 3 & -3 & 0 & 0 & 0 & -3 & 9 & -3 & -3 & 0 & 0 & 0 \\
4 & \alpha & 4\omega & 3 & 4 & \alpha & 4\omega & 0 & 0 & 0 & 0 & 4 & \alpha & 4\omega \\
4 & \alpha & 4\omega & 3 & 4\omega & 4 & \alpha & 0 & 0 & 0 & 0 & \alpha & 4\omega & 4 \\
4 & \alpha & 4\omega & 3 & \alpha & 4\omega & 4 & 0 & 0 & 0 & 0 & 4\omega & 4 & \alpha
\end{array}\right]
\end{equation}
and $T=\mathrm{diagonal}(1,1,1,1,1,\omega^2,\omega,1,-1,1,-1,1,\omega,\omega^2)$.  One may verify with the Verlinde formula that all simple objects of dimension $3$ $\otimes$-generate fusion subcategories $\mathcal{D}\subset\mathcal{Z}(\mathrm{Rep}(A_4))$ with $\mathrm{FPdim}(\mathcal{D})=12$.  Four of these subcategories intersect $I(\mathbbm{1})$ trivially; one is Tannakian while the other three are braided equivalent.  We denote these two braided fusion categories $\mathrm{Rep}(A_4)^\epsilon$ for second roots of unity $\epsilon$ with $\mathrm{Rep}(A_4)^1=\mathrm{Rep}(A_4)$.  In this case the number of inequivalent braidings on the fusion category $\mathrm{Rep}(A_4)$ are fewer in number than the fusion subcategories from \cite[Theorem 3.2]{MR3943750}.
\end{example}


\subsection{Equivariantization and de-equivariantization}\label{sec:deeq}

Given a fusion category $\mathcal{C}$ and a finite group $G$, there are two related constructions to produce new fusion categories combining these objects.  One was touched on in Section \ref{ssec:ring} which is to create a $G$-graded extension $\mathcal{D}$ such that $\mathcal{D}_\mathrm{ad}=\mathcal{C}$.  The categorical obstructions to such an extension existing are outlined in \cite{ENO}.  Alternatively, if $G$ acts on $\mathcal{C}$ by tensor autoequivalences \cite[Definition 2.7.1]{tcat}, one can construct the fusion category $\mathcal{C}^G$ of $G$-equivariant objects in $\mathcal{C}$, or the \emph{equivariantization of $\mathcal{C}$ by $G$}.  The simple objects of $\mathcal{C}^G$ are roughly orbits of the $G$-action on $\mathcal{O}(\mathcal{C})$ along with representation-theoretic data, depending on the action of $G$ by tensor autoequivalences, related to the fixed-points of this action \cite[Remark 4.15.8]{tcat}.  In any case, $\mathrm{FPdim}(\mathcal{C}^G)=|G|\mathrm{FPdim}(\mathcal{C})$.  Conversely, if $\mathcal{C}$ is a fusion category and $\mathrm{Rep}(G)\to\mathcal{Z}(\mathcal{C})$ is a braided tensor functor such that its composition with the forgetful functor $F:\mathcal{Z}(\mathcal{C})\to\mathcal{C}$ is fully faithful, then there exists a fusion category $\mathcal{C}_G$, the \emph{de-equivariantization of $\mathcal{C}$ by $G$} and an action of $G$ on $\mathcal{C}_G$ such that $(\mathcal{C}^G)_G\simeq(\mathcal{C}_G)^G\simeq\mathcal{C}$ \cite[Theorem 8.23.3]{tcat}.  If $G$ in addition acts by braided autoequivalences, one can upgrade the equivariantization and de-equivariantization constructions to keep track of this additional structure as well.  We refer the reader to \cite[Section 4.2]{DGNO} for further information and details.

\par In what follows, we will mainly use equivariantization and de-equivariantization in the braided setting where $\mathcal{E}\simeq\mathrm{Rep}(G)\subseteq\mathcal{C}$ is a Tannakian fusion subcategory (see Section \ref{subsec:braid}).  One can then identify the de-equivariantization $\mathcal{C}_G$ with $\mathcal{C}_A$, the category of $A$-module objects in $\mathcal{C}$ where $A$ is the regular algebra of $\mathrm{Rep}(G)$.  In this case, $\mathcal{C}_G$ contains a braided fusion subcategory $\mathcal{C}_A^0\simeq(C_\mathcal{C}(\mathcal{E}))_A$.  Thus when $\mathcal{E}\subset C_\mathcal{C}(\mathcal{C})$ lies in the symmetric center of $\mathcal{C}$, the entire category of $A$-modules $\mathcal{C}_A=\mathcal{C}_A^0$ is braided.  Furthermore, $\mathcal{C}_A^0$ is nondegenerately braided if and only if $\mathcal{C}$ is.  The following example demonstrates how quickly the number of braided equivalence classes of braided fusion categories grows by using the equivariantization construction.

\begin{example}\label{ex:12} There are six equivalence classes of pointed fusion categories $\mathcal{C}$ of rank $6$, corresponding to products of the two inequivalent pointed fusion categories of rank $2$, and the three inequivalent pointed fusion categories of rank $3$ (see Example \ref{ex:p}).  Any tensor autoequivalence which acts nontrivially on the isomorphism classes of invertible objects must permute the elements of orders $3$ and $6$, while the invertible objects of orders $1$ and $2$ are fixed, so there is a unique $\tau\in\mathrm{Aut}_\otimes(\mathcal{C})$ of order $2$ up to isomorphism.  The only additional data of a nontrivial $C_2$-action on $\mathcal{C}$ by tensor autoequivalences is an isomorphism of tensor autoequivalences $\tau^2\cong\mathrm{id}_\mathcal{C}$, of which there are two, yielding twelve equivalence classes of fusion categories $\mathcal{C}^{C_2}$ whose fusion rules are those of the character rings of two of the groups of order $12$: $D_6\cong C_2\times S_3$ or $\mathrm{Dic}_3$ \cite{MR3059899}.

\par But only four of these fusion categories are braided (see Example \ref{ex:braidgroup}).  Indeed, a quadratic form on $C_2\times C_3$ is determined by its values on each factor; there are four quadratic forms on $C_2$ whose unique nontrivial values are the fourth roots of unity, while a quadratic form on $C_3$ is determined by the choice of a single third root of unity, the other nontrivial value is the same by duality.  We will denote the associated premodular fusion categories by $\mathcal{C}(C_6,q_{x,\omega})$ where $x$ is any fourth root of unity and $\omega$ is any third root of unity.  These twelve quadratic forms correspond to only two fusion categories up to tensor equivalence, distinguished by whether the quadratic form on the $C_2$ factor is nondegenerate (see Example \ref{ex:braidgroup}).  As noted above, there are two distinct $C_2$-actions on $\mathcal{C}(C_6,q_{x,\omega})$ by tensor autoequivalences, both of which are braided, giving four inequivalent fusion categories by this construction possessing braidings, but $24$ braided fusion categories up to braided equivalence by starting with various quadratic forms on $C_2\times C_3$.  The twists defined by the canonical spherical structure of these premodular fusion categories distinguish these categories, displayed in Figure \ref{fig:dim12}.  We leave it as an exercise to the reader to repeat the above arguments for the trivial $C_2$-action on $\mathcal{C}(C_6,q_{x,\omega})$ which produces only a portion of the the pointed braided fusion categories of Frobenius-Perron dimension $12$; the remainder can be identified directly from quadratic forms on abelian groups of order $12$.

\begin{figure}[H]
\centering
\begin{align*}
\begin{array}{|c|c|c|c|}
\hline \mathcal{C}(C_6,q_{x,\omega})^{C_2} & \theta & \# \\\hline\hline
\mathrm{Rep}(C_2,\nu)\boxtimes\mathrm{Rep}(S_3)^\omega &  1,1,\nu,\nu,\omega,\nu\omega  & 6 \\
\mathrm{Rep}(\mathrm{Dic}_3,\nu)^\omega & 1,1,\nu,\nu,\nu\omega,\nu\omega & 6 \\\hline\hline
\mathcal{C}(C_2,\pm i)\boxtimes\mathrm{Rep}(S_3)^\omega  & 1,1,\pm i,\pm i,\omega,\pm i\omega & 6 \\
\mathcal{C}(C_6,q_{\pm i,\omega})^{C_2} & 1,1,\pm i,\pm i,\pm i\omega,\pm i\omega & 6\\\hline
\end{array}
\end{align*}
    \caption{The $24$ braided categorifications, up to braided equivalence, of the character rings of $G:=C_2\times S_3$ and $H:=\mathrm{Dic}_3$, separated by the underlying fusion category being $\mathrm{Rep}(G)$ or $\mathrm{Rep}(H)$ (top), or not (bottom).}%
    \label{fig:dim12}%
\end{figure}
\end{example}

\begin{example}\label{ex:12222}
Note that there are exactly two inequivalent pointed fusion categories of rank $9$ which admit braidings, i.e.\ $\mathrm{Vec}_{C_3^2}$ and $\mathrm{Vec}_{C_9}$, with trivial associators, and a unique fixed-point free $C_2$-action on each by tensor autoequivalences via $X\mapsto X^\ast$ for all simple $X$.  This produces the fusion categories $(\mathrm{Vec}_{C_3^2})^{C_2}\simeq\mathrm{Rep}(C_3\rtimes S_3)$ and $(\mathrm{Vec}_{C_9})^{C_2}\simeq\mathrm{Rep}(D_9)$, respectively, via $C_2$-equivariantization.  Each of $\mathrm{Vec}_{C_3^2}$ and $\mathrm{Vec}_{C_9}$ admits five inequivalent braidings.  In other words, there are $10$ inequivalent pre-metric groups over all $\mathcal{C}(C_3^2,q)$ and $\mathcal{C}(C_9,q)$.  The former are all distinct products over the pre-metric groups $\mathcal{C}(C_3,q)$ and the latter $q:C_9\to\mathbb{C}^\times$ either take values in third roots of unity ($3$ braided inequivalent classes) or ninth roots of unity ($2$ braided inequivalent classes).
\end{example}


\section{Near-integral fusion}\label{sec:nearfusion}

\subsection{Definition and examples}\label{subsec:def}

We say that a fusion ring $(R,B)$ is \emph{near-integral} if there exists a proper fusion subring of maximal rank, i.e.\ a fusion ring $(S,C)$ with $C\subset B$ such that $|C|+1=|B|$.  Let $\rho$ be the unique element in $B\setminus C$.  It is clear that $\rho=\rho^\ast$ since $C$ is closed under duality.  As $S$ is a fusion subring, for any $x,y\in C$,
\begin{equation}
0=c_{x^\ast,y}^\rho=c_{y,\rho}^x
\end{equation}
by \cite[Proposition 3.1.6]{tcat}. Thus for any $x\in C$, $c_{x,\rho}^\rho=\mathrm{FPdim}(x)$.  Therefore $S$ is an integral fusion ring, and the only fusion rule that is undetermined is $\kappa:=c_{\rho,\rho}^\rho\in\mathbb{Z}_{\geq0}$.  Moreover, $R$ is commutative if and only if $S$ is commutative.  We denote such a fusion ring $R(S,\kappa)$ and the distinguished nontrivial basis element will be denoted by $\rho$.  We fix the notation $d_+:=\mathrm{FPdim}(\rho)$ and $N:=\mathrm{FPdim}(S)$.  It is clear that $d_+$ satisfies
\begin{equation}\label{eq:def}
x^2-\kappa x-N=0.
\end{equation}
Note also that the product of the solutions of Equation (\ref{eq:def}) is $-N$ so the solution which is not $d_+=\mathrm{FPdim}(\rho)$ is negative; we denote this root $d_-$.

\begin{example}
The near-integral fusion rings $R(\mathbb{Z}G,\kappa)$ for a finite group $G$ and their categorifications have been studied under the name \emph{near-group} fusion rings, initiated in \cite{MR1997336}, and were inspired by the rings $R(\mathbb{Z}G,0)$ which are known as the \emph{Tambara-Yamagami} fusion rings after seminal work in \cite{MR1659954}.
\end{example}

\begin{lemma}\label{bund}
Let $R(S,\kappa)$ be a near-integral fusion ring.  If $d_+\in\mathbb{Z}$, then $\kappa<N$.  Moreover if $\kappa\geq N$, then $d_+$ is irrational.
\end{lemma}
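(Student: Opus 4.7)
The plan is to work directly from the quadratic Equation (\ref{eq:def}) satisfied by $d_+$, exploiting the fact that the two roots $d_\pm$ have integer sum $\kappa$ and integer product $-N$. Specifically, rearranging Equation (\ref{eq:def}) gives
\begin{equation*}
N = d_+(d_+ - \kappa),
\end{equation*}
so regardless of whether $d_+$ is an integer, the quantity $t := d_+ - \kappa$ equals $N/d_+$.

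Assume $d_+ \in \mathbb{Z}$. Then $t = d_+ - \kappa$ is an integer, and since $N > 0$ and $d_+ > 0$, it is a positive integer, i.e.\ $t \geq 1$. This yields the two inequalities $\kappa = d_+ - t \leq d_+ - 1$ and $N = d_+ t \geq d_+$, which chain together to give
\begin{equation*}
\kappa \leq d_+ - 1 < d_+ \leq N,
\end{equation*}
proving the first assertion.

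For the moreover, I would argue by contrapositive together with the observation that $d_+$ is an algebraic integer. Indeed $d_+$ is a root of the monic polynomial $x^2 - \kappa x - N \in \mathbb{Z}[x]$, so if $d_+$ were rational it would necessarily lie in $\mathbb{Z}$; then the first part would force $\kappa < N$, contradicting $\kappa \geq N$. Hence $d_+$ is irrational whenever $\kappa \geq N$.

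I do not anticipate a real obstacle here: the whole argument is a one-line consequence of the factorization $N = d_+(d_+ - \kappa)$, and the only thing to be careful about is noting that $t = N/d_+$ is positive (which is immediate from positivity of $N$ and $d_+$) so that the integrality of $d_+$ upgrades it to $t \geq 1$.
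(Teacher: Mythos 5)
Your proof is correct and follows essentially the same route as the paper's: both rest on the factorization $N = d_+(d_+-\kappa)$, deduce $d_+ - \kappa \geq 1$ when $d_+\in\mathbb{Z}$, and conclude $\kappa < d_+ \leq N$. Your explicit remark that $d_+$ is an algebraic integer (so rational implies integral) fills in the step the paper leaves implicit for the "moreover" clause, but the argument is the same.
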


\begin{proof}
Equation (\ref{eq:def}) states that $N=d_+(d_+-\kappa)$.  If $d_+$ is an integer, then $d_+>\kappa$ since $N\in\mathbb{Z}_{\geq1}$.  Moreover $\kappa<d_+\leq N$.
\end{proof}

\subsection{Representation theory}\label{subsec:rep}

\begin{proposition}\label{prop:char}
Let $R(S,\kappa)$ be a near-integral fusion ring.  There exist exactly two ring homomorphisms $\chi_\pm:R(S,\kappa)\to\mathbb{C}$ such that $\chi_\pm(\rho)\neq0$.  In particular, $\left.\chi_\pm\right|_S=\mathrm{FPdim}$ and $\chi_\pm(\rho)=d_\pm$.
\end{proposition}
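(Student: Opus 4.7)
The plan is to use the very restrictive multiplication of $\rho$ with elements of $S$ to force the values of any such $\chi$ on $S$, and then use the single ambiguous fusion rule $\rho^2$ to pin down $\chi(\rho)$ as a root of the quadratic in Equation~(\ref{eq:def}).

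First I would record the two identities that drive the whole argument. From the computation $c_{y,\rho}^x = 0$ already derived before the definition, combined with $c_{x,\rho}^\rho = \mathrm{FPdim}(x)$, one obtains for every $x \in C$
\begin{equation*}
x \cdot \rho \;=\; \mathrm{FPdim}(x)\,\rho.
\end{equation*}
Next, using Frobenius reciprocity $c_{\rho,\rho}^x = c_{x^\ast,\rho}^{\rho}$ and the fact that $C$ is closed under duality, the coefficient of $x \in C$ in $\rho^2$ equals $\mathrm{FPdim}(x^\ast) = \mathrm{FPdim}(x)$. Therefore
\begin{equation*}
\rho^2 \;=\; \kappa\,\rho \;+\; \sum_{x \in C} \mathrm{FPdim}(x)\, x,
\end{equation*}
where the second sum has total FP-dimension $N$ by the definition of $N=\mathrm{FPdim}(S)=\sum_{x\in C}\mathrm{FPdim}(x)^2$.

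Now let $\chi:R(S,\kappa)\to\mathbb{C}$ be any ring homomorphism with $\chi(\rho)\neq 0$. Applying $\chi$ to the first identity above gives $\chi(x)\chi(\rho) = \mathrm{FPdim}(x)\chi(\rho)$, so cancelling $\chi(\rho)$ yields $\chi(x)=\mathrm{FPdim}(x)$ for every $x\in C$. In particular $\chi|_S = \mathrm{FPdim}$ is forced. Applying $\chi$ to the formula for $\rho^2$ then gives
\begin{equation*}
\chi(\rho)^2 \;=\; \kappa\,\chi(\rho) \;+\; \sum_{x \in C} \mathrm{FPdim}(x)^2 \;=\; \kappa\,\chi(\rho) + N,
\end{equation*}
so $\chi(\rho)$ is a root of $t^2-\kappa t-N=0$. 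By the paragraph following Equation~(\ref{eq:def}) this equation has exactly the two distinct real roots $d_+>0$ and $d_-<0$, so $\chi(\rho)\in\{d_+,d_-\}$, giving at most two candidates $\chi_\pm$.

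The last step is the converse: I would verify that the assignment $\chi_\pm(x):=\mathrm{FPdim}(x)$ for $x\in C$ together with $\chi_\pm(\rho):=d_\pm$ does extend to a ring homomorphism. Multiplicativity on products inside $S$ follows because $\mathrm{FPdim}$ is itself a homomorphism on the integral fusion subring $S$; multiplicativity on $x\cdot\rho$ is immediate from $x\rho=\mathrm{FPdim}(x)\rho$; and multiplicativity on $\rho\cdot\rho$ is precisely the statement that $d_\pm$ solves Equation~(\ref{eq:def}). Since these are all the structure constants, $\chi_\pm$ is a well-defined ring homomorphism. The only subtlety I foresee is making the Frobenius reciprocity step for $c_{\rho,\rho}^x$ crisp enough to invoke without fuss, but this is standard from \cite[Proposition 3.1.6]{tcat} that was already cited just above the statement.
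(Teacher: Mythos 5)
Your proof is correct and follows essentially the same route as the paper: force $\chi|_S=\mathrm{FPdim}$ from $x\rho=\mathrm{FPdim}(x)\rho$, then apply $\chi$ to $\rho^2=\kappa\rho+\sum_{x\in C}\mathrm{FPdim}(x)x$ to see that $\chi(\rho)$ solves Equation (\ref{eq:def}). Your final step, explicitly checking that both candidate assignments are multiplicative on all structure constants, is a small completeness improvement over the paper's proof, which only argues the ``at most two'' direction and leaves the existence of $\chi_-$ implicit.
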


\begin{proof}
Let $\chi:R\to\mathbb{C}$ be any ring homomorphism such that $\chi(\rho)\neq0$.  For any $x\in C$, $\rho x=\mathrm{FPdim}(x)\rho$.  Therefore $\chi(\rho)\neq0$ implies $\chi(x)=\mathrm{FPdim}(x)$ for all $x\in C$.  Then we compute
\begin{equation}
\chi(\rho)^2=\chi(\rho^2)=\kappa\chi(\rho)+\sum_{x\in C}\mathrm{FPdim}(x)\chi(x)=\kappa\chi(\rho)+N.
\end{equation}
Therefore $\chi(\rho)$ is one of the two solutions of Equation (\ref{eq:def}). 
\end{proof}

\begin{proposition}\label{prop:char2}
Let $R(S,\kappa)$ be a near-integral fusion ring.  Any $\varphi\in\mathrm{Irr}(S)$ with $\varphi\neq\mathrm{FPdim}$ extends uniquely to $\tilde{\varphi}\in\mathrm{Irr}(R)$ such that $\tilde{\varphi}(\rho)=0$. 
\end{proposition}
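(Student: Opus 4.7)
The plan is to construct the extension explicitly by letting $\tilde\varphi$ act on the same vector space $V_\varphi$ as $\varphi$, with $\tilde\varphi|_S = \varphi$ and $\tilde\varphi(\rho)$ equal to the zero operator on $V_\varphi$. Uniqueness under the stipulation $\tilde\varphi(\rho) = 0$ is then immediate, since the carrier space, the action of $S$, and the action of $\rho$ are all prescribed. Irreducibility of $\tilde\varphi$ is equally quick: a $\tilde\varphi$-invariant subspace of $V_\varphi$ coincides with a $\varphi$-invariant subspace (the $\rho$-action being zero imposes no extra condition), so any such subspace is trivial by irreducibility of $\varphi$.

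The substantive content is checking that $\tilde\varphi$ is well-defined as a representation of $R$. The relations $x\rho = \mathrm{FPdim}(x)\rho = \rho x$ for $x \in C$, established just before Equation (\ref{eq:def}), hold trivially under $\tilde\varphi$ since both sides act as zero. The only remaining fusion relation is $\rho^2 = \kappa\rho + E$, where $E := \sum_{x \in C}\mathrm{FPdim}(x)\,x \in S$. Applying $\tilde\varphi$ to both sides yields $0 = 0 + \varphi(E)$, so the entire argument reduces to showing that $\varphi(E) = 0$.

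I expect this final step to be the main content. The approach is to identify $E$, up to a nonzero scalar, with the primitive central idempotent $e_{\mathrm{FPdim}} \in S \otimes_{\mathbb{Z}}\mathbb{C}$ corresponding to the one-dimensional representation $\mathrm{FPdim}$. By Perron--Frobenius, the vector $(\mathrm{FPdim}(x))_{x \in C}$ is a simultaneous eigenvector of every left-multiplication matrix $N_y$ with eigenvalue $\mathrm{FPdim}(y)$, which is exactly the identity $yE = \mathrm{FPdim}(y)\,E$ inside $S$ for every $y \in C$. Thus $E$ lies in the $\mathrm{FPdim}$-isotypic subspace of the left regular representation of $S$. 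Since $\mathrm{FPdim}$ is one-dimensional, its multiplicity in the regular representation is one, so this isotypic component has dimension one in the Wedderburn decomposition $S \otimes_{\mathbb{Z}}\mathbb{C} \cong \bigoplus_\psi M_{d_\psi}(\mathbb{C})$, forcing $E \in \mathbb{C}\cdot e_{\mathrm{FPdim}}$. As $\varphi \neq \mathrm{FPdim}$, the idempotent $e_{\mathrm{FPdim}}$ annihilates $V_\varphi$, giving $\varphi(E) = 0$ as required. The only delicate piece is this Perron--Frobenius identification of $E$ with a scalar multiple of $e_{\mathrm{FPdim}}$; everything else is formal verification.
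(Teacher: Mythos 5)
Your proposal is correct and follows essentially the same route as the paper: extend $\varphi$ by zero on $\rho$, observe that the only nontrivial relation to check is $\tilde\varphi(\rho^2)=0$, and reduce this to $\varphi\bigl(\sum_{x\in C}\mathrm{FPdim}(x)\,x\bigr)=0$. The only difference is that the paper cites \cite[Lemma 2.3]{ost15} for this last vanishing, whereas you prove it inline via the Perron--Frobenius identity $yE=\mathrm{FPdim}(y)E$ and the identification of $E$ with a scalar multiple of the central idempotent $e_{\mathrm{FPdim}}$, which is a correct proof of the cited fact.
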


\begin{proof}
Let $\varphi:S\to\mathrm{End}(V)$ be an irreducible representation of $S$ and define $\tilde{\varphi}:R\to\mathrm{End}(V)$ such that $\tilde{\varphi}|_S=\varphi$ and $\tilde{\varphi}(\rho)=0$.  The function $\tilde{\varphi}$ is evidently additive and the only nontrivial verification to ensure $\tilde{\varphi}$ is multiplicative is that $\tilde{\varphi}(\rho^2)=0$.  To this end, we compute
\begin{equation}\label{tooo}
\tilde{\varphi}(\rho^2)=\kappa\tilde{\varphi}(\rho)+\sum_{x\in C}\mathrm{FPdim}(x)\tilde{\varphi}(x)=\sum_{x\in C}\mathrm{FPdim}(x)\varphi(x^\ast)=0
\end{equation}
where the final equality follows from \cite[Lemma 2.3]{ost15}.  Thus $\tilde{\varphi}\in\mathrm{Rep}(R)$.  It is evident that $\tilde{\varphi}\in\mathrm{Irr}(R)$ as any nontrivial decomposition of $\tilde{\varphi}\in\mathrm{Irr}(R)$ would correspond to a nontrivial decomposition of $\varphi\in\mathrm{Irr}(S)$ into irreducible representations.  In sum,
\begin{equation}
\dim(\chi_+)^2+\dim(\chi_-)^2+\sum_{\substack{\varphi\in\mathrm{Irr}(S) \\ \varphi\neq\mathrm{FPdim}}}\dim(\varphi)^2=1+1+\mathrm{rank}(S)-1=\mathrm{rank}(R),
\end{equation}
and so this collection completely describes $\mathrm{Irr}(R)$.
\end{proof}

\begin{corollary}\label{cor:form}
Let $R(S,\kappa)$ be a near-integral fusion ring.  The formal codegrees of $R(S,\kappa)$ are the set of formal codegrees $f$ of $S$ such that $f\neq\mathrm{FPdim}(S)$ along with $\mathrm{FPdim}(S)+d_\pm^2$.
\end{corollary}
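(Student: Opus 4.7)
The plan is to apply the standard formula $f_\pi = \sum_{b \in B} \chi_\pi(b)\chi_\pi(b^\ast)$ for the formal codegree of an irreducible representation $\pi$ (with trace character $\chi_\pi$) of a fusion ring---a direct consequence of the first orthogonality relation \cite[Propositions 2.12--2.13]{MR2535395}---to each element of $\mathrm{Irr}(R)$ as enumerated by Propositions \ref{prop:char} and \ref{prop:char2}. Since these propositions together exhaust $\mathrm{Irr}(R)$, this reduces the problem to two direct computations, one for each type of irreducible representation.

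First, for the two one-dimensional characters $\chi_\pm$, I would use $\chi_\pm|_S = \mathrm{FPdim}$, $\chi_\pm(\rho) = d_\pm$, and $\rho = \rho^\ast$ to split the defining sum into contributions from $C$ and $\{\rho\}$, giving
\begin{equation*}
f_{\chi_\pm} \;=\; \sum_{x \in C} \mathrm{FPdim}(x)\mathrm{FPdim}(x^\ast) + d_\pm^2 \;=\; \mathrm{FPdim}(S) + d_\pm^2.
\end{equation*}

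Second, for the extension $\tilde{\varphi}$ of any non-FPdim $\varphi \in \mathrm{Irr}(S)$, the summand associated to $\rho$ vanishes because $\chi_{\tilde{\varphi}}(\rho) = \mathrm{Tr}(\tilde{\varphi}(\rho)) = \mathrm{Tr}(0) = 0$. The remaining sum is indexed by $x \in C$, on which $\tilde{\varphi}$ agrees with $\varphi$, so the identity $f_{\tilde{\varphi}} = \sum_{x \in C} \chi_\varphi(x)\chi_\varphi(x^\ast) = f_\varphi$ falls out immediately. Assembling these two observations yields exactly the claimed set.

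There is no real obstacle here: the corollary is essentially a bookkeeping consequence of the representation theory of $R(S,\kappa)$ worked out in Propositions \ref{prop:char} and \ref{prop:char2}, together with the fact that the formal codegree of $\mathrm{FPdim}_S$ is $\mathrm{FPdim}(S)$ itself (which explains why this value is excluded from the list of inherited codegrees and reappears as the leading term in the new codegrees $\mathrm{FPdim}(S) + d_\pm^2$).
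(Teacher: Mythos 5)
Your proposal is correct and is exactly the argument the paper leaves implicit (the corollary is stated without proof, as an immediate consequence of Propositions \ref{prop:char} and \ref{prop:char2}): enumerate $\mathrm{Irr}(R)$ via those two propositions and compute each formal codegree directly. One caveat: the formula you quote, $f_\pi=\sum_{b\in B}\chi_\pi(b)\chi_\pi(b^\ast)$, is only valid for one-dimensional $\pi$; the orthogonality relation actually gives $f_\pi\dim(\pi)=\sum_{b\in B}\chi_\pi(b)\chi_\pi(b^\ast)$ (equivalently $f_\pi=(\dim\pi)^{-2}\sum_{b}\chi_\pi(bb^\ast)$), and since $S$ need not be commutative the representations $\varphi$ can have $\dim\varphi>1$. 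This slip is harmless here: for $\chi_\pm$ the dimension is $1$, and for the extensions you obtain $f_{\tilde{\varphi}}\dim(\tilde{\varphi})=f_\varphi\dim(\varphi)$ with $\dim(\tilde{\varphi})=\dim(\varphi)$, so the normalization cancels and the conclusion $f_{\tilde{\varphi}}=f_\varphi$ stands.
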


\subsection{Characterization}\label{subsec:char}

\begin{theorem}\label{thm:char}
Let $(R,B)$ be a fusion ring.  Then $R$ is near-integral if and only if there exists a ring homomorphism $\chi:R\to\mathbb{C}$ and $\rho\in B$ such that $\chi(\rho)\neq\mathrm{FPdim}(\rho)$ and $\chi(x)=\mathrm{FPdim}(x)$ for all $x\neq\rho$.
\end{theorem}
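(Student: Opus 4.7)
The forward direction $(\Rightarrow)$ is immediate from Proposition \ref{prop:char}: taking $\chi=\chi_-$ yields a ring homomorphism with $\chi_-|_S=\mathrm{FPdim}$ (so $\chi_-(x)=\mathrm{FPdim}(x)$ for every $x\in C$) and $\chi_-(\rho)=d_-<0\neq d_+=\mathrm{FPdim}(\rho)$.

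For the reverse direction $(\Leftarrow)$, I plan to set $C:=B\setminus\{\rho\}$ and verify that its $\mathbb{Z}$-span $S$ is a proper fusion subring of maximal rank, which exhibits $R$ as near-integral. Three things must be checked: that $1\in C$, that $C$ is closed under multiplication in $R$, and that $C$ is closed under duality. The first is immediate since $\chi(1)=1=\mathrm{FPdim}(1)$ forces $\rho\neq 1$.

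For closure under multiplication, the key identity is that for any $x,y\in B$, expanding $xy=\sum_z c_{xy}^z z$ in the basis $B$ and applying both $\chi$ and $\mathrm{FPdim}$, all $z\in C$ contributions cancel and one is left with
\[
\chi(xy)-\mathrm{FPdim}(xy)=c_{xy}^\rho\bigl(\chi(\rho)-\mathrm{FPdim}(\rho)\bigr).
\]
Taking $x,y\in C$ makes the left-hand side zero, since $\chi$ and $\mathrm{FPdim}$ are ring homomorphisms agreeing on $C$, and cancelling the nonzero factor $\chi(\rho)-\mathrm{FPdim}(\rho)$ gives $c_{xy}^\rho=0$.

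The main obstacle is closure under duality, which is equivalent to $\rho^\ast=\rho$. I will argue by contradiction: if $\rho^\ast\neq\rho$ then $\rho^\ast\in C$, so $\chi(\rho^\ast)=\mathrm{FPdim}(\rho^\ast)=\mathrm{FPdim}(\rho)$, and the same subtraction applied to $\rho\rho^\ast$ yields
\[
\chi(\rho)\mathrm{FPdim}(\rho)-\mathrm{FPdim}(\rho)^2=c_{\rho\rho^\ast}^\rho\bigl(\chi(\rho)-\mathrm{FPdim}(\rho)\bigr),
\]
forcing $c_{\rho\rho^\ast}^\rho=\mathrm{FPdim}(\rho)$ after cancellation. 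But combining this with the duality axiom $c_{\rho\rho^\ast}^1=1$ produces
\[
\mathrm{FPdim}(\rho)^2=\mathrm{FPdim}(\rho\rho^\ast)\geq c_{\rho\rho^\ast}^1+c_{\rho\rho^\ast}^\rho\cdot\mathrm{FPdim}(\rho)=1+\mathrm{FPdim}(\rho)^2,
\]
an impossibility. Hence $\rho^\ast=\rho$, so $C^\ast=C$, and $S$ is a proper fusion subring of $R$ with $\mathrm{rank}(S)+1=\mathrm{rank}(R)$.
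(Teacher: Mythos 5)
Your proof is correct and follows the same overall strategy as the paper's: the forward direction is quoted from Proposition \ref{prop:char}, and the converse exhibits $C=B\setminus\{\rho\}$ as the basis of a proper fusion subring of maximal rank. The execution of the key step is genuinely different, however. The paper proves $c_{x,y}^\rho=0$ for $x,y\in C$ by writing $\mathrm{FPdim}(xy)=\chi(xy)$ as an absolute value, applying the triangle inequality, and invoking the spectral bound $|\chi(\rho)|\leq\mathrm{FPdim}(\rho)$ (valid because $\chi(\rho)$ is an eigenvalue of the fusion matrix of $\rho$); equality throughout then forces $\chi(\rho)=\mathrm{FPdim}(\rho)$ whenever $c_{x,y}^\rho\neq0$. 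Your subtraction identity
\[
\chi(xy)-\mathrm{FPdim}(xy)=c_{x,y}^\rho\bigl(\chi(\rho)-\mathrm{FPdim}(\rho)\bigr)
\]
reaches the same conclusion by pure cancellation, with no Perron--Frobenius input, so it is the more elementary route. You also make explicit the verification that $\rho^\ast=\rho$ (equivalently, that $C$ is closed under duality), a point the paper leaves implicit in its appeal to the kernel of a character being a fusion subring (it also follows from $\chi(x^\ast)=\overline{\chi(x)}$, since $\mathrm{FPdim}(x)$ is real); your dimension-count contradiction is a clean, self-contained substitute for that citation. Both arguments are complete.
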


\begin{proof}
The forward direction follows from Proposition \ref{prop:char}.  The converse direction follows by considering the \emph{kernel} \cite[Definition 3.1]{MR3552799}
\begin{equation}
C:=B\setminus\{\rho\}=\{x\in B:\chi(x)=\mathrm{FPdim}(x)\}.
\end{equation}
Note that if $x,y\in C$, then by the triangle inequality,
\begin{align}
\mathrm{FPdim}(xy)=\chi(xy)&=\left|c_{x,y}^\rho\chi(\rho)+\sum_{z\in C}c_{x,y}^z\mathrm{FPdim}(z)\right| \\
&\leq|c_{x,y}^\rho\chi(\rho)|+\sum_{z\in C}c_{x,y}^z\mathrm{FPdim}(z) \\
&\leq\mathrm{FPdim}(xy)
\end{align}
since $\chi(\rho)$ is an eigenvalue of the matrix of (left) multiplication by $\rho$.  The triangle inequality implies $\chi(\rho)=\mathrm{FPdim}(\rho)$ if $c_{x,y}^\rho\neq0$.  But $\chi\neq\mathrm{FPdim}$ by assumption, thus $c_{x,y}^\rho=0$.  Moreover $C$ is closed under fusion, hence $S\subset R$, the $\mathbb{Z}$-linear span of the basis $C$, is a fusion subring.  This implies $R\cong R(S,c_{\rho,\rho}^\rho)$ is near-integral as we aimed to prove.
\end{proof}

\begin{example}
Let $G$ be a finite group.  The character ring $R_G$ is commutative, and the (1-dimensional) irreducible representations of $R_G$ are given by columns of the character table of $G$.  Theorem \ref{thm:char} implies $R_G$ has near-integral fusion if and only if there exists a conjugacy class $x$ of $G$ and unique irreducible representation $\rho$ of $G$ whose character $\chi_\rho$ satisfies $\chi_\rho(x)\neq\dim(\rho)$ and $\chi_\varphi(x)=\dim(x)$ for all $\varphi\neq\rho$.  Column orthogonality with the trivial class states that
\begin{equation}
0=\sum_{\varphi\in\mathrm{Irr}(G)}\dim(\varphi)\chi_\varphi(x)=\sum_{\varphi\neq\rho}\dim(\varphi)^2+\dim(\rho)\chi_\rho(x).
\end{equation}
Hence
\begin{align}
\chi_\rho(x)=\dfrac{-1}{\dim(\rho)}\sum_{\varphi\neq\rho}\dim(\varphi)^2&=\dfrac{-1}{\dim(\rho)}(|G|-\dim(\rho)^2) \\
&=\dim(\rho)-\dfrac{|G|}{\dim(\rho)}.
\end{align}
But $\chi_\rho(x)^2=\kappa\chi_\rho(x)+|G|-\dim(\rho)^2$, hence
\begin{equation}\label{kappaeq}
\kappa=2\dim(\rho)-\dfrac{|G|}{\dim(\rho)}.
\end{equation}
The smallest example is $C_2$, the cyclic group of order $2$, whose character ring is a Tambara-Yamagami fusion ring; indeed $\kappa=2\cdot1-2/1=0$.  A non-trivial example of such $G$ is the group $M_9\cong\mathrm{PSU}(3,2)$.  This is two examples in one since the maximal fusion subring of $R_{M_9}$ is the Tambara-Yamagami fusion ring $R_{Q_8}\cong R(\mathbb{Z}C_2^2,0)$.  Thus $R_{M_9}\cong R(R(\mathbb{Z}C_2^2,0),7)$.  We include the character table for this example in Figure \ref{fig:repf5}.  Moreso, this example has a related near-group example, $R_{F_9}\cong R(\mathbb{Z}C_8,7)$ which lives in the infinite family of near-group examples coming from the Frobenius groups $F_q$ with $q$ a prime power.  We include a variety of small-rank illustrative examples in Figures \ref{fig:repc2} (non self-dual, near-group, etc.) in Figure \ref{fig:rep2}--\ref{fig:repc23c4}, some of which will be referenced in Section \ref{sec:brad}.

\begin{figure}[H]
\centering
\begin{equation*}
\begin{array}{|cc|}
\hline1 & 1 \\
1 & -1  \\
\hline
\end{array}
\qquad
\begin{array}{|ccc|}
\hline1 & 1 & 1 \\
1 & 1 & -1  \\
2 & -1 & 0\\
\hline
\end{array}
\qquad
\begin{array}{|cccc|}
\hline1 & 1 & 1 & 1 \\
1 & 1 & \zeta_3 & \zeta_3^2 \\
1 & 1 & \zeta_3^2 & \zeta_3 \\
3 & -1 & 0 & 0 \\
\hline
\end{array}
\qquad
\begin{array}{|ccccc|}
\hline1 & 1 & 1 & 1 & 1\\
1 & 1 & -1 & 1 & -1 \\
1 & 1 & 1 & -1  & -1 \\
1 & 1 & -1 & -1 & 1 \\
2 & -2 & 0 & 0 & 0\\
\hline
\end{array}
\end{equation*}
    \caption{Character tables of $C_2$, $S_3$, $A_4$, and $D_4$ \& $Q_8$}%
    \label{fig:repc2}%
\end{figure}

\begin{figure}[H]
\centering
\begin{equation*}
\begin{array}{|ccccc|}
\hline1 & 1 & 1 & 1 & 1 \\
1 & 1 & -1 & -1 & 1 \\
1 & 1 & -\zeta_4 & \zeta_4 & -1 \\
1 & 1 & \zeta_4 & -\zeta_4 & -1 \\
4 & -1 & 0 & 0 & 0 \\
\hline
\end{array}
\qquad
\begin{array}{|cccccc|}
\hline1 & 1 & 1 & 1 & 1 & 1 \\
1 & 1 & 1 & -1 & 1 & -1 \\
1 & 1 & 1 & 1 & -1 & -1 \\
1 & 1 & 1 & -1 & -1 & 1 \\
2 & 2 & -2 & 0 & 0 & 0 \\
8 & -1 & 0 & 0 & 0 & 0\\\hline
\end{array}
\end{equation*}
    \caption{Character tables of $F_5$ and $\mathrm{PSU}(3,2)$}%
    \label{fig:repf5}%
\end{figure}

\begin{figure}[H]
\centering
\begin{equation*}
\begin{array}{|cccccccccc|}
\hline1 & 1 & 1 & 1 & 1 & 1 & 1 & 1 & 1 & 1 \\
1 & 1 & -1 & 1 & 1 & -1 & -1 & 1 & 1 & 1 \\
1 & 1 & -1 & \zeta_3 & \zeta_3^2 & \zeta_6 & \zeta_6^5 & \zeta_3^2 & 1 & \zeta_3 \\
1 & 1 & 1 & \zeta_3^2 & \zeta_3 & \zeta_3 & \zeta_3^2 & \zeta_3 & 1 & \zeta_3^2  \\
1 & 1 & 1 & \zeta_3 & \zeta_3^2 & \zeta_3^2 & \zeta_3 & \zeta_3^2 & 1 & \zeta_3  \\
1 & 1 & -1 & \zeta_3^2 & \zeta_3 & \zeta_6^5 & \zeta_6 & \zeta_3 & 1 & \zeta_3^2 \\
2 & 2 & 0 & 2 & 2 & 0 & 0 & -1 & -1 & -1 \\
2 & 2 & 0 & \alpha & \bar{\alpha} & 0 & 0 & \zeta_6 & -1 & \zeta_6^5 \\
2 & 2 & 0 & \bar{\alpha} & \alpha & 0 & 0 & \zeta_6^5 & -1 & \zeta_6   \\
6 & -3 & 0 & 0 & 0 & 0 & 0 & 0 & 0 & 0 \\\hline
\end{array}
\end{equation*}
    \caption{Character table of $\mathrm{Aut}(D_9)$ with $\alpha:=-1+\sqrt{-3}$}%
    \label{fig:rep2}%
\end{figure}

\begin{figure}[H]
\centering
\begin{equation*}
\begin{array}{|ccccccccccc|}
\hline
1 & 1 & 1 & 1 & 1 & 1 & 1 & 1 & 1 & 1 & 1  \\
1 & 1 & 1 & 1 & 1 & -1 & 1 & -1 & -1 & 1 & -1  \\
1 & 1 & 1 & 1 & 1 & 1 & -1 & 1 & -1 & -1 & -1  \\
1 & 1 & 1 & 1 & 1 & -1 & -1 & -1 & 1 & -1 & 1  \\
1 & 1 & -1 & -1 & 1 & 1 & \zeta_4 & -1 & -\zeta_4 & -\zeta_4 & \zeta_4   \\
1 & 1 & -1 & -1 & 1 & -1 & \zeta_4 & 1 & \zeta_4 & -\zeta_4 & -\zeta_4   \\
1 & 1 & -1 & -1 & 1 & 1 & -\zeta_4 & -1 & \zeta_4 & \zeta_4 & -\zeta_4  \\
1 & 1 & -1 & -1 & 1 & -1 & -\zeta_4 & 1 & -\zeta_4 & \zeta_4 & \zeta_4   \\
2 & 2 & -2 & 2 & -2 & 0 & 0 & 0 & 0 & 0 & 0  \\
2 & 2 & 2 & -2 & -2 & 0 & 0 & 0 & 0 & 0 & 0  \\
4 & -4 & 0 & 0 & 0 & 0 & 0 & 0  & 0 & 0 & 0  \\
\hline
\end{array}
\end{equation*}
    \caption{Character table of $C_2^3\rtimes C_4$, $\mathrm{Smallgroup}(32,7)$, $\mathrm{Smallgroup}(32,8)$}%
    \label{fig:repc23c40}%
\end{figure}

\begin{figure}[H]
\centering
\begin{equation*}
\begin{array}{|ccccccccccc|}
\hline
1 & 1 & 1 & 1 & 1 & 1 & 1 & 1 & 1 & 1 & 1  \\
1 & 1 & 1 & 1 & 1 & -1 & 1 & -1 & 1 & -1 & -1  \\
1 & 1 & -1 & -1 & 1 & 1 & -1 & -1 & 1 & 1 & -1  \\
1 & 1 & -1 & -1 & 1 & -1 & -1 & 1 & 1 & -1 & 1  \\
1 & 1 & -1 & -1 & 1 & 1 & 1 & -1 & -1 & -1 & 1   \\
1 & 1 & -1 & -1 & 1 & -1 & 1 & 1 & -1 & 1 & -1   \\
1 & 1 & 1 & 1 & 1 & 1 & -1 & 1 & -1 & -1 & -1  \\
1 & 1 & 1 & 1 & 1 & -1 & -1 & -1 & -1 & 1 & 1   \\
2 & 2 & -2 & 2 & -2 & 0 & 0 & 0 & 0 & 0 & 0  \\
2 & 2 & 2 & -2 & -2 & 0 & 0 & 0 & 0 & 0 & 0  \\
4 & -4 & 0 & 0 & 0 & 0 & 0 & 0  & 0 & 0 & 0  \\
\hline
\end{array}
\end{equation*}
    \caption{Character table of $C_8\rtimes C_2^2$, $\mathrm{Smallgroup}(32,44)$}%
    \label{fig:repc23c4}%
\end{figure}

\end{example}


\section{Categorical near-integral fusion}\label{seccat}

\subsection{Basic results}

Let $\mathcal{C}$ be a fusion category such that its Grothendieck ring is the near-integral fusion ring $R(S,\kappa)$.  Denote $N:=\mathrm{FPdim}(S)$, $\rho\in\mathcal{O}(\mathcal{C})$ the simple object corresponding to the distinguished basis element $\rho\in R$, $\mathcal{D}\subset\mathcal{C}$ the fusion subcategory corresponding to $S$, and recall the roots $d_\pm$ from Equation \ref{eq:def}.  A generic fusion category with near-integral fusion rules will then be denoted $\mathcal{C}(\mathcal{D},\kappa)$ for an integral fusion category $\mathcal{D}$ and $\kappa\in\mathbb{Z}_{\geq0}$.  The abbreviated notation
\begin{equation}
[Y,Z]:=\dim_\mathbb{C}\mathrm{Hom}_\mathcal{C}(Y,Z)
\end{equation}
for any $Y,Z\in\mathcal{C}$ will be used in this section where the ambient category $\mathcal{C}$ is implied by the objects $Y,Z$ themselves which will be indicated clearly. 

\par We first show that the notions of categorical and Frobenius-Perron dimensions are essentially the same for near-integral fusion categories, so that we may use the characters $\mathrm{dim}$ and $\mathrm{FPdim}$ interchangeably without loss of generality (see \cite[Section 9.5]{tcat}).

\begin{lemma}\label{cor:1}
Any near-integral fusion category is Galois conjugate to a pseudo-unitary near-integral fusion category.  In particular, every near-integral fusion category has a canonical spherical structure.
\end{lemma}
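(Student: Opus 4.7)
The plan is to reduce to the pseudo-unitary case via Galois conjugation: since pseudo-unitary fusion categories carry a canonical spherical structure by \cite[Proposition 9.5.1]{tcat}, it suffices to exhibit a Galois conjugate $\mathcal{C}^\sigma$ of $\mathcal{C}$ that is pseudo-unitary, and then transport its canonical spherical structure back to $\mathcal{C}$ along $\sigma^{-1}$.

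I would first dispatch the case $d_+\in\mathbb{Z}$ (equivalently, $\kappa^2+4N$ is a perfect square), in which $\mathcal{C}$ is integral and hence pseudo-unitary by standard results. In the remaining case $d_+\notin\mathbb{Q}$, Equation (\ref{eq:def}) is the minimal polynomial of $d_+$ over $\mathbb{Q}$ with Galois-conjugate root $d_-$, and there is a unique nontrivial $\sigma\in\mathrm{Gal}(\mathbb{Q}(d_+)/\mathbb{Q})$ exchanging $d_+\leftrightarrow d_-$. The central step is to pin down the global dimension of $\mathcal{C}$. Because $\mathcal{D}$ is integral and therefore pseudo-unitary, $|X|^2=\mathrm{FPdim}(X)^2$ for every $X\in\mathcal{O}(\mathcal{D})$, hence $\dim(\mathcal{C})=N+|\rho|^2$. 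Since $\dim(\mathcal{C})$ is a formal codegree of the Grothendieck ring \cite{codegrees}, and the formal codegrees of $R(S,\kappa)$ arising from nontrivial irreducible characters of $S$ are strictly less than $N=\mathrm{FPdim}(S)$ by Corollary \ref{cor:form}, one is forced into $\dim(\mathcal{C})\in\{N+d_+^2,N+d_-^2\}$, i.e., $|\rho|^2\in\{d_+^2,d_-^2\}$. If $|\rho|^2=d_+^2$ then $\dim(\mathcal{C})=\mathrm{FPdim}(\mathcal{C})$ and $\mathcal{C}$ is already pseudo-unitary; otherwise the Galois conjugate $\mathcal{C}^\sigma$, whose Grothendieck ring is unchanged, satisfies $\dim(\mathcal{C}^\sigma)=\sigma(\dim(\mathcal{C}))=N+d_+^2=\mathrm{FPdim}(\mathcal{C}^\sigma)$, and so is pseudo-unitary.

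The main obstacle is the argument that the formal codegrees of $R(S,\kappa)$ coming from nontrivial irreducible characters of $S$ are strictly smaller than $N$: this is where the integrality of $\mathcal{D}$ and the maximality of $\mathrm{FPdim}$ as a formal codegree on an integral fusion ring combine with Corollary \ref{cor:form} to force $|\rho|^2$ into the Galois orbit $\{d_+^2,d_-^2\}$. Once that is secured, the second assertion of the lemma follows immediately: the canonical spherical structure of the pseudo-unitary Galois conjugate $\mathcal{C}^\sigma$ given by \cite[Proposition 9.5.1]{tcat} transports back along $\sigma^{-1}$ to yield a canonical spherical structure on $\mathcal{C}$.
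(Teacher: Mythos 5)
Your proof is correct in substance but follows a genuinely different route from the paper's. The paper handles the non-integral case by citing an external result that near-integral fusion categories are spherical, so that $\dim$ becomes a ring homomorphism on the Grothendieck ring taking nonzero values on all simple objects; Proposition \ref{prop:char} then forces $\dim\in\{\chi_+,\chi_-\}$, and since $\chi_\pm$ are Galois conjugate, so are $\dim$ and $\mathrm{FPdim}$. You instead avoid presupposing any spherical structure and argue through formal codegrees: $\dim(\mathcal{C})=N+|\rho|^2$ is a formal codegree of the Grothendieck ring, and Corollary \ref{cor:form} pins it down to $N+d_\pm^2$. That is a legitimate and arguably more self-contained alternative, at the cost of invoking Ostrik's results that the global dimension is a formal codegree and that formal codegrees of the integral subcategory $\mathcal{D}$ are positive integers dividing $N$ (the same facts the paper uses in Proposition \ref{propcomm}). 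One slip to repair: the formal codegrees of $S$ attached to characters $\varphi\neq\mathrm{FPdim}$ need \emph{not} be strictly less than $N$ --- for $S=\mathbb{Z}C_2$ (the Ising case) the nontrivial character has formal codegree $2=N$, and more generally $f_\varphi=N$ occurs whenever the corresponding conjugacy-class-like datum is ``central.'' What is true, and is all you need, is $f_\varphi\mid N$, hence $f_\varphi\leq N<N+|\rho|^2=\dim(\mathcal{C})$ since $|\rho|^2\geq 1$ by \cite[Theorem 2.3]{ENO}; with that correction your dichotomy $|\rho|^2\in\{d_+^2,d_-^2\}$ and the subsequent Galois-conjugation step go through.
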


\begin{proof}
If a near-integral fusion category is integral, it is pseudounitary \cite[Proposition 9.6.5]{tcat}.  Otherwise, Proposition \ref{prop:char} implies that $\dim$, which is a character of the Grothendieck ring of $\mathcal{C}$ \cite[Proposition 4.7.12]{tcat} since $\mathcal{C}$ is spherical \cite[Theorem 4.5]{MR4413278}, and $\mathrm{FPdim}$ are Galois conjugate as both take non-zero values on all simple objects.  The fact that $\dim$ takes non-zero values was proven in \cite[Theorem 2.3]{ENO}, while $\mathrm{FPdim}$ taking non-zero values follows from fusion matrices not being nilpotent.
\end{proof}

\par Recall the notions related to the double construction of a (spherical) fusion category from Section \ref{sec:brad}. For near-integral fusion categories $\mathcal{C}$, the most important simple object in $\mathcal{Z}(\mathcal{C})$ is $A_{\chi_-}\subset I(\mathbbm{1})$ corresponding to the character $\chi_-$ defined in Proposition \ref{prop:char}.  We compute
\begin{align}\label{furfteen}
\dfrac{2N+\kappa d_+}{N}&=\dfrac{N+d_+^2}{-d_-d_+}=1-\dfrac{d_+}{d_-}=2+\dfrac{\kappa}{N}d_+,
\end{align}
thus
\begin{equation}\label{thurteen}
\dim(A_{\chi_-})=\dfrac{\dim(\mathcal{C})}{f_{\chi_-}}=\dfrac{2N+\kappa d_+}{2N+\kappa d_-}=-\dfrac{d_+}{d_-}=\dfrac{d^2}{N}=1+\dfrac{\kappa}{N}d_+.
\end{equation}

\begin{lemma}\label{lem:divides0}
Let $\mathcal{C}$ be a near-integral fusion category.  If $\mathrm{FPdim}(\mathcal{C})\in\mathbb{Z}$, then $[F(A_{\chi_-}),\rho]=0$.\end{lemma}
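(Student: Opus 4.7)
The plan is to compute $\mathrm{FPdim}(F(A_{\chi_-}))$ explicitly and test it against the multiplicity $[\mathbbm{1},F(A_{\chi_-})]=\dim(\chi_-)=1$, ruling out $\rho$ as a summand by a short dimension count. First I would observe that the hypothesis $\mathrm{FPdim}(\mathcal{C})=2N+\kappa d_+\in\mathbb{Z}$ forces either $\kappa=0$ or $d_+\in\mathbb{Z}$, since $d_+$ is an algebraic integer and hence rational only when integral.

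Next I would collect the two ingredients driving the argument. From Equation (\ref{thurteen}), $\mathrm{FPdim}(A_{\chi_-})=1+\kappa d_+/N$, and because the forgetful functor preserves Frobenius-Perron dimension this is also $\mathrm{FPdim}(F(A_{\chi_-}))$. From the formal codegree description recalled in Section \ref{subsec:braid}, $A_{\chi_-}$ appears as a simple summand of $I(\mathbbm{1})$ with multiplicity $\dim(\chi_-)=1$, which by Frobenius reciprocity between $F$ and $I$ gives $[\mathbbm{1},F(A_{\chi_-})]=1$.

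The argument then splits along the case distinction above. When $\kappa=0$, $F(A_{\chi_-})$ has Frobenius-Perron dimension $1$ and is therefore a simple invertible object containing $\mathbbm{1}$, whence $F(A_{\chi_-})\cong\mathbbm{1}$ and in particular $\rho$ is not a summand. When $d_+\in\mathbb{Z}$ and $\kappa\geq 1$, Lemma \ref{bund} gives $\kappa<N$; assuming toward contradiction that $[F(A_{\chi_-}),\rho]\geq 1$ and combining with $[\mathbbm{1},F(A_{\chi_-})]=1$ forces
\[
1+\dfrac{\kappa d_+}{N}=\mathrm{FPdim}(F(A_{\chi_-}))\geq 1+d_+,
\]
which rearranges to $\kappa\geq N$ and contradicts Lemma \ref{bund}.

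The main obstacle is conceptual rather than computational: one must identify that the integrality hypothesis is exactly what partitions the problem between the Tambara--Yamagami-like regime ($\kappa=0$, where $A_{\chi_-}$ is literally invertible in $\mathcal{Z}(\mathcal{C})$) and the genuinely integral regime, where the strict inequality $\kappa<N$ from Lemma \ref{bund} is precisely the slack needed to keep $\rho$ from fitting inside $F(A_{\chi_-})$ alongside the unit.
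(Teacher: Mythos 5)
Your proposal is correct and follows essentially the same route as the paper: the same case split (forced by integrality of $\mathrm{FPdim}(\mathcal{C})=2N+\kappa d_+$ into $\kappa=0$ versus $d_+\in\mathbb{Z}$), the same use of Lemma \ref{bund} to get $\kappa<N$ and hence $\dim(A_{\chi_-})<1+d_+$ in the integral case, and the same identification $F(A_{\chi_-})\cong\mathbbm{1}$ when $\kappa=0$. You merely make explicit the step $[\mathbbm{1},F(A_{\chi_-})]=1$ that the paper leaves implicit.
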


\begin{proof}
We must have $\kappa<N$ if $d_+\in\mathbb{Z}$ by Lemma \ref{bund}, thus $\dim(A_{\chi_-})=1+(\kappa/N)d_+<1+d_+$ and the result follows.  If $d_+\not\in\mathbb{Z}$ and $\mathrm{FPdim}(\mathcal{C})\in\mathbb{Z}$, then $\kappa=0$ hence $[F(A_{\chi_-}),\rho]\leq[F(\mathbbm{1}),\rho]=0$ as well.
\end{proof}

\begin{lemma}\label{lem:divides}
Let $\mathcal{C}$ be a near-integral fusion category with $\mathrm{FPdim}(\mathcal{C})$ irrational.  Then $N\mid\kappa$.
\end{lemma}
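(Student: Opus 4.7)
The plan is to pass to the Drinfeld center and analyze the simple object $A_{\chi_-}$ studied in the paragraph preceding Lemma~\ref{lem:divides0}. By Equation~(\ref{thurteen}), $\mathrm{FPdim}(A_{\chi_-}) = 1 + (\kappa/N)d_+$, and since the forgetful functor $F:\mathcal{Z}(\mathcal{C})\to\mathcal{C}$ preserves Frobenius--Perron dimensions, the same value equals $\mathrm{FPdim}(F(A_{\chi_-}))$. The strategy is to read off $\kappa/N$ as a structure constant of this decomposition.

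First I would record that the hypothesis $\mathrm{FPdim}(\mathcal{C})=2N+\kappa d_+\in\mathbb{R}\setminus\mathbb{Q}$ forces $d_+$ to be irrational, so that $d_+$ has minimal polynomial $x^2-\kappa x-N$ over $\mathbb{Q}$ and $\{1,d_+\}$ is $\mathbb{Q}$-linearly independent. Next, decompose
\[
F(A_{\chi_-}) \;\cong\; \bigoplus_{X\in\mathcal{O}(\mathcal{C})} n_X\,X, \qquad n_X := [F(A_{\chi_-}),X]\in\mathbb{Z}_{\geq 0}.
\]
Since the simple objects of $\mathcal{C}$ are $\mathcal{O}(\mathcal{D})\sqcup\{\rho\}$ and each $X\in\mathcal{O}(\mathcal{D})$ has integer Frobenius--Perron dimension (as $\mathcal{D}$ is integral), this produces
\[
\mathrm{FPdim}(F(A_{\chi_-})) \;=\; b + a\,d_+, \qquad a := n_\rho,\quad b := \sum_{X\in\mathcal{O}(\mathcal{D})}n_X\,\mathrm{FPdim}(X),
\]
with $a,b\in\mathbb{Z}_{\geq 0}$.

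Equating the two expressions yields $1+(\kappa/N)d_+ = b + a\,d_+$, and $\mathbb{Q}$-linear independence of $\{1,d_+\}$ forces $b=1$ and $a=\kappa/N$. Since $a$ is a nonnegative integer, $N\mid\kappa$, completing the proof. No step is much of an obstacle: the crux is recognizing that the specific form of $\mathrm{FPdim}(A_{\chi_-})$ already produced in Equation~(\ref{thurteen}) encodes the desired divisibility, once one pairs it with the fact that the only non-integer dimension among simple objects of $\mathcal{C}$ is $d_+$ itself.
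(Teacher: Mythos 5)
Your argument is correct and is essentially the paper's own proof: both pass to the simple object $A_{\chi_-}\subset I(\mathbbm{1})$ in $\mathcal{Z}(\mathcal{C})$, use Equation~(\ref{thurteen}) for its dimension, and extract $\kappa/N$ as the multiplicity of $\rho$ in $F(A_{\chi_-})$ via the fact that $\rho$ is the only simple object of irrational dimension. The only cosmetic difference is that you derive the multiplicity of $\mathbbm{1}$ from $\mathbb{Q}$-linear independence of $\{1,d_+\}$ rather than quoting $[\mathbbm{1},A_{\chi_-}]=\dim(\chi_-)=1$ directly.
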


\begin{proof}
By Equation (\ref{thurteen}), there exists a simple object $A_{\chi_-}$ in $\mathcal{Z}(\mathcal{C})$ with $F(A_{\chi_-})=\mathbbm{1}\oplus Y$ for some $Y\in\mathcal{C}$ with $\dim(Y)=(\kappa/N)d_+$.  If $d_+\not\in\mathbb{Z}$, this implies $F(A_{\chi_-})=\mathbbm{1}\oplus(\kappa/N)\rho$, hence $N\mid\kappa$.
\end{proof}

\begin{proposition}\label{propcomm}
Let $\mathcal{C}$ be a near-integral fusion category.  If $\mathrm{FPdim}(\mathcal{C})\not\in\mathbb{Z}$, then the fusion rules of $\mathcal{C}$ are commutative.
\end{proposition}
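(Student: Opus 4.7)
The hypothesis $\mathrm{FPdim}(\mathcal{C})\not\in\mathbb{Z}$, combined with the identity $\mathrm{FPdim}(\mathcal{C}) = 2N + \kappa d_+$ for integers $N$ and $\kappa$, forces both $d_+$ irrational (otherwise $\mathrm{FPdim}(\mathcal{C})\in\mathbb{Z}$) and $\kappa>0$ (otherwise $\mathrm{FPdim}(\mathcal{C})=2N\in\mathbb{Z}$), so Lemma \ref{lem:divides} provides $m:=\kappa/N\in\mathbb{Z}_{\geq 1}$. Since commutativity of the Grothendieck ring is intrinsic to the fusion ring and therefore preserved under Galois conjugation, Lemma \ref{cor:1} lets me assume without loss of generality that $\mathcal{C}$ is pseudounitary, so that $\dim=\mathrm{FPdim}$ on all simple objects and the modular category $\mathcal{Z}(\mathcal{C})$ is pseudounitary as well.

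The centerpiece of the argument is the simple $A_{\chi_-}\in\mathcal{Z}(\mathcal{C})$ from Proposition \ref{prop:char}, for which the proof of Lemma \ref{lem:divides} produces the explicit decomposition $F(A_{\chi_-})=\mathbbm{1}\oplus m\rho$. Because $A_{\chi_-}$ carries a half-braiding, for every $Y\in\mathcal{O}(\mathcal{C})$ one obtains a natural isomorphism $F(A_{\chi_-})\otimes Y\cong Y\otimes F(A_{\chi_-})$, which at the Grothendieck level amounts to $[\rho][Y]=[Y][\rho]$. This centrality of $\rho$ is, however, already automatic from the fusion rules $\rho\otimes x\cong x\otimes\rho\cong\mathrm{FPdim}(x)\rho$ derived in Section \ref{subsec:def}, so the half-braiding on $A_{\chi_-}$ alone cannot yet impose nontrivial commutativity on the integral subring $S$.

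To extract commutativity of $S$, I would analyze the remaining simples $A_{\tilde\varphi}\in\mathcal{Z}(\mathcal{C})$ coming from Corollary \ref{cor:form}, indexed by $\varphi\in\mathrm{Irr}(S)\setminus\{\mathrm{FPdim}\}$, each with $\mathrm{FPdim}(A_{\tilde\varphi})=(2N+mNd_+)/f_\varphi$ and with $\mathbbm{1}$-multiplicity $\dim(\varphi)$ in $F(A_{\tilde\varphi})$. Since $d_+\not\in\mathbb{Q}$, separating rational and $d_+$-components of $\mathrm{FPdim}(A_{\tilde\varphi})$ forces a decomposition $F(A_{\tilde\varphi})=\dim(\varphi)\mathbbm{1}\oplus X_\varphi\oplus(mN/f_\varphi)\rho$, where $X_\varphi\in\mathcal{D}$ has Frobenius-Perron dimension $2N/f_\varphi-\dim(\varphi)$ and, as the $S$-part of a central image of $F$, must lie in the center of $S$. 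The main obstacle will be ruling out $\dim(\varphi)>1$ for any $\varphi$; my intended strategy is to assume otherwise and then to play the rigid Verlinde arithmetic of $\mathcal{Z}(\mathcal{C})$ against the forced dimensions $1+md_+$ and $(2N+mNd_+)/f_\varphi$, using that every $A_\varphi$ in $I(\mathbbm{1})$ has trivial twist by \cite[Proposition 3.11]{MR4655273}, to obtain a contradiction from the decomposition of $A_{\chi_-}\otimes A_{\tilde\varphi}$. Eliminating all $\dim(\varphi)>1$ makes every irreducible representation of $S$ one-dimensional, so $S$ and consequently $R(S,\kappa)$ are commutative.
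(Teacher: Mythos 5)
Your setup coincides with the paper's own: reduce to the pseudounitary case via Lemma \ref{cor:1} (commutativity being a property of the Grothendieck ring, hence Galois-invariant), write $\kappa=kN$ using Lemma \ref{lem:divides}, and study the simple summands $A_{\tilde\varphi}\subset I(\mathbbm{1})$ indexed by $\varphi\in\mathrm{Irr}(\mathcal{D})\setminus\{\mathrm{FPdim}\}$, whose forgetful images decompose as $\dim(\varphi)\mathbbm{1}\oplus X_\varphi\oplus(kN/f_\varphi)\rho$ by separating the rational and $d_+$-parts of $\dim(\mathcal{C})/f_\varphi$. You also correctly identify that the entire proposition reduces to showing $\dim(\varphi)=1$ for every $\varphi$. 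But that is exactly where your argument stops: the passage beginning ``my intended strategy is to assume otherwise\dots'' is a plan, not a proof. You give no computation of $A_{\chi_-}\otimes A_{\tilde\varphi}$, do not say which Verlinde or twist constraint would be violated, and offer no evidence that a contradiction actually materializes along that route. Since forcing every $\dim(\varphi)$ to equal $1$ is the whole content of the statement (everything preceding it, as you yourself observe, only recovers the automatic centrality of $\rho$), the proposal as written contains a genuine gap.

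The paper closes this gap by a global counting argument on $I(\mathbbm{1})$ rather than by analyzing individual tensor products in $\mathcal{Z}(\mathcal{C})$. One computes $[F(I(\mathbbm{1})),\rho]$ in two ways: from $F(I(\mathbbm{1}))=\bigoplus_Y Y\otimes Y^\ast$ the only contribution is $[\rho\otimes\rho,\rho]=\kappa=kN$; from the decomposition of $I(\mathbbm{1})$ into the $A_\psi$ one gets $k\bigl(1+\sum_{\varphi\neq\mathrm{FPdim}}[I(\mathbbm{1}),A_\varphi]\,N/f_\varphi\bigr)$, using $[F(A_{\chi_-}),\rho]=k$ and $[F(A_{\tilde\varphi}),\rho]=kN/f_\varphi$. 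The identity $\sum_{\varphi\in\mathrm{Irr}(\mathcal{D})}\dim(\varphi)/f_\varphi=1$ of \cite[Proposition 2.10]{ost15}, together with $f_\varphi\mid N$, bounds $\sum_{\varphi\neq\mathrm{FPdim}}N/f_\varphi$ by $N-1$ with equality precisely when every $\dim(\varphi)=1$; matching the two computations of $kN$ then pins down $\dim(\varphi)=[I(\mathbbm{1}),A_\varphi]=1$ for all $\varphi\neq\mathrm{FPdim}$, so $\mathcal{D}$ and hence $\mathcal{C}$ are commutative. If you want to salvage your write-up, replacing the speculative $A_{\chi_-}\otimes A_{\tilde\varphi}$ step with this codegree count is the missing ingredient.
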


\begin{proof}
Assume $\mathcal{C}:=\mathcal{C}(\mathcal{D},\kappa)$.  As $\mathrm{FPdim}(\mathcal{C})\not\in\mathbb{Z}$, there exists $k\in\mathbb{Z}_{\geq1}$ such that $\kappa=kN$ where $N:=\mathrm{FPdim}(\mathcal{D})$.  We may assume without loss of generality that $\mathcal{C}$ is pseudounitary, hence we may equip $\mathcal{C}$ with its canonical spherical structure such that $\mathrm{FPdim}=\dim$.  Let $\varphi\in\mathrm{Irr}(\mathcal{D})$ such that $\varphi\neq\mathrm{FPdim}$.  By Proposition \ref{prop:char2} we may consider $\varphi$ as an irreducible representation of $\mathcal{C}$ via extending $\varphi$ to $\rho$ by zero.  The formal codegree $f_\varphi$ is a positive integer by \cite[Corollary 2.15]{ost15} which divides $N$, hence there exist positive integers $g_\varphi:=N/f_\varphi$ and simple objects $A_\varphi\in\mathcal{O}(\mathcal{Z}(\mathcal{C}))$ in the induction $I(\mathbbm{1})$ with
\begin{equation}
\dim(A_\varphi)=\dfrac{\dim(\mathcal{C})}{f_\varphi}=g_\varphi\dfrac{\dim(\mathcal{C})}{N}=g_\varphi\left(2+kd_+\right)
\end{equation}
by Equation (\ref{furfteen}).  Hence $[F(A_\varphi),\rho]=kg_\varphi$  Recall \cite[Proposition 2.10]{ost15} implies 
\begin{equation}
\sum_{\varphi\in\mathrm{Irr}(\mathcal{D})}g_\varphi=\sum_{\varphi\in\mathrm{Irr}(\mathcal{D})}\dfrac{N}{f_\varphi}\leq N\sum_{\varphi\in\mathrm{Irr}(\mathcal{D})}\dfrac{\dim(\varphi)}{f_\varphi}=N,
\end{equation}
hence $\sum_{\varphi\neq\mathrm{FPdim}}g_\varphi\leq N-1$ with equality if and only if $\dim(\varphi)=1$ for all $\varphi\neq\mathrm{FPdim}$.   The simple objects $A_{\chi_\pm}$ are the only other simple summands of $I(\mathbbm{1})$ not of the form $A_\varphi$ for some $\varphi\in\mathrm{Irr}(\mathcal{D})$ with $\varphi\neq\mathrm{FPdim}$, hence we have
\begin{align}
kN=[F(I(\mathbbm{1})),\rho]&=[F(A_{\chi_+}\oplus A_{\chi_-}),\rho]+\sum_{\varphi\neq\mathrm{FPdim}}[I(\mathbbm{1}),A_\varphi][F(A_\varphi),\rho] \\
&=k\left(1+\sum_{\varphi\neq\mathrm{FPdim}}[I(\mathbbm{1}),A_\varphi]\dfrac{N}{f_\varphi}\right) \\
&\geq k(1+ (N-1)) \\
&=kN
\end{align}
with equality if and only if $\dim(\varphi)=[\mathbbm{1},F(A_\varphi)]=[I(\mathbbm{1}),A_\varphi]=1$ for all $\varphi\neq\mathrm{FPdim}$, thus $\mathcal{D}$ is commutative and moreover $\mathcal{C}$ is commutative as well.
\end{proof}


\subsection{Braided near-integral fusion categories}\label{sec:brad}

\par Braided near-group fusion categories, i.e.\ those for which $\mathcal{D}$ is pointed, have been explicitly described.  The main reason a complete classification is possible is that there is a very succinct description of symmetrically braided near-group fusion categories, which is to say finite groups $G$ having exactly one nonlinear irreducible character \cite{MR222160}.  Subsequently, braidings on $C_2$-graded near-group categories were classified in \cite{siehler} (see also \cite[Corollary 4.10]{MR4484228}).  Both of these contain infinite families of examples, while there are exactly $7$ braided equivalence classes of braided near-group fusion categories which are neither symmetrically braided nor nontrivially graded \cite[Theorem III.4.6]{thornton2012generalized}.

\par Braided near-integral fusion categories are more numerous.  Those which are symmetrically braided were first studied in \cite{MR721927} under the guise of finite groups with a character vanishing on all but $2$ conjugacy classes.  Proposition \ref{prop:char2} and Theorem \ref{thm:char} imply near-integral fusion rings are a true generalization of this definition.  Here we provide a diverse collection of examples.

\begin{example}\label{ex:exspecial}
Let $p\in\mathbb{Z}_{\geq2}$ and $G$ be a finite $p$-group.  The group $G$ is \emph{extraspecial} if $|Z(G)|=p$ and $G/Z(G)$ is a nontrivial elementary abelian $p$-group.  For each prime $p$ and positive integer $n$, there exist exactly two isomorphism classes of extraspecial $p$-groups of order $p^{2n+1}$, traditionally denoted $p_\pm^{1+2n}$.  For the duration, let $p$ be an odd prime, in which case $p_-^{1+2n}$ is distinguished as having elements of order $p^2$, while $p_+^{1+2n}$ has only elements of order $p$.

\par The example of $\mathrm{Aut}(D_9)$ in Figure \ref{fig:rep2} can be generalized by considering $\mathrm{Aut}(D_9)$ as the semidirect product $3_-^{1+2}\rtimes C_2$.  To this end, let $G$ be an extraspecial $p$-group for an odd prime $p$ and $z\in Z(G)$ be a generator.  The group $G$ can be realized by generators $x_1,\ldots,x_{2n}$ subject to the relations
\begin{align}
[x_{2i-1},x_{2i}]&=z&&\text{for }1\leq i\leq n \\
[x_i,x_j]&=1&&\text{for }1\leq i,j\leq n\text{ and }|i-j|>1,\text{ and } \\
x_i^p&\in Z(G)&&\text{for }1\leq i\leq2n.
\end{align}
Now let $k\in\mathbb{Z}$ with $1<k<p$.  We define a map $\theta:G\to G$ by $\theta(z)=z^k$, $\theta(x_{2i-1})=x_{2i-1}^k$, and $\theta(x_{2i})=x_{2i}$ for $1\leq i\leq n$.  It is straightforward to verify that $\theta\in\mathrm{Aut}(G)$ and $\theta$ has order $p-1$.  Moreover, $\mathrm{Aut}(G)\cong H\rtimes\langle\theta\rangle$ where $H$ is the subgroup of $\mathrm{Aut}(p_\pm^{1+2n})$ fixing $Z(p_\pm^{1+2n})$ pointwise.

\par Now recall that the isomorphism classes of irreducible representations of $p_\pm^{1+2n}$ consist of $2n$ one-dimensional representations which act trivially on $Z(G)$, and $p-1$ representations of dimension $p^n$ which are distinguished by their action on $Z(G)$.  In particular, $\langle\theta\rangle$ gives a well-defined $C_{p-1}$-action on $\mathrm{Rep}(G)$ by braided autoequivalences (refer to \cite[Section 2.3]{MR3390789}, for example).  The equivariantization $\mathrm{Rep}(G)^{C_{p-1}}$ has a unique simple object $\rho$ with $\dim(\rho)=p^n(p-1)$ while all other simple objects $X$ have $\dim(X)\leq p-1$, hence $\dim(X)^2\leq(p-1)^2<p(p-1)\leq\dim(\rho)$.  Moreover $\mathrm{Rep}(G)^{C_{p-1}}$ has a maximal fusion subcategory.  We compute
\begin{equation}
\kappa=2p^n(p-1)-\dfrac{p^{2n+1}(p-1)}{p^n(p-1)}=p^n(p-2),
\end{equation}
while the maximal fusion subcategory has dimension $N=p^{2n}(p-1)$.
\end{example}

\begin{note}
Note that one can see the above examples as a generalization of the \emph{near-group} braided fusion categories $\mathrm{Rep}(F_p)$ where $F_p$ is the Frobenius group of order $p(p-1)$.  In particular, one can view $C_p$ as the degenerate extra-special $p$-group where $n=0$.
\end{note}

\begin{note}
Note also that these examples are finite groups with the (relatively) largest possible irreducible character degrees.  In particular, if $G$ is a nontrivial finite group and $d$ is the degree of an irreducible character of $G$, then $|G|=d(d+e)$ for some $e\in\mathbb{Z}_{\geq1}$ \cite[Theorem 1.1]{MR3458181}.  It is known that $|G|\leq e^3(e-1)$, and the semidirect products above achieve this maximal bound.
\end{note}

To study nonsymmetrically braided near-integral fusion categories, there are two fundamentally distinct cases.  We show in Proposition \ref{prop:two} that if $\mathcal{C}$ is not symmetrically braided, $C_\mathcal{C}(\mathcal{C})$ is the maximal Tannakian subcategory of $\mathcal{C}$ (and hence $\mathcal{D}$) which is equal to $C_\mathcal{D}(\mathcal{D})$ or an index-$2$ fusion subcategory thereof.  Recall the relevant information from Sections \ref{sec:deeq} and \ref{sec:brad} pertaining to modular data, twists, and the equivariantization/de-equivariantization constructions.

\begin{lemma}\label{lem:one}
Let $\mathcal{C}$ be a braided near-integral fusion category and $\mathcal{E}$ a Tannakian subcategory.  Then $\mathcal{E}\subset C_\mathcal{C}(\mathcal{C})$.
\end{lemma}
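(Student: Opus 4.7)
The plan is to reduce the claim to a single centralization computation, leveraging the structural observation that any fusion subcategory of a near-integral $\mathcal{C}$ which contains the distinguished object $\rho$ must equal all of $\mathcal{C}$ --- this is because the fusion rule $\rho^2 = \kappa\rho + \sum_{x \in C}\mathrm{FPdim}(x)\,x$ (derived exactly as in Section \ref{subsec:def}) exhibits every simple object of $\mathcal{D}$ with strictly positive multiplicity. I would first dispense with the case $\rho \in \mathcal{E}$: this observation forces $\mathcal{E} = \mathcal{C}$, so $\mathcal{C}$ is itself Tannakian and hence symmetric, giving $C_\mathcal{C}(\mathcal{C}) = \mathcal{C} \supseteq \mathcal{E}$. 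So I may assume $\rho \notin \mathcal{E}$, i.e.\ $\mathcal{E} \subseteq \mathcal{D}$.

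The core step is then to show $\rho \in C_\mathcal{C}(\mathcal{E})$. Equip $\mathcal{C}$ with the canonical spherical structure from Lemma \ref{cor:1}, so that every simple object $Y$ in the Tannakian subcategory $\mathcal{E}$ has twist $\theta_Y = 1$. For such $Y$, the near-integral fusion rule gives $\rho \otimes Y \cong \mathrm{FPdim}(Y)\,\rho$, so $\rho$ is the unique simple summand with multiplicity $\mathrm{FPdim}(Y)$. The balancing equation then yields
\begin{equation*}
s_{\rho,Y} \;=\; \sum_Z N_{\rho,Y}^Z \frac{\theta_Z\,\dim(Z)}{\theta_\rho\,\theta_Y} \;=\; \mathrm{FPdim}(Y)\cdot\frac{\theta_\rho\,\dim(\rho)}{\theta_\rho\cdot 1} \;=\; \dim(\rho)\dim(Y),
\end{equation*}
which is the equality case of the standard bound $|s_{\rho,Y}| \leq \dim(\rho)\dim(Y)$ and hence forces $\rho$ and $Y$ to centralize by M\"uger's criterion.

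To conclude: $\mathcal{E}$ being symmetric gives $\mathcal{E} \subseteq C_\mathcal{C}(\mathcal{E})$, and combined with $\rho \in C_\mathcal{C}(\mathcal{E})$, the fusion subcategory $C_\mathcal{C}(\mathcal{E})$ contains $\rho$; the structural observation then forces $C_\mathcal{C}(\mathcal{E}) = \mathcal{C}$, which is exactly the assertion $\mathcal{E} \subseteq C_\mathcal{C}(\mathcal{C})$. The only delicate point I anticipate is verifying that the twists on $\mathcal{E}$, computed with the canonical spherical structure on $\mathcal{C}$ from Lemma \ref{cor:1}, really agree with the internal Tannakian twists of $\mathcal{E}$ (i.e.\ $\theta_Y = 1$); this reduces to the compatibility of canonical spherical structures, possibly after first passing to a pseudounitary Galois-conjugate representative where the M\"uger bound is immediate.
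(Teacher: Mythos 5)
Your proposal is correct and follows essentially the same route as the paper: restrict to the pseudounitary Galois conjugate, use the fusion rule $Y\otimes\rho\cong\mathrm{FPdim}(Y)\,\rho$ together with trivial twists on the Tannakian subcategory to get $S_{Y,\rho}=\dim(Y)\dim(\rho)$ from the balancing equation, invoke M\"uger's centralization criterion, and conclude from the fact that $\rho$ $\otimes$-generates $\mathcal{C}$. The two points you flag as delicate are handled in the paper by citation (trivial twists on Tannakian subcategories via \cite[Proposition 3.11]{MR4655273}, and the equality criterion via \cite[Proposition 2.5]{mug1} under pseudounitarity), and your extra case $\rho\in\mathcal{E}$ is a harmless refinement the paper subsumes into ``$\rho$ generates $\mathcal{C}$.''
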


\begin{proof}
We may assume by Lemma \ref{cor:1} that $\mathcal{C}$ is pseudounitary, equipped with its canonical spherical structure.  Let $\mathcal{E}\subset\mathcal{C}$ be any Tannakian subcategory.  We have $\theta_X=1$ for all $X\in\mathcal{O}(\mathcal{E})$ \cite[Proposition 3.11]{MR4655273}, hence by the balancing equation \cite[Proposition 8.13.8]{tcat},
\begin{equation}
S_{X,\rho}=\theta_X^{-1}\theta_\rho^{-1}\sum_{Y\in\mathcal{O}(\mathcal{C})}N_{X,\rho}^Y\dim(Y)\theta_Y=\dim(X)\dim(\rho).
\end{equation}
As $\mathcal{C}$ is pseudounitary, \cite[Proposition 2.5]{mug1} implies $\rho\in C_\mathcal{C}(\mathcal{E})$, hence $C_\mathcal{C}(\mathcal{E})=\mathcal{C}$ since $\rho$ $\otimes$-generates $\mathcal{C}$.  Moreover $\mathcal{E}\subset C_\mathcal{C}(\mathcal{C})$.
\end{proof}

\begin{proposition}\label{prop:two}
Let $\mathcal{C}$ be a braided near-integral fusion category which is not symmetrically braided.  Then $C_\mathcal{C}(\mathcal{C})$ is the maximal Tannakian subcategory of $\mathcal{C}$.
\end{proposition}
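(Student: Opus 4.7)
By Lemma \ref{lem:one}, every Tannakian subcategory of $\mathcal{C}$ is already contained in the symmetric center $C_\mathcal{C}(\mathcal{C})$, so the maximality assertion reduces to showing that $C_\mathcal{C}(\mathcal{C})$ is itself Tannakian. The symmetric center is a symmetric fusion category, hence by Deligne's theorem is braided equivalent to $\mathrm{Rep}(G,\nu)$ for some finite group $G$ and central $\nu\in G$ with $\nu^2=e$ (see Example \ref{exsynm}); the plan is to rule out a nontrivial $\nu$.

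The first step is to observe that $C_\mathcal{C}(\mathcal{C})\subseteq\mathcal{D}$. The fusion rule $\rho^2=\kappa\rho+\sum_{x\in C}x$ shows that $\rho$ tensor-generates $\mathcal{C}$, so if $\rho$ belonged to the symmetric center then $C_\mathcal{C}(\mathcal{C})=\mathcal{C}$, contradicting the hypothesis that $\mathcal{C}$ is not symmetrically braided. Therefore every simple object of $C_\mathcal{C}(\mathcal{C})$ lies in the integral fusion subcategory $\mathcal{D}$.

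Now suppose for contradiction that $C_\mathcal{C}(\mathcal{C})$ is super-Tannakian with distinguished invertible object $\epsilon\in C_\mathcal{C}(\mathcal{C})\subseteq\mathcal{D}$ satisfying $\epsilon\otimes\epsilon\cong\mathbbm{1}$ and twist $\theta_\epsilon=-1$. Since $\epsilon$ is invertible, $\epsilon\otimes\rho$ is simple, and it cannot lie in $\mathcal{D}$ (else $\rho\cong\epsilon^{-1}\otimes(\epsilon\otimes\rho)\in\mathcal{D}$). As $\rho$ is the unique simple object of $\mathcal{C}$ outside $\mathcal{D}$, we conclude $\epsilon\otimes\rho\cong\rho$. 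Because $\epsilon$ centralizes $\rho$, the monoidal twist identity collapses to $\theta_{\epsilon\otimes\rho}=\theta_\epsilon\theta_\rho$; identifying $\epsilon\otimes\rho$ with $\rho$ yields $\theta_\rho=-\theta_\rho$, which is impossible since $\theta_\rho$ is a root of unity. Hence $\nu$ must be trivial and $C_\mathcal{C}(\mathcal{C})$ is Tannakian.

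The key subtlety in this plan is the twist calculation in the last paragraph: it is precisely the centralization between $\epsilon$ and $\rho$ that upgrades the generic braided twist formula $\theta_{X\otimes Y}=(\theta_X\otimes\theta_Y)\circ c_{Y,X}\circ c_{X,Y}$ to the scalar identity used above, converting the super-Tannakian hypothesis $\theta_\epsilon=-1$ into an immediate contradiction. Everything else is bookkeeping driven by the structure of $R(S,\kappa)$, in particular that $\rho$ is the unique simple object outside the integral maximal subcategory $\mathcal{D}$.
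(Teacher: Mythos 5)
Your overall strategy is sound and partially overlaps with the paper's: the reduction via Lemma \ref{lem:one} to showing that $C_\mathcal{C}(\mathcal{C})$ is Tannakian is correct, the containment $C_\mathcal{C}(\mathcal{C})\subseteq\mathcal{D}$ (because $\rho$ $\otimes$-generates $\mathcal{C}$) is correct, and the identity $\epsilon\otimes\rho\cong\rho$ for invertible $\epsilon\in\mathcal{D}$ together with the balancing equation does force $\theta_\epsilon=1$ for every invertible object of the symmetric center. The gap is in the step where you produce the object $\epsilon$. A super-Tannakian category $\mathrm{Rep}(G,\nu)$ need not contain an invertible object of twist $-1$: the element $\nu$ lives in the group $G$, not in the category, and the invertible objects of $\mathrm{Rep}(G,\nu)$ are the linear characters $\chi$ of $G$, with twist $\chi(\nu)$. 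A character with $\chi(\nu)=-1$ exists if and only if $\nu\notin[G,G]$. For instance $\mathrm{Rep}(Q_8,\nu)$ with $\nu$ the unique element of order $2$ is super-Tannakian (its two-dimensional simple object has twist $-1$), yet every invertible object has twist $+1$ because $\nu\in[Q_8,Q_8]$. So your argument only excludes symmetric centers containing $\mathrm{sVec}$ as a fusion subcategory, not all super-Tannakian ones, and the remaining case is exactly the delicate one.

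The paper closes that case by a different, global version of your computation. Writing $\mathcal{E}$ for the maximal Tannakian subcategory of $C_\mathcal{C}(\mathcal{C})$, which by Deligne's theorem always exists and has index $2$ when $C_\mathcal{C}(\mathcal{C})$ is super-Tannakian, and $A$ for its regular algebra, the de-equivariantizations $\mathcal{C}_A$ and $\mathcal{D}_A$ are both supermodular, so both Gauss sums vanish; but $\tau_1^+(\mathcal{C}_A)=\tau_1^+(\mathcal{D}_A)+\theta_\rho\sum_M\dim(M)^2$, the sum running over the simple summands $M$ of the local free module $\rho\otimes A$, which yields $0=\theta_\rho\cdot(\text{positive})$, a contradiction. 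This is the correct generalization of your twist identity (both arguments exploit that $\rho$ is the unique simple object outside $\mathcal{D}$ and that $\theta_\rho$ is a nonzero root of unity), and it is the step you would need to supply to repair the proof.
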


\begin{proof}
If $\mathcal{C}$ is not symmetrically braided, then $C_\mathcal{C}(\mathcal{C})\subset C_\mathcal{D}(\mathcal{D})$.  So if $C_\mathcal{D}(\mathcal{D})$ is Tannakian, then $C_\mathcal{C}(\mathcal{C})=C_\mathcal{D}(\mathcal{D})$ is Tannakian and the result follows from Lemma \ref{lem:one}.  If $C_\mathcal{D}(\mathcal{D})$ is not Tannakian, $C_\mathcal{D}(\mathcal{D})$ contains a maximal Tannakian subcategory $\mathcal{E}$ with $\dim(\mathcal{E})=(1/2)\dim(C_\mathcal{D}(\mathcal{D}))$ and as $\mathcal{E}\subset C_\mathcal{C}(\mathcal{C})$ by Lemma \ref{lem:one}, either $\mathcal{E}=C_\mathcal{C}(\mathcal{C})$ and we are done, or $C_\mathcal{C}(\mathcal{C})=C_\mathcal{D}(\mathcal{D})$ are both super-Tannakian.

\par To show the latter cannot occur, let $A\in\mathcal{E}$ be the regular algebra.  Note that the free $A$-module $\rho\otimes A$ is local and that $\mathcal{E}$ is the maximal Tannakian subcategory of both $\mathcal{C}$ and $\mathcal{D}$, so the de-equivariantizations $\mathcal{C}^0_A=\mathcal{C}_A$ and $\mathcal{D}^0_A=\mathcal{D}_A$ are both supermodular \cite[Sections 5.5--5.6]{DGNO}.  In particular, their Gauss sums \cite[Section 8.15]{tcat} vanish.  But we have
\begin{equation}
0=\tau_1^+(\mathcal{C}_A)=\tau_1^+(\mathcal{D}_A)+\theta_\rho\sum_{\substack{\text{simple }M\\M\subset\rho\otimes A}}\dim(M)^2=\theta_\rho\sum_{\substack{\text{simple }M\\M\subset\rho\otimes A}}\dim(M)^2,
\end{equation}
which cannot occur since $\theta_\rho$ is a root of unity and the free $A$-module $\rho\otimes A$ has at least one simple summand.
\end{proof}

\par We now demonstrate there is exactly one such braided near-integral fusion category $\mathcal{C}$ with $\mathrm{FPdim}(\mathcal{C})\not\in\mathbb{Z}$.

\begin{proposition}\label{prop:fib}
Let $\mathcal{C}$ be a braided near-integral fusion category.  If $\dim(\mathcal{C})\not\in\mathbb{Z}$, then $\mathcal{C}\simeq\mathcal{C}(A_1,5,q)_\mathrm{ad}$ is a braided equivalence where $q^2$ is a primitive $5$th root of unity.
\end{proposition}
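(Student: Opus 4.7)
First I would equip $\mathcal{C}$ with its canonical spherical structure via Lemma \ref{cor:1}, so $\dim=\mathrm{FPdim}$; Proposition \ref{propcomm} then gives commutative fusion rules, and Lemma \ref{lem:divides} gives $\kappa=kN$ for some integer $k\geq 0$. Since $\dim(\mathcal{C})=2N+kNd_+\notin\mathbb{Z}$, necessarily $k\geq 1$ and $d_+$ is irrational. Because every symmetric fusion category $\mathrm{Rep}(G,\nu)$ has integer total dimension, $\mathcal{C}$ is not symmetrically braided, and Proposition \ref{prop:two} yields a Tannakian $\mathcal{E}:=C_\mathcal{C}(\mathcal{C})\simeq\mathrm{Rep}(G)\subseteq\mathcal{D}$.

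The first key step is to prove $\mathcal{D}=\mathcal{E}$. For any simple $x\in\mathcal{D}$, Frobenius reciprocity applied to $c_{\rho,\rho}^x=\dim(x)$ together with the dimension count $\dim(\rho\otimes x)=\dim(x)d_+$ forces $\rho\otimes x=\dim(x)\rho$; the balancing equation then gives $S_{\rho,x}=\theta_x^{-1}\dim(\rho)\dim(x)$. Hence $\rho\in C_\mathcal{C}(\mathcal{D})$ iff $\theta_x=1$ for every simple $x\in\mathcal{D}$, which by the double-twist identity $\theta_{X\otimes Y}=\theta_X\theta_Y\sigma_{Y,X}\sigma_{X,Y}$ is equivalent to $\mathcal{D}$ being Tannakian. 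The Müger centralizer formula $\dim(\mathcal{D})\dim(C_\mathcal{C}(\mathcal{D}))=\dim(\mathcal{C})\dim(\mathcal{D}\cap\mathcal{E})$ applied with $\mathcal{D}\cap\mathcal{E}=\mathcal{E}$ yields
\[
\dim(C_\mathcal{C}(\mathcal{D}))=|G|(2+kd_+)\notin\mathbb{Z},
\]
so $C_\mathcal{C}(\mathcal{D})\not\subseteq\mathcal{D}$; consequently $\rho\in C_\mathcal{C}(\mathcal{D})$, $\mathcal{D}$ is Tannakian, and so $\mathcal{D}=\mathcal{E}$, $N=|G|$.

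The second key step is to prove $|G|=1$. De-equivariantizing by $\mathcal{E}=\mathcal{D}$ produces a nondegenerate braided fusion category $\mathcal{C}_G$ of dimension $2+kd_+$ with $\mathcal{D}_G\simeq\mathrm{Vec}$. Since $\rho$ is the unique non-$\mathcal{D}$ simple of $\mathcal{C}$, all non-unit simples of $\mathcal{C}_G$ lie in a single $G$-orbit of some size $|O|$ whose stabilizer $H$ carries a unique projective irrep of dimension $d$, necessarily with $d^2=|H|$. Writing $Y\otimes Y=a\mathbbm{1}+\sum_j b_jY_j$ for $Y$ in this orbit and using $\dim(Y)=d_+/(|O|d)$ together with $d_+^2=k|G|d_++|G|$, the resulting dimensional identity splits by rational versus irrational parts to give $\sum_j b_j=kd$ and $a|O|=1$; positivity forces $|O|=1$ and $a=1$, so $\mathcal{C}_G$ has rank $2$ with $Y^2=\mathbbm{1}+kdY$.

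Ostrik's classification of rank $2$ fusion categories \cite{ostrik} permits only multiplicities $0$ or $1$ on the non-unit simple, so $kd\leq 1$; combined with $k\geq 1$ and $d\geq 1$ this forces $k=d=1$, hence $|H|=|G|=1$ and $\mathcal{C}=\mathcal{C}_G$ has Fibonacci fusion rules. Example \ref{exquant} then identifies $\mathcal{C}\simeq\mathcal{C}(A_1,5,q)_\mathrm{ad}$ for some primitive 5th root of unity $q^2$. The main obstacle I foresee is extracting both the Müger-based exclusion of $\mathcal{D}\supsetneq\mathcal{E}$ and the fusion-rule-based exclusion of $|G|>1$ from the single relation $d_+^2=kNd_++N$; in each case the key is to balance its rational and irrational parts against the integer data on the other side.
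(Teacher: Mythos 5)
Your argument is correct, but after the common setup (pseudounitarity via Lemma \ref{cor:1}, $\kappa=kN$ with $k\geq1$ via Lemma \ref{lem:divides}, and $\mathcal{D}=C_\mathcal{C}(\mathcal{C})$ Tannakian) it takes a genuinely different route to force $k=N=1$. The paper gets $C_\mathcal{C}(\mathcal{C})=\mathcal{D}$ by an external citation together with Proposition \ref{prop:two}, then applies a Galois automorphism sending $d_+$ to $d_-$ to the modularization $\mathcal{C}_A$ and invokes a lower bound on global dimensions of fusion categories to conclude $(1+kd_+)^{-1}>1/3$, whence $k=1$ and then $N=1$. You instead (a) rederive $\mathcal{D}=C_\mathcal{C}(\mathcal{C})$ from the centralizer dimension formula $\dim(\mathcal{D})\dim(C_\mathcal{C}(\mathcal{D}))=\dim(\mathcal{C})\dim(\mathcal{D}\cap C_\mathcal{C}(\mathcal{C}))$ of Drinfeld--Gelaki--Nikshych--Ostrik plus irrationality of $|G|(2+kd_+)$ --- a pleasant self-contained substitute for the paper's citation --- and (b) replace the Galois/dimension-bound step by combinatorics of the de-equivariantization: the non-unit simples of $\mathcal{C}_G$ form a single $G$-orbit, and orbit--stabilizer together with $d^2=|H|$ and the splitting of $\dim(Y)^2=a+(\sum_j b_j)\dim(Y)$ into rational and irrational parts yields $a|O|=1$, hence $|O|=1$ and rank $2$ with $Y^{\otimes2}=\mathbbm{1}\oplus kd\,Y$, after which Ostrik's rank-$2$ classification kills $kd\geq2$. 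Both proofs terminate in \cite{ostrik}; yours trades the global-dimension lower bound for more detailed equivariantization bookkeeping. Two small points to make explicit if you write this up: the coefficient matching in step (b) needs $1$ and $\dim(Y)$ to be $\mathbb{Q}$-linearly independent (true since $d_+$ is irrational), and the implication from $\theta_x=1$ on all simples of $\mathcal{D}$ to $\mathcal{D}$ Tannakian should go through the balancing equation showing $\mathcal{D}$ is symmetric before identifying it as $\mathrm{Rep}(G)$ rather than $\mathrm{Rep}(G,\nu)$ with $\nu$ nontrivial.
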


\begin{proof}
The assumption $\dim(\mathcal{C})\not\in\mathbb{Z}$ implies $\kappa=kN$ for some $k\in\mathbb{Z}_{\geq1}$ by Lemma \ref{lem:divides} and $C_\mathcal{C}(\mathcal{C})=\mathcal{D}$ by \cite[Lemma 5.2]{MR4413278}.  The remainder follows the proof of \cite[Lemma 2]{MR4658217}.  Proposition \ref{prop:two} states that $C_\mathcal{C}(\mathcal{C})=\mathcal{D}$ is Tannakian with regular algebra $A$, assume $\mathcal{C}$ is pseudounitary without loss of generality by Lemma \ref{cor:1} and let $\sigma\in\mathrm{Gal}(\overline{\mathbb{Q}}/\mathbb{Q})$ be such that $\sigma(d_+)=d_-$.  We compute
\begin{equation}
\dim(\mathcal{C}_A^\sigma)=\sigma(\dim(\mathcal{C}_A))=1-\dfrac{d_-}{d+}=1+\dfrac{N}{d_+^2}=1+\dfrac{1}{1+kd_+}.
\end{equation}
We must have $(1+kd_+)^{-1}>1/3$ \cite[Theorem 4.1.1]{MR3943751}.  This implies $k=1$ since $d_+>1$, hence $d_+=(1/2)(N+\sqrt{N(N+4)})<2$ which implies $N=1$.  Moreover $\mathcal{C}$ is rank $2$ and the result follows \cite{ostrik}.
\end{proof}

 For the remainder of the section we will assume that $\mathrm{FPdim}(\mathcal{C})\in\mathbb{Z}$.

\begin{proposition}\label{prop:tan}
Let $\mathcal{C}$ be a nonsymmetrically braided near-integral fusion category with $\mathrm{FPdim}(\mathcal{C})\in\mathbb{Z}$.  If $C_\mathcal{D}(\mathcal{D})$ is Tannakian, then $C_\mathcal{C}(\mathcal{C})=\mathcal{D}$, and either
\begin{enumerate}
\item $\theta_\rho=\pm\zeta_4$, $\kappa=0$, $\dim(\mathcal{C})=2N$,
\item $\theta_\rho=\zeta_3^{\pm1}$, $N=2\kappa^2$, and $\dim(\mathcal{C})=3N=6\kappa^2$, or
\item $\theta_\rho=-1$, $N=(3/4)\kappa^2$, and $\dim(\mathcal{C})=4N=3\kappa^2$.
\end{enumerate}
In all cases, $d_+\in\mathbb{Z}$.
\end{proposition}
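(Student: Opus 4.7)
The plan is to first promote the Tannakian-ness of $C_\mathcal{D}(\mathcal{D})$ to Tannakian-ness of the whole subcategory $\mathcal{D}$, thereby identifying $C_\mathcal{C}(\mathcal{C})$ with $\mathcal{D}$, and then to classify the resulting modular de-equivariantization $\mathcal{C}_A$ to obtain the three listed cases.

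Since $\mathcal{C}$ is nonsymmetric and every subcategory containing $\rho$ also contains every simple of $\mathcal{D}$ via $\rho\otimes\rho$, we have $C_\mathcal{C}(\mathcal{C})\subseteq\mathcal{D}$. Applying Lemma \ref{lem:one} to the hypothesized Tannakian $\mathcal{E}:=C_\mathcal{D}(\mathcal{D})$ gives $\mathcal{E}\subseteq C_\mathcal{C}(\mathcal{C})$ and $\mathcal{E}\subseteq C_\mathcal{D}(\rho)$, and combining with the identity $C_\mathcal{C}(\mathcal{C})=C_\mathcal{D}(\mathcal{D})\cap C_\mathcal{D}(\rho)$ (since $\mathcal{D}$ and $\rho$ jointly $\otimes$-generate $\mathcal{C}$) yields $C_\mathcal{C}(\mathcal{C})=\mathcal{E}$. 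To upgrade $\mathcal{E}$ to $\mathcal{D}$, I would de-equivariantize $\mathcal{C}$ by the regular algebra $A$ of $\mathcal{E}$ of dimension $m:=\dim(\mathcal{E})$; both $\mathcal{C}_A=\mathcal{C}_A^0$ and $\mathcal{D}_A=\mathcal{D}_A^0$ are then braided (as $\mathcal{E}$ is central) and modular (as we have quotiented away their full symmetric center). M\"uger's factorization yields $\mathcal{C}_A\simeq\mathcal{D}_A\boxtimes\mathcal{C}_A'$ with $\mathcal{C}_A':=C_{\mathcal{C}_A}(\mathcal{D}_A)$ of FPdim $1+d_+^2/N$. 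Since $\rho\otimes A$ has forgetful image $m\rho$ in $\mathcal{C}$ by absorption, each simple summand of $\rho\otimes A$ in $\mathcal{C}_A$ has trivial $\mathcal{D}_A$-projection and therefore lies in $\mathcal{C}_A'$; counting $\rho$'s $E$-orbit (where $E:=\mathcal{E}^\vee$) contributes total squared dimension $d_+^2/m$, so $\dim(\mathcal{C}_A')=1+d_+^2/m$, and equating this with the factorization-derived value $1+d_+^2/N$ forces $m=N$.

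For the classification, let $G$ satisfy $\mathrm{Rep}(G)\simeq\mathcal{D}$ with $|G|=N$ and de-equivariantize by the regular algebra of $\mathcal{D}$ to obtain a modular $\mathcal{C}_A$ of FPdim $1+d_+^2/N$. Writing $\delta:=d_+-\kappa=N/d_+$, the identity $d_+^2=\kappa d_++N$ gives $\dim(\mathcal{C}_A)=2+\kappa/\delta$; the integrality assumption $\mathrm{FPdim}(\mathcal{C})\in\mathbb{Z}$ combined with $\dim(\mathcal{C}_A)$ being a rational algebraic integer implies $\delta\mid\kappa$, so writing $k:=\kappa/\delta\in\mathbb{Z}_{\geq0}$ gives $\dim(\mathcal{C}_A)=k+2$ and $d_+=(k+1)\delta\in\mathbb{Z}$. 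The nontrivial simples of $\mathcal{C}_A$ constitute a single $G$-orbit with common FPdim $d_Y$ and common twist $\theta_\rho$; pseudounitarity (Lemma \ref{cor:1}) and algebraic integrality force $d_Y=1$, so $\mathcal{C}_A$ is pointed of rank $k+2$, equal to $\mathcal{C}(G_0,q)$ for abelian $G_0$ with $|G_0|=k+2$ and nondegenerate $q$. The $G$-action being transitive on $G_0\setminus\{e\}$ forces $q$ constant on $G_0\setminus\{e\}$, and enumeration of nondegenerate quadratic forms with this property singles out $G_0\in\{C_2,C_3,C_2\times C_2\}$ (cases $k=0,1,2$ with the stated twists), while for $k\geq3$ the only candidates are elementary abelian $p$-groups of order $\geq5$ on which no nondegenerate constant quadratic form exists.

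The main technical difficulty is the orbit-containment step in the first part---ruling out the possibility that a simple summand of $\rho\otimes A$ in $\mathcal{C}_A$ has both tensor factors of $\mathcal{D}_A\boxtimes\mathcal{C}_A'$ nontrivial---which follows from the absorption relation $\rho\otimes X=\dim(X)\rho$ for $X\in\mathcal{D}$ and Frobenius reciprocity between the free-module and forgetful functors.
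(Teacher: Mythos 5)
Your second step---upgrading $\mathcal{E}:=C_\mathcal{D}(\mathcal{D})=C_\mathcal{C}(\mathcal{C})$ to all of $\mathcal{D}$---has a genuine gap, and it is exactly at the point you flag as the main technical difficulty. The claim that every simple summand of $\rho\otimes A$ lies in $\mathcal{C}_A'=C_{\mathcal{C}_A}(\mathcal{D}_A)$ is not delivered by absorption plus Frobenius reciprocity. Those facts only show that every simple $Y\in\mathcal{O}(\mathcal{C}_A)\setminus\mathcal{O}(\mathcal{D}_A)$ has forgetful image $n_Y\rho$, and hence that \emph{every} such $Y$ is a summand of $\rho\otimes A$ with multiplicity $n_Y$; they say nothing about where $Y$ sits in the M\"uger factorization $\mathcal{C}_A\simeq\mathcal{D}_A\boxtimes\mathcal{C}_A'$. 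Indeed, since $F(X)\otimes_A F(\rho)\cong\dim(X)F(\rho)$ for $X\in\mathcal{O}(\mathcal{D})$, the set of summands of $\rho\otimes A$ is closed under tensoring by $\mathcal{O}(\mathcal{D}_A)$, so it equals all of $\mathcal{O}(\mathcal{D}_A)\boxtimes(\mathcal{O}(\mathcal{C}_A')\setminus\{\mathbbm{1}\})$; your containment in $\mathbbm{1}\boxtimes\mathcal{C}_A'$ is therefore \emph{equivalent} to $\mathcal{D}_A$ being trivial, i.e.\ to the statement $m=N$ you are trying to prove. Your dimension count is likewise vacuous: the total squared dimension of $\mathcal{O}(\mathcal{C}_A)\setminus\mathcal{O}(\mathcal{D}_A)$ is automatically $(N+d_+^2)/m - N/m = d_+^2/m$, consistent with any $m$. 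The paper closes this step differently: it computes the Gauss sums $\tau_1^{\pm}(\mathcal{C}_A)=\tau_1^{\pm}(\mathcal{D}_A)+\theta_\rho^{\pm1}M$, deduces that $\theta_\rho\xi_1(\mathcal{D}_A)^{-1}$ is a primitive root of unity of order $2$, $3$, or $4$ and that $\dim(\mathcal{C})=N+\dim(A)+c\sqrt{N\dim(A)}$ with $c\in\{0,1,2\}$, and then forces $\dim(A)=N$ from the divisibility $N\mid\dim(\mathcal{C})$.

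Your classification step, granting $C_\mathcal{C}(\mathcal{C})=\mathcal{D}$, is a legitimately different route from the paper's (the paper never identifies $\mathcal{C}_A$ as pointed; it reads off $\theta_\rho$ and $\dim(\mathcal{C})$ directly from the Gauss-sum identity). It is essentially sound but needs two repairs: the assertion that the common dimension $d_Y$ of the nontrivial simples of $\mathcal{C}_A$ equals $1$ does not follow from ``pseudounitarity and algebraic integrality'' alone---you should invoke $\mathrm{FPdim}(Y)^2\mid\mathrm{FPdim}(\mathcal{C}_A)$ for simples of a modular category, which together with $t\,d_Y^2=k+1$ and $\dim(\mathcal{C}_A)=k+2$ gives $d_Y^2\mid 1$; and the enumeration of nondegenerate quadratic forms constant on $G_0\setminus\{e\}$ (forcing $G_0\in\{C_2,C_3,C_2^2\}$) should be written out rather than asserted. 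As it stands, however, the proposal does not constitute a proof because the identification $C_\mathcal{C}(\mathcal{C})=\mathcal{D}$, on which everything downstream depends, is not established.
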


\begin{proof}
Let $A\in C_\mathcal{C}(\mathcal{C})$ be the regular algebra.  The categories $\mathcal{C}^0_A=\mathcal{C}_A$ and $\mathcal{D}^0_A=\mathcal{D}_A$ are both nondegenerately braided.  The simple summands of free $A$-modules on $X\in\mathcal{O}(\mathcal{D})$ and the free $A$-module $\rho\otimes A$ are disjoint. So with
\begin{equation}
M:=\sum_{\substack{\text{simple }Q\\Q\subset\rho\otimes A}}\dim(Q)^2,
\end{equation}
we compute by \cite[Proposition 8.15.4]{tcat}
\begin{align}
&&\dfrac{N}{\dim(A)}+M&=\dim(\mathcal{C}_A) \\
&&&=\tau^+_1(\mathcal{C}_A)\tau^-_1(\mathcal{C}_A) \\
&&&=\left(\tau_1^+(\mathcal{D}_A)+\theta_\rho M\right)\left(\tau_1^-(\mathcal{D}_A)+\theta^{-1}_\rho M\right) \\
&&&=\dfrac{N}{\dim(A)}+(\theta_\rho\tau_1^-(\mathcal{D}_A)+\theta^{-1}_\rho\tau_1^+(\mathcal{D}_A))M+M^2 \\
\Rightarrow&&0&=((\theta_\rho\tau_1^-(\mathcal{D}_A)+\theta^{-1}_\rho\tau_1^+(\mathcal{D}_A))-1+M)M.
\end{align}
We know $M\neq0$ since $\rho\otimes A$ has at least one simple summand.  Set $\xi:=\xi_1(\mathcal{D}_A)$ so that 
\begin{align}
M&=1-(\theta_\rho\tau_1^-(\mathcal{D}_A)+\theta^{-1}_\rho\tau_1^+(\mathcal{D}_A)) \\
&=1-((\theta^{-1}_\rho\xi)^{-1}+\theta^{-1}_\rho\xi)\sqrt{\dim(\mathcal{D}_A)}.
\end{align}
By assumption $M\in\mathbb{Z}$.   If $(\theta^{-1}_\rho\xi)^{-1}+\theta^{-1}_\rho\xi>0$, $M<1$ which cannot occur since $\rho\otimes A$ has at least one summand.  Moreover $\theta_\rho\xi^{-1}$ is a primitive root of unity of order $2$, $3$, or $4$.
\par If $\theta_\rho\xi^{-1}$ is a primitive $4$th root of unity, then $\theta_\rho\xi^{-1}+\theta^{-1}_\rho\xi=0$ and moreover $M=1$.  We compute in this case
\begin{align}
&&\dfrac{\dim(\mathcal{C})}{\dim(A)}&=\dfrac{N}{\dim(A)}+1 \\
\Rightarrow&&\dim(\mathcal{C})&=N+\dim(A)\leq2N
\end{align}
with equality if and only if $\kappa=0$.  Since $\dim(A)>0$ and $N$ divides $\dim(\mathcal{C})\in\mathbb{Z}$, then $\dim(A)=N$.  Therefore $\mathcal{C}$ is an equivariantization of an integral fusion category of rank $2$, so $\mathcal{C}$ is integral.

\par If $\theta_\rho\xi^{-1}$ is a primitive $3$rd root of unity, then $\theta_\rho\xi^{-1}+\theta^{-1}_\rho\xi=-1$ and therefore $M=1+\sqrt{\dim(\mathcal{D}_A)}$.  We compute in this case
\begin{align}
&&\dfrac{\dim(\mathcal{C})}{\dim(A)}&=\dfrac{N}{\dim(A)}+1+\sqrt{\dim(\mathcal{D}_A)} \\
\Rightarrow&&\dim(\mathcal{C})&=N+\dim(A)+\sqrt{N\dim(A)}\leq3N.
\end{align}
Since $N$ divides $\dim(\mathcal{C})\in\mathbb{Z}$ and $\dim(A)>0$, we must have $\dim(\mathcal{C})=3N$ or $\dim(\mathcal{C})=2N$.  In the latter case $\dim(A)+\sqrt{N\dim(A)}=N$ which implies $\dim(A)=N(1/2)(3\pm\sqrt{5})$ which is not an integer.  We may then conclude $\dim(\mathcal{C})=3N$ and $\dim(A)=N$.

\par Lastly if $\theta_\rho\xi^{-1}=-1$, then $M=1+2\sqrt{\dim(\mathcal{D}_A)}$.   We compute in this case
\begin{align}
&&\dfrac{\dim(\mathcal{C})}{\dim(A)}&=\dfrac{N}{\dim(A)}+1+2\sqrt{\dim(\mathcal{D}_A)} \\
\Rightarrow&&\dim(\mathcal{C})&=N+\dim(A)+2\sqrt{N\dim(A)}\leq4N.
\end{align}
Since $N$ divides $\dim(\mathcal{C})\in\mathbb{Z}$ and $\dim(A)>0$, we must have $\dim(\mathcal{C})=4N$, $\dim(\mathcal{C})=3N$ or $\dim(\mathcal{C})=2N$.  In the second case, $\dim(A)+2\sqrt{N\dim(A)}=2N$ which implies $\dim(A)=2N(2\pm\sqrt{3})$ which is not an integer.  In the third case, $\dim(A)+2\sqrt{N\dim(A)}=N$ which implies $\dim(A)=N(3\pm2\sqrt{2})$ which is not an integer.  We may then conclude $\dim(\mathcal{C})=4N$ and $\dim(A)=N$.
\end{proof}

\begin{proposition}\label{prop:stan}
Let $\mathcal{C}$ be a braided near-integral fusion category with $\mathrm{FPdim}(\mathcal{C})\in\mathbb{Z}$.  If $C_\mathcal{D}(\mathcal{D})$ is super-Tannakian, then $\mathcal{D}=C_\mathcal{D}(\mathcal{D})$, $\kappa=0$ and $\theta_\rho^{16}=1$.
\end{proposition}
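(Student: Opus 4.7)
The plan is to parallel the proof of Proposition \ref{prop:tan}, with supermodularity of $\mathcal{D}_A$ playing the role of modularity. Since $C_\mathcal{D}(\mathcal{D})$ is super-Tannakian, it contains a maximal Tannakian subcategory $\mathcal{E}$ of index $2$, and by Lemma \ref{lem:one}, $\mathcal{E}\subset C_\mathcal{C}(\mathcal{C})$. Let $A$ be the regular algebra of $\mathcal{E}$ and form the de-equivariantizations $\mathcal{D}_A\subset\mathcal{C}_A$, which are fully braided since $\mathcal{E}$ lies in the symmetric center of $\mathcal{C}$. The symmetric center of $\mathcal{D}_A$ is the image of $C_\mathcal{D}(\mathcal{D})$ under de-equivariantization by its maximal Tannakian subcategory, namely $\mathrm{sVec}$, so $\mathcal{D}_A$ is supermodular with vanishing Gauss sums $\tau_1^\pm(\mathcal{D}_A)=0$ (simple objects pair as $X$ and $X\otimes f$ with identical dimensions and opposite twists). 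Setting $M:=\sum_{Q\subset\rho\otimes A}\dim(Q)^2$ over simple summands, all of which have twist $\theta_\rho$ since $\rho$ centralizes $\mathcal{E}$, gives $\tau_1^\pm(\mathcal{C}_A)=\theta_\rho^{\pm1}M$.

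First I would rule out the case where $\mathcal{C}$ is itself symmetrically braided. In that event $\mathcal{C}_A$ inherits a symmetric braiding, so $\tau_1^+(\mathcal{C}_A)$ equals the non-negative real quantity $|G'|\delta_{\nu',e}$ where $\mathcal{C}_A\simeq\mathrm{Rep}(G',\nu')$ (by column orthogonality). Equating with $\theta_\rho M$ forces $\theta_\rho=1$ and $M=|G'|=\dim(\mathcal{C}_A)=N/\dim(A)+M$, yielding $N=0$, a contradiction. Hence $\mathcal{C}$ is not symmetric, so by Proposition \ref{prop:two}, $C_\mathcal{C}(\mathcal{C})=\mathcal{E}$ and $\mathcal{C}_A$ is modular.

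Modularity gives $\tau_1^+(\mathcal{C}_A)\tau_1^-(\mathcal{C}_A)=\dim(\mathcal{C}_A)$, so $M^2=N/\dim(A)+M$. The Frobenius-Perron identity $\dim(\mathcal{C})=\dim(\mathcal{C}_A)\dim(A)=N+d_+^2$ forces $M\dim(A)=d_+^2$, and combining these yields $M=1+N/d_+^2=2-\kappa/d_+$, using $d_+^2=N+\kappa d_+$. The main obstacle is to see $M$ is an integer: if $\kappa\geq 1$, the hypothesis $\mathrm{FPdim}(\mathcal{C})\in\mathbb{Z}$ gives $d_+^2\in\mathbb{Z}$ and hence $\kappa d_+\in\mathbb{Z}$, which makes $d_+$ a rational algebraic integer, so $d_+\in\mathbb{Z}$ and $M$ is rational; being an algebraic integer, $M\in\mathbb{Z}$. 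Since $\kappa<d_+$ by Lemma \ref{bund} whenever $d_+\in\mathbb{Z}$, $M\in(1,2]$, forcing $M=2$ and $\kappa=0$. Then $\dim(A)=d_+^2/M=N/2$, and as $\mathcal{E}$ is index $2$ in $C_\mathcal{D}(\mathcal{D})$, this gives $\dim(C_\mathcal{D}(\mathcal{D}))=N=\dim(\mathcal{D})$, so $C_\mathcal{D}(\mathcal{D})=\mathcal{D}$.

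With $M=2$, $\mathcal{D}_A\simeq\mathrm{sVec}$ and $\mathcal{C}_A$ is a modular extension of dimension $4$. A brief fusion-rule analysis rules out a pointed $\mathcal{C}_A$ (since $\mathrm{sVec}$ cannot embed in a pointed modular category as its symmetric center would then be non-trivial), so $\rho\otimes A$ must be a single simple of dimension $\sqrt{2}$ and $\mathcal{C}_A$ has Ising fusion rules. The classification of Ising-type modular categories then forces the non-pointed simple of $\mathcal{C}_A$ to have twist a primitive $16$th root of unity, and since this twist agrees with $\theta_\rho$ (preserved under de-equivariantization for summands of $\rho\otimes A$), $\theta_\rho^{16}=1$.
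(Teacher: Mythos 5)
Your derivation of $\kappa=0$ and $\mathcal{D}=C_{\mathcal{D}}(\mathcal{D})$ is sound and runs parallel to the paper's: both arguments reduce to the Gauss-sum identity $M^2=\dim(\mathcal{D}_A)+M$ for the modular category $\mathcal{C}_A$; you then pin down $M=2-\kappa/d_+$ and use integrality, while the paper bounds $\dim(\mathcal{C}_A)\leq 2\dim(\mathcal{D}_A)$ against $\dim(\mathcal{C})/\dim(\mathcal{D})\in\mathbb{Z}_{\geq 2}$. Either route works, and your explicit handling of the symmetric case is a reasonable extra precaution.

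The final step, however, has a genuine gap. You claim a pointed $\mathcal{C}_A$ is impossible ``since $\mathrm{sVec}$ cannot embed in a pointed modular category as its symmetric center would then be non-trivial.'' That reasoning confuses containing a fusion subcategory braided equivalent to $\mathrm{sVec}$ with that subcategory lying in the symmetric center: $\mathcal{C}(C_4,q)$ with $q$ of order $8$ is pointed, modular, and contains $\mathrm{sVec}$ on the order-two element. Worse, the pointed case genuinely occurs under the hypotheses of the proposition: for $\mathcal{C}=(\mathcal{I}_{q_1}\boxtimes\mathcal{I}_{q_2})_{\mathbb{Q}}$ one has $\mathcal{D}=C_{\mathcal{D}}(\mathcal{D})\simeq\mathrm{Rep}(C_2^2,\nu)$ super-Tannakian, $\mathcal{E}=\langle\epsilon_1\epsilon_2\rangle$, and $F_A(\rho)$ splits into two invertibles, so $\mathcal{C}_A\simeq\mathcal{C}(C_4,q)$ is pointed and $\theta_\rho=q_1q_2$ is a primitive $8$th root of unity. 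The conclusion $\theta_\rho^{16}=1$ survives, but your argument does not establish it; you would need to add the pointed case (where one checks directly that the two non-$\mathrm{sVec}$ invertibles share a twist of order dividing $8$). The paper sidesteps this case analysis entirely: from $\tau_1^+(\mathcal{C}_A)=\theta_\rho M$ with $M=\sqrt{\dim(\mathcal{C}_A)}$ it reads off $\theta_\rho=\xi_1(\mathcal{C}_A)$, the multiplicative central charge of the weakly integral modular category $\mathcal{C}_A$, which is a $16$th root of unity by the cited result. I recommend adopting that identification rather than classifying $\mathcal{C}_A$.
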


\begin{proof}
Let $A\in\mathcal{D}$ the regular algebra of $C_\mathcal{C}(\mathcal{C})$.  Then $\mathcal{C}^0_A=\mathcal{C}_A$ is modular and $\mathcal{D}^0_A=\mathcal{D}_A$ is supermodular, thus $\tau^+_1(\mathcal{D}_A)=0$ and $\tau^+_1(\mathcal{C}_A)=\xi_1(\mathcal{C}_A)\sqrt{\dim(\mathcal{C}_A)}$ (by definition).  The simple summands of the free modules on $X\in\mathcal{O}(\mathcal{D})$ and the simple summands of $\rho\otimes A$ are disjoint, so we compute
\begin{align}
\xi_1(\mathcal{C}_A)\sqrt{\dim(\mathcal{C}_A)}=\tau^+_1(\mathcal{C}_A)&=\tau^+_1(\mathcal{D}_A)+\theta_\rho \sum_{\substack{\text{simple }M\\M\subset\rho\otimes A}}\dim(M)^2 \\
&=\theta_\rho\sum_{\substack{\text{simple }M\\M\subset\rho\otimes A}}\dim(M)^2,
\end{align}
therefore $\theta_\rho=\xi_1(\mathcal{C}_A)$ must be a $16$th root of unity by \cite[Proposition 2.6]{2019arXiv191212260G} since $\mathcal{C}$ is weakly integral.   Now we have also that
\begin{align}
\dim(\mathcal{C}_A)&=\left(\sum_{\substack{\text{simple }M\\M\subset\rho\otimes A}}\dim(M)^2\right)^2=\left(\dim(\mathcal{C}_A)-\dim(\mathcal{D}_A)\right)^2.\label{foorty}
\end{align}
Hence by the quadratic formula,
\begin{align}
\dim(\mathcal{C}_A)&=\dim(\mathcal{D}_A)+\dfrac{1}{2}\left(1+\sqrt{1+4\dim(\mathcal{D}_A)}\right) \\
&\leq2\dim(\mathcal{D}_A)
\end{align}
since $\dim(\mathcal{D}_A)$ is an integer greater than or equal to $2$ as $\mathcal{D}_A$ is supermodular.  Moreover $\dim(\mathcal{C})=2\dim(\mathcal{D})$ because $\dim(\mathcal{C})/\dim(\mathcal{D})\in\mathbb{Z}_{\geq2}$.  So we have $\kappa=0$.  Equation (\ref{foorty}) then implies
\begin{align}
\dfrac{2\dim(\mathcal{D})}{\dim(A)}=\left(\dfrac{2\dim(\mathcal{D})}{\dim(A)}-\dfrac{\dim(\mathcal{D})}{\dim(A)}\right)^2=\dfrac{\dim(\mathcal{D})^2}{\dim(A)^2},
\end{align}
and moreover $\dim(\mathcal{D})=2\dim(A)$, i.e.\ $\mathcal{D}=C_\mathcal{D}(\mathcal{D})$ is symmetrically braided.
\end{proof}

\begin{proposition}\label{prop:rep}
Let $\mathcal{C}$ be a braided near-integral fusion category.  If $\dim(\mathcal{C})=3N$ for some $N\in\mathbb{Z}_{\geq1}$, then $\mathcal{C}$ is equivalent as a fusion category to $\mathrm{Rep}(G)$ for a finite group $G$.
\end{proposition}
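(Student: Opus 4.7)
The plan is to use the structure theorems of this section to reduce to the nonsymmetric Tannakian situation, de-equivariantize to obtain a modular category of dimension $3$, and reconstruct $\mathcal{C}$ as the representation category of a finite extension of the Tannakian group $G_0$ by $C_3$.

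First I would dispose of the easy reductions. The hypothesis $\dim(\mathcal{C})=3N\in\mathbb{Z}$ together with Proposition \ref{prop:fib} forces $\mathrm{FPdim}(\mathcal{C})\in\mathbb{Z}$, since the exceptional category $\mathcal{C}(A_1,5,q)_{\mathrm{ad}}$ has irrational Frobenius-Perron dimension. If $\mathcal{C}$ happens to be symmetrically braided then $\mathcal{C}\simeq\mathrm{Rep}(G,\nu)$ by \cite[Theorem 9.9.26]{tcat}, whose underlying fusion category is $\mathrm{Rep}(G)$, and we are done. Otherwise, since Proposition \ref{prop:stan} yields $\dim(\mathcal{C})=2N$ whenever $C_{\mathcal{D}}(\mathcal{D})$ is super-Tannakian, the assumption $\dim(\mathcal{C})=3N$ forces $C_{\mathcal{D}}(\mathcal{D})$ to be Tannakian, placing us inside Proposition \ref{prop:tan}. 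The dimension constraint then singles out case (2), giving $C_{\mathcal{C}}(\mathcal{C})=\mathcal{D}\simeq\mathrm{Rep}(G_0)$ for a finite group $G_0$ with $|G_0|=N$, together with $\dim(A)=N$ for the regular algebra $A$ of $\mathcal{D}$.

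Next I would de-equivariantize by $A$. Since $\mathcal{D}=C_{\mathcal{C}}(\mathcal{C})$, the category $\mathcal{C}_A$ is modular with $\dim(\mathcal{C}_A)=\dim(\mathcal{C})/\dim(A)=3$. A dimension-$3$ fusion category is necessarily pointed with Grothendieck ring $\mathbb{Z}C_3$ (simple dimension-counting), and by the discussion in Example \ref{ex:braidgroup} the existence of a braiding pins down its associator as trivial; hence $\mathcal{C}_A\simeq\mathcal{C}(C_3,q)$ for a nondegenerate $q$, and in particular $\mathcal{C}_A\simeq\mathrm{Vec}_{C_3}\simeq\mathrm{Rep}(C_3)$ as fusion categories. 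Applying the equivalence $(\mathcal{C}_A)^{G_0}\simeq\mathcal{C}$ from Section \ref{sec:deeq}, we have $\mathcal{C}\simeq(\mathrm{Rep}(C_3))^{G_0}$ as fusion categories. The $G_0$-action on $\mathrm{Rep}(C_3)$ by tensor autoequivalences is classified up to equivalence by a homomorphism $\phi:G_0\to\mathrm{Aut}(C_3)$ together with a cohomology class $\eta\in H^2(G_0,C_3)$ (with $G_0$ acting on $C_3$ through $\phi$), and such data define a group extension $1\to C_3\to G\to G_0\to 1$ for which $(\mathrm{Rep}(C_3))^{G_0}\simeq\mathrm{Rep}(G)$ via the classical restriction/induction equivalence (see, e.g., \cite[Section 4.2]{DGNO}).

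The hard part will be this last identification: checking that the $G_0$-action inherited on $\mathcal{C}_A$ from the Tannakian embedding $\mathcal{D}\hookrightarrow\mathcal{C}$, transported through the fusion equivalence $\mathcal{C}_A\simeq\mathrm{Rep}(C_3)$, is genuinely classified by a pair $(\phi,\eta)$ as above, and that the resulting equivariantization agrees on the nose with $\mathrm{Rep}(G)$ for the corresponding extension $G$. An alternative route would be to construct a fiber functor on $\mathcal{C}$ by composing the forgetful $(\mathcal{C}_A)^{G_0}\to\mathcal{C}_A$ with a fiber functor $\mathrm{Rep}(C_3)\to\mathrm{Vec}$ and invoke Tannakian reconstruction, but producing a symmetric structure on this fiber functor is itself nontrivial in a nonsymmetric braided setting, so the direct identification via group extensions appears preferable.
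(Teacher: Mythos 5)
Your reduction to case (2) of Proposition \ref{prop:tan} and the presentation of $\mathcal{C}$ as an equivariantization $\mathcal{C}(C_3,q)^{G_0}$ with $\mathcal{D}=C_\mathcal{C}(\mathcal{C})\simeq\mathrm{Rep}(G_0)$ matches the first half of the paper's proof exactly. The gap is in the step you yourself flag as ``the hard part,'' and it is not merely hard: the principle you invoke is false. An action of $G_0$ on $\mathrm{Vec}_{C_3}$ by tensor autoequivalences is indeed classified by a homomorphism $\phi:G_0\to\mathrm{Aut}(C_3)$ together with a class in $H^2(G_0,\widehat{C_3})$, but the resulting equivariantization is the representation category of an abelian (bicrossed product) extension of Hopf algebras of the form $\mathbb{C}^{C_3}\#\,\mathbb{C}G_0$, and such Hopf algebras need not be cocommutative, i.e.\ need not be group algebras. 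The standard counterexample to your exact pattern of reasoning is the Kac--Paljutkin algebra $H_8$: one has $\mathrm{Rep}(H_8)\simeq(\mathrm{Vec}_{C_2})^{C_2\times C_2}$ for an action classified by precisely such a pair $(\phi,\eta)$, yet $\mathrm{Rep}(H_8)$ is not tensor equivalent to $\mathrm{Rep}(G)$ for any finite group $G$ (it is the third Tambara--Yamagami categorification over $C_2^2$ admitting a fiber functor, alongside $\mathrm{Rep}(D_4)$ and $\mathrm{Rep}(Q_8)$). So the dictionary ``action data $\leftrightarrow$ group extension, equivariantization $=\mathrm{Rep}$ of that extension'' cannot be correct as stated, and nothing in your argument uses the braiding beyond setting up the equivariantization, which is exactly the information needed to rule out the Kac--Paljutkin phenomenon.

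The paper closes this gap differently. From the exact sequence of fusion categories $\mathrm{Rep}(G_0)\to\mathcal{C}\to\mathcal{C}(C_3,q)$ it deduces via \cite[Lemma 7.4]{MR2863377} that $\mathcal{C}(C_3,q)$, and hence $\mathcal{C}$, admits a fiber functor, so $\mathcal{C}\simeq\mathrm{Rep}(H)$ for a semisimple \emph{quasitriangular} Hopf algebra $H$; quasitriangularity is then fed into \cite[Proposition 4.2]{MR3012244} to show $H$ is a twist of a Hopf algebra $H'$ sitting in a \emph{cocentral} exact sequence with kernel the group algebra $\mathbb{C}C_3$ (not the function algebra $\mathbb{C}^{C_3}$), which forces $\mathrm{coRep}(H')$ to be $G_0$-graded with pointed trivial component of Frobenius--Perron dimension $3$, hence pointed, hence $H'$ cocommutative and a group algebra. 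Your ``alternative route'' via a fiber functor is closer in spirit to this, but your worry about symmetric structures is a red herring (an ordinary tensor fiber functor suffices to produce $H$), and even with $\mathcal{C}\simeq\mathrm{Rep}(H)$ in hand one still needs the quasitriangular structure theory of \cite{MR3012244} to conclude that $H$ is a twist of a group algebra; without it you are stuck at exactly the same point.
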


\begin{proof}
If $\mathcal{C}$ is symmetrically braided, we are done, otherwise Proposition \ref{prop:tan} implies $\mathcal{C}$ is an equivariantization $\mathcal{C}(C_3,q)^A$ for a nondegenerate quadratic form $q:C_3\to\mathbb{C}^\times$ and finite group $A$.  Therefore we have an exact sequence of fusion categories, in the sense of \cite{MR2863377},
\begin{equation}
\mathrm{Rep}(A)\to\mathcal{C}\stackrel{F}{\to}\mathcal{C}(C_3,q)
\end{equation}
where $F$ is the modularization functor.  Lemma 7.4 of \cite{MR2863377} then implies $\mathcal{C}(C_3,q)$ admits a fiber functor, and moreover so does $\mathcal{C}$.  Therefore $\mathcal{C}$ is equivalent as a fusion category to the category of representations of a semisimple quasitriangular Hopf algebra $H$.  By \cite[Proposition 4.2]{MR3012244}, $H$ is twist equivalent to a Hopf algebra $H'$ that fits into a cocentral exact sequence
\begin{equation}
\mathbb{C}\to\mathbb{C}C_3\to H'\to\mathbb{C}A\to\mathbb{C}.
\end{equation}
We then have the category $\mathrm{coRep}(H')$ of corepresentations of $H'$ has an $A$-grading such that the trivial component is the category $\mathrm{coRep}(C_3)$ of corepresentations of $C_3$ \cite[Proposition 4.1]{MR3012244}.  Therefore $\mathrm{coRep}(H')$ is pointed and moreover $H'$ is a group algebra.
\end{proof}

\begin{example}  The non-symmetrically braided fusion categories with the fusion rules of $\mathrm{Rep}(S_3)$ can be found in the double $\mathcal{Z}(\mathrm{Rep}(S_3))$ by generating a subcategory with any simple object of dimension $2$ and twist which is a primitive third root of unity (see Example \ref{braidfin}).  This example can viewed this as the $n=0$ element in the infinite series $3_-^{1+2n}\rtimes C_2$, though by definition $C_3$ is not considered extra-special.

\par It is elementary to verify that in the untwisted double $\mathcal{Z}(\mathrm{Vec}_G)$ for $G\cong 3_-^{1+2n}\rtimes C_2$ any simple object of dimension $2\cdot3^n$ whose twist is a primitive third root of unity generates a braided near-integral subcategory with Tannakian $\mathcal{D}=C_\mathcal{D}(\mathcal{D})$.  This subcategory has the same fusion rules as $\mathrm{Rep}(3_-^{1+2n}\rtimes C_2)$ but is not symmetrically braided.  The same construction does not generalize to extra-special $p$-groups of exponent $p$.
\end{example}


\section{Classification of premodular fusion categories}\label{sec:class}

\par The clear method for classifying premodular fusion categories (see Section \ref{subsec:braid}) of a fixed rank is to argue based on how symmetric the braiding is.  Many tools exist at the extreme ends of this spectrum.  For example, if the braiding is nondegenerate, then such categories have associated modular data \cite[Section 8.17]{tcat} which makes classification up to Grothendieck equivalence a mostly computational task.  Finite lists of possible modular data for modular fusion categories up to rank 11 already appears in the literature \cite{ng2023classification} with higher ranks surely to follow using identical techniques.  On the opposite end of the spectrum, the classification of symmetrically braided fusion categories lies in the realm of the representation theory of finite groups.  Each symmetrically braided fusion category is braided equivalent to one of the form $\mathrm{Rep}(G,\nu)$ where $G$ is a finite group and $\nu\in G$ is a central element of order at most $2$ describing a potential twisting of the braiding from the standard one \cite[Corollary 9.9.25]{tcat}; if $\nu=e$ is trivial, then we will often abbreviate $\mathrm{Rep}(G):=\mathrm{Rep}(G,e)$.  For other one-off notation, we denote the rank $1$ symmetrically braided fusion category $\mathrm{Vec}=\mathrm{Rep}(C_1,e)$ and the non-Tannakian rank $2$ symmetrically braided fusion category as $\mathrm{sVec}:=\mathrm{Rep}(C_2,\nu)$ for $\nu\neq e$.   Finite groups with less than or equal to $14$ isomorphism classes of irreducible representations have been explicitly described and conveniently cataloged in the literature (see \cite{MR2308490} and references within).  In Figure \ref{fig:symm} we collect the finite groups with less than or equal to $6$ conjugacy classes as they are the ones relevant to our current work, and count the number of central elements $\nu$ with $\nu^2=e$.  We use standard notation for finite groups including cyclic groups of order $n$ ($C_n$), dihedral groups of order $2n$ ($D_n$), symmetric groups on $n$ elements ($S_n$), alternating groups on $n$ elements ($A_n$), the quaternion group of order $8$ ($Q_8$), the group of affine linear transformations of the finite field of order $n$ ($F_n$), the dicyclic groups of order $4n$ ($\mathrm{Dic}_n$), and the projective special unitary groups ($\mathrm{PSU}(n,m)$).  The nontrivial semidirect products $\rtimes$ are unambiguous.

\begin{figure}[H]
\centering
\begin{equation*}
\begin{array}{|c|c|c|}
\hline G & |G| & \#\nu \\\hline\hline
C_1 & 1 & 1 \\\hline
C_2 & 2 & 2 \\\hline
C_3 & 3 & 1 \\
S_3 & 6 & 1  \\\hline
C_4 & 4 & 2 \\
C_2^2 & 4 & 4 \\
D_5 & 10 & 1 \\
A_4 & 12 & 1 \\\hline
\end{array}\,\,
\begin{array}{|c|c|c|}
\hline G & |G| & \#\nu \\\hline\hline
C_5 & 5 & 1 \\
D_4 & 8 & 2 \\
Q_8 & 8 & 2 \\
D_7 & 14 & 1 \\
F_5 & 20 & 1 \\
C_7\rtimes C_3 & 21 & 1 \\
S_4 & 24 & 1 \\
A_5 & 60 & 1 \\\hline
\end{array}\,\,
\begin{array}{|c|c|c|}
\hline G & |G| & \#\nu \\\hline\hline
C_6 & 6 & 2 \\
D_6 & 12 & 2 \\
\mathrm{Dic}_3 & 12 & 2 \\
D_9 & 18 & 1 \\
C_3\rtimes S_3 & 18 & 1 \\
C_3^2\rtimes C_4 & 36 & 1 \\
\mathrm{PSU}(3,2) & 72 & 1 \\
\mathrm{PSU}(2,7) & 168 & 1 \\\hline
\end{array}
\end{equation*}
    \caption{Finite groups with $6$ or less conjugacy classes}%
    \label{fig:symm}%
\end{figure}

\par The braided fusion categories inbetween nondegenerately braided and symmetrically braided have fewer tools available.  Some progress has been made in one case near to nondegenerately braided: when the symmetric center is braided equivalent to $\mathrm{sVec}$.  A complete classification of the possible premodular data for these so-called \emph{supermodular} fusion categories up to rank 6 can be found in \cite{MR4109138}, extrapolating tools used in the nondegenerate case.  This still leaves the case when the symmetric center is $\mathrm{Rep}(C_2)$, which one could argue is equally symmetric.  The results of Section \ref{seccat} on near-integral fusion can be seen as probing the nonsymmetric-but-degenerately braided world from the opposite side by considering those premodular fusion categories which are almost symmetric, but not quite.  We have shown that a braided fusion category $\mathcal{C}$ has a proper fusion subcategory $\mathcal{D}\subset\mathcal{C}$ of maximal rank if and only if $\mathcal{D}$ is symmetrically braided, and we have provided a highly-restrictive classification of such categories in Propositions \ref{prop:tan} and \ref{prop:stan}.

\par We note that our classification of premodular fusion categories of small rank below is up to braided equivalence; we make no effort to characterize the possible pivotal/spherical structures on these braided categories though all but one family \cite{MR4327964} have a canonical pivotal structure such that the categorical and Frobenius-Perron dimensions of simple objects coincide.


\subsection{Premodular categories of rank $\leq4$}\label{subsec:4}

\par Here we reiterate the known classification of premodular fusion categories of rank $3$ or less, and improve the known results on premodular fusion categories of rank $4$ which were previously characterized only by their fusion rules in \cite[Theorem 4.11]{MR3548123}.  The braided equivalence classes of premodular fusion categories of rank $3$ or less are listed in Figure \ref{fig:catranklessthan4} and those of rank $4$ are listed in Figure \ref{fig:catrank4}.  In these tables $q$ is such that $q^2$ is a primitive root of unity of order $m\in\mathbb{Z}_{\geq2}$ and $[n]_m:=(q^n-q^{-n})/(q-q^{-1})$ for $n\in\mathbb{Z}$.

\begin{figure}[H]
\centering
\begin{align*}
\begin{array}{|c|c|c|c|c|}
\hline \mathcal{C} & \mathrm{FPdim}(\mathcal{C}) & \mathrm{FPdims} & C_\mathcal{C}(\mathcal{C}) & \# \\\hline\hline
\mathrm{Vec} & 1 & 1 & \mathrm{Vec} & 1 \\\hline\hline
\mathrm{Rep}(C_2,\nu) & 2 & 1,1 & \mathrm{Rep}(C_2,\nu) & 2 \\
\mathcal{C}(C_2,q) & 2 & 1,1 & \mathrm{Vec} & 2 \\
\mathcal{C}(A_1,5,q)_\mathrm{ad} & \frac{5}{4}\csc^2(\frac{\pi}{5}) & 1,[3]_5 & \mathrm{Vec} & 4 \\\hline\hline
\mathrm{Rep}(C_3) & 3 & 1,1,1 & \mathrm{Rep}(C_3) & 1 \\
\mathcal{C}(C_3,q) & 3 & 1,1,1 & \mathrm{Vec} & 2\\
\mathcal{I}_q & 4 & 1,1,\sqrt{2} & \mathrm{Vec} & 8\\
\mathrm{Rep}(S_3) & 6 & 1,1,2 & \mathrm{Rep}(S_3) & 1 \\
\mathrm{Rep}(S_3)^\omega & 6 & 1,1,2 & \mathrm{Rep}(C_2) & 2\\
\mathcal{C}(A_1,7,q)_\mathrm{ad} & \frac{7}{4}\csc^2(\frac{\pi}{7}) &1,[3]_7,[5]_7 & \mathrm{Vec} & 6 \\
\hline
\end{array}
\end{align*}
    \caption{The $29$ braided equivalence classes of premodular fusion categories of rank $\leq3$, separated by rank from least (top) to greatest (bottom)}%
    \label{fig:catranklessthan4}%
\end{figure}

\par Premodular fusion categories of rank $1$ must be symmetrically braided so there is a unique class up to braided equivalence by Figure \ref{fig:symm}.  Premodular fusion categories of rank $2$, if not symmetrically braided, are modular.  A modular fusion category of rank $2$ has either a transitive Galois action \cite{MR4389082} or the Galois action on simple objects is trivial, hence $\mathcal{C}$ is integral and moreover pointed (see Example \ref{ex:braidgroup}).  This produces eight premodular fusion categories of rank $2$ up to braided equivalence: four are pointed, with two symmetric and two modular, and four are the Galois conjugate transitive modular fusion categories $\mathcal{C}(A_1,5,q)_\mathrm{ad}$ where $q^2$ is a primitive fifth root of unity \cite[Theorem I]{MR4389082}.  As a result, all are Galois conjugate to a pseudounitary premodular fusion category so there exists a canonical spherical structure via \cite[Section 9.5]{tcat}.

\par If a premodular category of rank $3$ is not symmetric or modular, then its symmetric center is a maximal fusion subcategory of rank/dimension $2$ and the results of Section \ref{sec:brad} apply.  If $\kappa=0$ then $\mathcal{C}$ would be an Ising braided fusion category, hence it is modular, which violates having a nontrivial symmetric center.  The only other possibility is that $\kappa=1$ by Propositions \ref{prop:tan} and \ref{prop:stan}, and moreover by Proposition \ref{prop:rep}, $\mathcal{C}$ is equivalent as a fusion category to $\mathrm{Rep}(S_3)$.  Braidings on such a near-group fusion category are given in Example \ref{braidfin}; there are two nonsymmetrically braided equivalence classes of such categories which we label $\mathrm{Rep}(S_3)^\omega$ as they are $C_2$-equivariantizations (see Section \ref{sec:deeq}) of pointed premodular categories $\mathcal{C}(C_3,q)$ where $q:C_3\to\mathbb{C}^\times$ takes value $\omega$, a primitive third root of unity, on the nontrivial elements.  This leaves the symmetrically braided fusion categories of rank $3$, of which there are two braided equivalence classes as seen in Figure \ref{fig:symm}, and the modular fusion categories of rank $3$, of which there are sixteen braided equivalence classes.  By inspecting the 24 possible modular data of rank $3$ \cite[Appendix E.2]{ng2023classification}, six of these have global dimension which are cubic integers, hence are the Galois conjugate transitive modular fusion categories $\mathcal{C}(A_1,7,q)_\mathrm{ad}$ where $q^2$ is a primitive $7$th root of unity \cite[Theorem I]{MR4389082}.  If $\mathcal{C}$ is weakly integral, then it is either (1) pointed, giving the two braided equivalence classes of rank $3$ pointed modular fusion categories, (2) it is strictly weakly integral, and therefore has a maximal pointed fusion subcategory, which forces $\mathcal{C}$ to be an Ising modular fusion category, providing another $8$ braided equivalence classes \cite[Appendix B.3]{DGNO}, or (3) $\mathcal{C}$ is integral and not pointed; there are no categories of the latter type as they must have a maximal pointed fusion subcategory \cite[Lemma 4.2]{schopieray2023fixedpointfree} and therefore are not modular by our discussion above.  This accounts for all $24$ possible modular data since each of the $8$ braided equivalence classes of Ising braided fusion categories has exactly two inequivalent spherical structures \cite[Appendix B.5]{DGNO}.

\begin{note}
Observe that we have shown that all premodular fusion categories of rank $3$ or less are \emph{multiplicity-free}, i.e.\ none of the fusion coefficients exceed $1$.  As a result, all of the categories in Figure \ref{fig:catranklessthan4} are contained in Table 9.1 of \cite{vercleyen2024lowrank} which lists all multiplicity-free fusion categories of ranks $\leq7$ (see also \cite{MR4644689}).
\end{note}

\begin{figure}[H]
\centering
\begin{align*}
\begin{array}{|c|c|c|c|c|}
\hline \mathcal{C} & \mathrm{FPdim}(\mathcal{C}) & \mathrm{FPdims} & C_\mathcal{C}(\mathcal{C}) & \# \\\hline\hline
\mathrm{Rep}(C_4,\nu) & 4 & 1,1,1,1 & \mathrm{Rep}(C_4,\nu) & 2 \\
\mathrm{Rep}(C_2^2,\nu) & 4 & 1,1,1,1 & \mathrm{Rep}(C_2^2,\nu) & 2 \\
\mathrm{Rep}(D_5) & 10 & 1,1,2,2 & \mathrm{Rep}(D_5) & 1 \\
\mathrm{Rep}(A_4) & 12 & 1,1,1,3 & \mathrm{Rep}(A_4) & 1\\
\hline\hline
\mathrm{Rep}(A_4)^\epsilon & 12 & 1,1,1,3 & \mathrm{Rep}(C_3) & 1 \\
\hline\hline
\mathcal{C}(C_2,q)\boxtimes\mathrm{Rep}(C_2,\nu) & 4 & 1,1,1,1 & \mathrm{Rep}(C_2,\nu) & 3\\
\mathcal{C}(C_4,q_{\pm i}) & 4 & 1,1,1,1 & \mathrm{Rep}(C_2) & 2 \\ 
\mathrm{Rep}(D_5)^\mu & 10 & 1,1,2,2 & \mathrm{Rep}(C_2) & 2 \\
\mathcal{C}(A_1,5,q)_\mathrm{ad}\boxtimes\mathrm{Rep}(C_2,\nu) & \frac{5}{2}\csc^2(\frac{\pi}{5}) & 1,1,[3]_5,[3]_5 & \mathrm{Rep}(C_2,\nu) & 8 \\
\mathcal{C}(A_1,8,q)_\mathrm{ad} & 2\csc^2(\frac{\pi}{8}) & 1,1,[3]_8,[3]_8 & \mathrm{sVec} & 2 \\
\hline\hline
\mathcal{C}(C_4,q) & 4 & 1,1,1,1 & \mathrm{Vec} & 4 \\
\mathcal{C}(C_2^2,q) &4 & 1,1,1,1 & \mathrm{Vec} & 5 \\
\mathcal{C}(A_1,5,q_1)_\mathrm{ad}\boxtimes\mathcal{C}(C_2,q_2) & \frac{5}{2}\csc^2(\frac{\pi}{5}) & 1,1,[3]_5,[3]_5 & \mathrm{Vec} & 8  \\
\mathcal{C}(A_1,5,q_1)_\mathrm{ad}\boxtimes\mathcal{C}(A_1,5,q_2)_\mathrm{ad} & 5[3]_5^2 & 1,[3]_5,[3]_5,[3]_5^2 & \mathrm{Vec} & 10 \\
\mathcal{C}(A_1,9,q)_\mathrm{ad} & \frac{9}{4}\csc^2(\frac{\pi}{9}) & 1,[3]_9,[5]_9,[7]_9 & \mathrm{Vec} & 6 \\
\hline
\end{array}
\end{align*}
    \caption{The $57$ braided equivalence classes of premodular fusion categories of rank $4$, separated by rank of symmetric center from greatest (top) to least (bottom)}%
    \label{fig:catrank4}%
\end{figure}

\par There are six braided equivalence classes of symmetrically braided fusion categories of rank $4$ as seen in Figure \ref{fig:symm}; though $C_2^2$ has three nontrivial central elements of order $2$, these three braided fusion categories are related by braided autoequivalences permuting the nontrivial elements (see \cite[Section 8.4]{tcat}).  If $\mathcal{C}$ is a premodular fusion category of rank $4$ whose symmetric center is rank $3$, the symmetric center is Tannakian by Figure \ref{fig:symm} and Proposition \ref{prop:tan} implies $\mathcal{C}$ is a near-group braided fusion category with the fusion rules of $\mathrm{Rep}(A_4)$; there is a unique braided equivalence class of nonsymmetrically braided fusion categories of this type (see Example \ref{braidfin}) which is equivalent as a fusion category to $\mathrm{Rep}(A_4)$ so we simply decorate $\mathrm{Rep}(A_4)^\epsilon$ with a primitive second root of unity $\epsilon$ analogously to the $\mathrm{Rep}(S_3)^\omega$ notation in the rank $3$ classification.

\par A premodular fusion category of rank $4$ with a symmetric center of rank $2$ has a symmetric center which is braided equivalent to $\mathrm{Rep}(C_2,\nu)$.  The fusion rules in the case of $\mathrm{sVec}$ were described in \cite{MR4109138}; if the category does not factor, then there is a unique set of fusion rules, whose categorifications as fusion categories are classified in \cite[Theorem A.3]{MR4486913}.  In particular $\mathcal{C}$ is equivalent to $\mathcal{C}(A_1,8,q)_\mathrm{ad}$ where $q^2$ is a primitive $8$th root of unity; there are two braided equivalence classes of such categories \cite[Pg. 151]{vercleyen2024lowrank}.

\par If $C_\mathcal{C}(\mathcal{C})\simeq\mathrm{Rep}(C_2)$, then $C_\mathcal{C}(\mathcal{C})$ is a maximal Tannakian fusion subcategory measured by dimension, thus $\mathcal{C}_A$ is modular where $A$ is the regular algebra of $\mathrm{Rep}(C_2)$ (see Section \ref{sec:deeq}).  When the two invertible objects act on the remaining two isomorphism classes of simple objects transitively, then $\mathcal{C}$ is a braided generalized near-group fusion category; those with irrational dimension were classified in \cite{MR4658217} and are products of $\mathrm{Rep}(C_2)$ and $\mathcal{C}(A_1,5,q)_\mathrm{ad}$ where $q^2$ is any primitive fifth root of unity.  In the case $\mathcal{C}$ is weakly integral, the category $\mathcal{C}_A$ is an integral modular fusion category of rank $2$; the $C_2$-action on $\mathcal{C}_A$ is trivial, hence $\mathcal{C}$ is integral of Frobenius-Perron dimension $4$, and must be pointed.  Thus either $\mathcal{C}$ factors or $\mathcal{C}\simeq\mathcal{C}(C_4,q_{\pm i})$ is a braided equivalence where $q_{\pm i}:C_4\to\mathbb{C}^\times$ takes the value $\pm i$ on a generator and $1$ on the element of order $2$.   In the case the symmetric center acts trivially on the remaining isomorphism classes of simple objects, $\mathcal{C}_A$ is modular of rank $5$ with two pairs of simple objects of equal dimensions.  There are only two such possible modular data which are both pointed of rank 5 \cite[Appendix E.4]{ng2023classification}, hence the dimensions of the nontrivial simple objects of $\mathcal{C}$ are both $2$ and these objects are self-dual as they have distinct twists.  Since there is a unique pointed fusion category of rank $5$ which is braided, $\mathrm{Rep}(C_5)$, and a unique fixed-point free $C_2$-action on $\mathrm{Rep}(C_5)$, then there is a unique fusion category, $\mathrm{Rep}(D_5)$, up to equivalence with simple objects of dimension $1,1,2,2$ which is braided.  But for each choice of the three braidings on $\mathrm{Rep}(C_5)$, the fixed-point free autoequivalence is braided, giving three distinct braided fusion categories which we denote by $\mathrm{Rep}(D_5)^\mu$ where $\mu=\zeta_5^a$ where $a$ is either a quadratic residue or quadratic nonresidue modulo $5$.


\subsection{Premodular categories of rank $5$}

A classification of premodular categories of rank $5$ was purported in \cite[Theorem I.1]{MR3743161} which is correct up to Grothendieck equivalence.  But categories are missing from the main theorem and one can be more precise with the remainder of the classification.  The absent categories are those braided Tambara-Yamagami fusion categories with symmetric center braided equivalent to $\mathrm{Rep}(C_2^2)$; the four braided equivalence classes of this type were collected in \cite[Figure 4]{argentina} and are equivalent as fusion categories to either $\mathrm{Rep}(D_4)$ or $\mathrm{Rep}(Q_8)$.  Our argument is based on the results from Section \ref{seccat} and results which have appeared since \cite{MR3743161} was published.  Our argument is organized by the rank of the symmetric center for ease of reading, with rank $4$ and $5$ Tannakian subcategories described in Figure \ref{fig:catrank5} and ranks less than or equal to $3$ described in Figure \ref{fig:catrank5b}.

\subsubsection{Symmetric center of rank $4$ or $5$}

\begin{figure}[H]
\centering
\begin{align*}
\begin{array}{|c|c|c|c|c|}
\hline \mathcal{C} & \mathrm{FPdim}(\mathcal{C}) & \mathrm{FPdims} & C_\mathcal{C}(\mathcal{C}) & \# \\\hline\hline
\mathrm{Rep}(C_5) & 5 & 1,1,1,1,1 & \mathrm{Rep}(C_5) & 1\\
\mathrm{Rep}(D_4,\nu) & 8 & 1,1,1,1,2 & \mathrm{Rep}(D_4,\nu) & 2 \\
\mathrm{Rep}(Q_8,\nu) & 8 & 1,1,1,1,2 & \mathrm{Rep}(Q_8,\nu) & 2  \\
\mathrm{Rep}(D_7) & 14 & 1,1,2,2,2 & \mathrm{Rep}(D_7) & 1  \\
\mathrm{Rep}(F_5) & 20 & 1,1,1,1,4 & \mathrm{Rep}(F_5) & 1  \\
\mathrm{Rep}(C_7\rtimes C_3) & 21 & 1,1,1,3,3 & \mathrm{Rep}(C_7\rtimes C_3) & 1 \\
\mathrm{Rep}(S_4) & 24 & 1,1,2,3,3 & \mathrm{Rep}(S_4) & 1  \\
\mathrm{Rep}(A_5) & 60 & 1,3,3,4,5 & \mathrm{Rep}(A_5) & 1  \\
\hline\hline
\mathrm{Rep}(D_4)^{\pm i} & 8 & 1,1,1,1,2 & \mathrm{Rep}(C_2^2)  & 2 \\
\mathrm{Rep}(Q_8)^{\pm i} & 8 &  1,1,1,1,2 & \mathrm{Rep}(C_2^2) & 2 \\
\hline
\end{array}
\end{align*}
    \caption{The $14$ braided equivalence classes of rank $5$ braided fusion categories with Tannakian subcategory of maximal rank $5$ (above) or $4$ (below)}%
    \label{fig:catrank5}%
\end{figure}

The case when $\mathcal{C}$ is symmetrically braided is described by Figure \ref{fig:symm}, so we assume $C_\mathcal{C}(\mathcal{C})$ is a maximal fusion subcategory; in particular, $\mathcal{C}=\mathcal{C}(C_\mathcal{C}(\mathcal{C}),\kappa)$ is a non-symmetrically braided near-integral fusion category for some $\kappa\in\mathbb{Z}_{\geq0}$.  Evidently $\mathcal{C}$ has rank larger than $2$, so Proposition \ref{prop:fib} implies $\dim(\mathcal{C})\in\mathbb{Z}$. Propositions \ref{prop:tan} and \ref{prop:stan} then state the only options for $\kappa\in\mathbb{Z}$ are $0$, $\sqrt{N/2}$ or $\sqrt{4N/3}$.  As $N$ is restricted to the orders in the first column of Figure \ref{fig:symm}, the latter roots are integers only in one case: $N=12$, $\kappa=4$, $d_+=6$, $\dim(\mathcal{C})=48$, and $\theta_\rho=-1$.  The de-equivariantization by the three invertible objects produces a category whose invertible objects form the group $C_2^2$, and $3$ simple objects of dimension $2$, with a fixed-point-free action of $C_3$ by tensor autoequivalences.  It was shown in \cite[Proposition 6.1]{schopieray2023fixedpointfree} that such a category cannot exist.  Lastly, if $\kappa=0$ then $N$ must be a perfect square when $C_\mathcal{C}(\mathcal{C})$ is Tannakian by Proposition \ref{prop:tan} as $\mathcal{C}$ is integral, which only occurs if $\mathcal{C}$ is a near-group braided fusion category, or more specifically, a braided Tambara-Yamagami fusion category whose twist on the noninvertible element is $\pm i$.  There are $4$ inequivalent categories of this type described in detail in \cite[Figure 4]{argentina}.  Otherwise $C_\mathcal{C}(\mathcal{C})$ is super Tannakian, and thus pointed, and again $\mathcal{C}$ is a Tambara-Yamagami braided fusion category.  But the symmetric centers of such categories are rank $2$ so they do not belong to this case.

\subsubsection{Symmetric center of rank $3$ or less}

\begin{figure}[H]
\centering
\begin{align*}
\begin{array}{|c|c|c|c|c|}
\hline \mathcal{C} & \mathrm{FPdim}(\mathcal{C}) & \mathrm{FPdims} & C_\mathcal{C}(\mathcal{C}) & \# \\\hline\hline
\mathcal{C}(C_2^2,q_-)^{S_3} & 24 & 1,1,2,3,3 & \mathrm{Rep}(S_3) & 1  \\\hline\hline
(\mathcal{I}_{q_1}\boxtimes\mathcal{I}_{q_2})_\mathbb{Q} & 8 & 1,1,1,1,2 & \mathrm{Rep}(C_2) & 12 \\
\mathrm{Rep}(D_7)^\psi & 14 & 1,1,2,2,2 & \mathrm{Rep}(C_2) & 2  \\
\mathcal{C}(A_1,10,q)_\mathrm{ad} & 10[3]_5^2 & 1,1,2[3]_5,[3]_5^2,[3]_5^2 & \mathrm{Rep}(C_2) & 4  \\\hline\hline
\mathcal{C}(C_5,q) & 5 & 1,1,1,1,1 & \mathrm{Vec} & 2 \\
(\mathrm{Rep}(S_3)^\omega)^\gamma & 12 & 1,1,2,\sqrt{3},\sqrt{3} & \mathrm{Vec} & 4 \\
\mathcal{C}(A_1,11,q)_\mathrm{ad} & \frac{11}{4}\csc^2(\pi/11) & 1,[2]_{11},[4]_{11}, & \mathrm{Vec} & 10 \\
& &[6]_{11},[8]_{11} & & \\
\mathcal{C}(A_2,7,q)_\mathrm{ad} &\frac{7^2}{2^8}\csc^6(\frac{\pi}{7})\sec^2(\frac{\pi}{7}) & 1,\frac{[4]_7[5]_7}{[2]_7},\frac{[4]_7[5]_7}{[2]_7}, & \mathrm{Vec} & 6 \\
& & [2]_7[4]_7,\frac{[3]_7^2[6]_7}{[2]_7}& & \\
\hline
\end{array}
\end{align*}
    \caption{The $41$ braided equivalence classes of premodular fusion categories of rank $5$ with rank $\leq3$ maximal Tannakian subcategory}%
    \label{fig:catrank5b}%
\end{figure}


\paragraph{Rank $3$ symmetric center}

Assume first that the symmetric center is braided equivalent to $\mathrm{Rep}(C_3)$.  Then $\mathcal{O}(\mathrm{Rep}(C_3))$ acts trivially on the two remaining isomorphism classes of simple objects by the orbit-stabilizer theorem.  We must have $C_\mathcal{C}(\mathcal{C})\simeq\mathrm{Rep}(C_3)$ is maximal Tannakian since any maximal subcategory which is Tannakian would necessarily be contained in the center by the balancing equation \cite[Proposition 8.13.8]{tcat}.  Moreover with $A$ the regular algebra of $\mathrm{Rep}(C_3)$, $\mathcal{C}_A$ is a modular fusion category of rank $7$ with two pairs of three isomorphism classes of simple objects of the same dimension and the same twists.  There is no such modular data \cite[Appendix E.6]{ng2023classification}, so this case produces no examples.  Otherwise $C_\mathcal{C}(\mathcal{C})$ is braided equivalent to $\mathrm{Rep}(S_3)$.  Let $\rho_1,\rho_2\in\mathcal{O}(\mathcal{C})\setminus\mathcal{O}(\mathrm{Rep}(S_3))$ be the other isomorphism classes of simple objects.  The rank of $\mathcal{C}_A$ where $A$ is the regular algebra of $\mathrm{Rep}(C_2)\subset\mathrm{Rep}(S_3)$ is rank $4$ or $7$, depending on whether the nontrivial invertible element of $\mathrm{Rep}(S_3)$ acts trivially or transitively on $\mathcal{O}(\mathcal{C})\setminus\mathcal{O}(\mathrm{Rep}(S_3))$.  In either case $C_\mathcal{C}(\mathcal{C})$ is maximal Tannakian by Figure \ref{fig:symm}.  Then $\mathcal{C}_A$ is modular and by Lemma 10 below, if $\mathcal{O}(\mathrm{Rep}(S_3))$ does not act trivially on $\mathcal{O}(\mathcal{C})\setminus\mathcal{O}(\mathrm{Rep}(S_3))$, then $\mathcal{C}$ is integral of dimension $6$, $12$, $18$ or $24$.  The only possible dimensions for the nontrivial simple objects of $\mathcal{C}$ are then $1,1,2,3,3$ by a brute-force check, hence $\mathcal{C}$ is an $S_3$-equivariantization of $\mathcal{C}(C_2^2,q_-)$ where $q_-$ takes the value $-1$ on all nontrivial elements of $C_2^2$, and the $S_3$-action on $\mathcal{C}(C_2^2,q_-)$ by braided autoequivalences is transitive on the nontrivial invertibles.  Such a category is unique up to braided equivalence and while the fusion rules are those of $\mathrm{Rep}(S_4)$ but we emphasize that $\mathcal{C}(C_2^2,q_-)^{S_3}$ is not equivalent to $\mathrm{Rep}(S_4)$ as a fusion category as all braidings on $\mathrm{Rep}(S_4)$ are symmetric by \cite[Theorem 3.2]{MR3943750} (see Example \ref{braidfin}).

Otherwise, $\mathcal{O}(\mathrm{Rep}(S_3))$ acts trivially on $\{\rho_1,\rho_2\}:=\mathcal{O}(\mathcal{C})\setminus\mathcal{O}(\mathrm{Rep}(S_3))$, hence $[A\otimes \rho_j,A\otimes \rho_j]=[\rho_j,A\otimes \rho_j]=6$ \cite[Lemma 7.8.12]{tcat}.  Each of the free $A$-modules $A\otimes\rho_j$ thus decomposes into $3$ or $6$ distinct isomorphism classes of simple objects.  If both $A\otimes\rho_j$ for $j=1,2$ decompose into three, then $\mathcal{C}_A$ is rank $7$ with two pairs of two isomorphism classes of simple objects of the same dimension, i.e.\ those simple $X$ with $F(X)=\rho_j$, and one simple object each of twice these dimensions.  By inspection no such modular data exists \cite[Appendix E.6]{ng2023classification}.  If exactly one of $A\otimes\rho_j$ for $j=1,2$ decomposes into three nonisomorphic simple objects, then $\mathcal{C}_A$ is rank $10$ with six distinct isomorphism classes of simple objects of the same dimenson.  By inspection any such modular data is pointed \cite[Appendix E.6]{ng2023classification}; since the automorphism group of $C_{10}$ is cyclic of order $4$, there are no $S_3$-actions by braided autoequivalences with an orbit of size $6$, thus there are no examples in this case.  Lastly, if both $A\otimes\rho_j$ decompose into $6$ nonisomorphic simple objects, then $\dim(\mathcal{C}_A)=1+6d_1^2+6d_2^2$ for some algebraic integers $d_1,d_2$, thus $\dim(\mathcal{C})-d_1^2-d_2^2=1/6$ is an algebraic integer.  Therefore this case produces no examples either.

\paragraph{Rank $2$ symmetric center}

If there exists a Tannakian subcategory of rank $4$, it would necessarily contain the symmetric center by the balancing equation, hence any maximal Tannakian subcategory of $\mathcal{C}$ is rank less than or equal to $3$.  If there is a maximal Tannakian subcategory of rank $3$, its centralizer is a maximal fusion subcategory and the preceding arguments show no such category exists.  Moreover, $C_\mathcal{C}(\mathcal{C})$ is either a maximal Tannakian subcategory, or $C_\mathcal{C}(\mathcal{C})\simeq\mathrm{sVec}$.  In the latter case, the nontrivial invertible $\delta$ with $\theta_\delta=-1$ acts fixed-point-freely on $\mathcal{O}(\mathcal{C})$, thus $\mathrm{rank}(\mathcal{C})$ must be even, violating our assumptions.  Otherwise $C_\mathcal{C}(\mathcal{C})\simeq\mathrm{Rep}(C_2)$ and $\mathcal{C}_A$, where $A$ is the regular algebra of $\mathrm{Rep}(C_2)$, is a modular fusion category of rank $4$ or $7$, depending on whether the action of the nontrivial invertible on the $3$ isomorphism classes of simple objects not contained in the Tannakian subcategory has one fixed-point, or three, respectively.

\par Modular data of rank $7$ with three pairs of isomorphism classes of simple objects of the same dimensions are pointed \cite[Appendix E.6]{ng2023classification} which implies the dimensions of simple objects of $\mathcal{C}$ are $1,1,2,2,2$.  There are two braided inequivalent pointed modular fusion categories of rank $7$ each with a unique fixed-point free $C_2$-action by braided autoequivalences, giving two braided equivalence classes in this case.  As there is a unique pointed fusion category of rank $7$ with a braiding up to equivalence, both of these $C_2$-equivariantizations are equivalent to $\mathrm{Rep}(D_7)$ as fusion categories so we denote them $\mathrm{Rep}(D_7)^\psi$ where $\psi$ is a primitive $7$th root of unity.  Otherwise $\mathcal{C}_A$ is modular of rank $4$ and has a pair of simple objects of the same dimension and twist.  Therefore $\mathcal{C}_A$ is pointed on the group $C_2^2$ or $\mathcal{C}_A\simeq\mathcal{C}(A_1,5,q)^{\boxtimes2}$ is a braided equivalence where $q^2$ is a primitive fifth root of unity.  In the latter case, $\mathcal{C}$ would have Frobenius-Perron dimensions of simple objects $1,1,2[3]_5,[3]_5^2,[3]_5^2$.  It is elementary to deduce the fusion rules of $\mathcal{C}$ are those of $\mathcal{C}(A_1,10,q)_\mathrm{ad}$ where $q^2$ is any primitive $10$th root of unity.  There are four braided equivalence classes of such categories.  Otherwise we may then conclude $\mathcal{C}$ is a Tambara-Yamagami braided fusion category.  There are $12$ braided equivalence classes of such categories, all which can be realized as the rational subcategories of various products of Ising braided fusion categories \cite[Figure 5]{argentina}.

\paragraph{Rank $1$ symmetric center}

Up to Galois conjugacy, there are only five possible modular data \cite[Appendix E.4]{ng2023classification} of rank $5$.  One set of the five modular data is pointed, producing two braided equivalence classes (see Example \ref{ex:braidgroup}).  Two of the sets of five modular data are those of minimal modular extensions of $\mathrm{Rep}(S_3)^\omega$ from Figure \ref{fig:catranklessthan4}; since $C_\mathcal{C}(\mathrm{Rep}(S_3)^\omega)\simeq\mathrm{Rep}(C_2)$ is a braided equivalence, there are at most $|H^3(C_2,\mathbb{C}^\times)|=2$ different braided equivalence classes of such categories for each $\mathrm{Rep}(S_3)^\omega$ \cite[Theorem 4.22]{MR3613518}.  Denote the elements of $H^3(C_2,\mathbb{C}^\times)$ by $\gamma$ for use in Figure \ref{fig:catrank5}.  The embeddings of $\mathrm{Rep}(S_3)^\omega$ into these extensions are unique, thus there are two braided inequivalent $C_2$-extensions of each $\mathrm{Rep}(S_3)^\omega$. The eight modular data come from the fact that there are two distinct spherical structures on each of the four aforementioned categories.  One set of the five modular data includes the $10$ modular data of the transitive modular fusion categories $\mathcal{C}(A_1,11,q)_\mathrm{ad}$, classified by \cite{MR4389082}.  The last remaining set of the five modular data includes the six Galois conjugates of $\mathcal{C}(A_2,7,q)_\mathrm{ad}$.  Any braided fusion category with these modular data is braided equivalent to one of these six by \cite[Theorem 5.4]{MR4635615}.


\subsection{Premodular categories of rank $6$}

Here we organize our classification slightly differently than the preceding sections for ease of reading.  Our argument is based on the rank of any Tannakian fusion subcategory of maximal rank in decreasing order, and subdivide each case into arguments based on how the symmetric center relates to such a Tannakian fusion subcategory.


\subsubsection{Tannakian subcategory has maximal rank $5$ or $6$}\label{subsec5or6}

\begin{figure}[H]
\centering
\begin{align*}
\begin{array}{|c|c|c|c|c|}
\hline \mathcal{C} & \mathrm{FPdim}(\mathcal{C}) & \mathrm{FPdims} & C_\mathcal{C}(\mathcal{C}) & \# \\\hline\hline
\mathrm{Rep}(C_6) & 6 & 1,1,1,1,1,1 & \mathrm{Rep}(C_6) & 1 \\
\mathrm{Rep}(D_6) & 12 & 1,1,1,1,2,2 & \mathrm{Rep}(D_6) & 1 \\
\mathrm{Rep}(\mathrm{Dic}_3) & 12 & 1,1,1,1,2,2 & \mathrm{Rep}(\mathrm{Dic}_3) & 1\\
\mathrm{Rep}(D_9) & 18 & 1,1,2,2,2,2 & \mathrm{Rep}(D_9) & 1\\
\mathrm{Rep}(C_3\rtimes S_3) & 18 & 1,1,2,2,2,2 & \mathrm{Rep}(C_3\rtimes S_3) & 1\\
\mathrm{Rep}(C_3^2\rtimes C_4) & 36 & 1,1,1,1,4,4 & \mathrm{Rep}(C_3^2\rtimes C_4) & 1\\
\mathrm{Rep}(\mathrm{PSU}(3,2)) & 72 & 1,1,1,1,2,8 & \mathrm{Rep}(\mathrm{PSU}(3,2)) & 1\\
\mathrm{Rep}(\mathrm{GL}(3,2)) & 168 & 1,3,3,6,7,8 & \mathrm{Rep}(\mathrm{GL}(3,2)) & 1\\
\hline
\end{array}
\end{align*}
    \caption{The $8$ braided equivalence classes of premodular fusion categories of rank $6$ with Tannakian subcategory $\mathcal{D}$ of maximal rank $6$; there are none with $\mathrm{rank}(\mathcal{D})=5$.}%
    \label{fig:catrank56}%
\end{figure}

\par Tannakian categories of rank $6$ are described in Figure \ref{fig:symm} above.  Now assume $\mathrm{Rep}(G)\simeq\mathcal{D}\subset\mathcal{C}$ is a maximal Tannakian subcategory of rank $5$.  If $\mathcal{C}$ is symmetrically braided, then $\mathcal{C}$ would be one of the three categories described in Figure \ref{fig:symm} with $\nu\neq e$: $\mathrm{Rep}(C_6,\nu)$, $\mathrm{Rep}(D_6,\nu)$ or $\mathrm{Rep}(\mathrm{Dic}_3,\nu)$.  But none have Tannakian subcategories of rank $5$.  Otherwise, $\mathcal{C}=\mathcal{C}(\mathcal{D},\kappa)$ is a non-symmetrically braided near-integral fusion category for some $\kappa\in\mathbb{Z}_{\geq0}$.  Evidently $\mathcal{C}$ is not a rank $2$ category, so Proposition \ref{prop:fib} implies $\dim(\mathcal{C})\in\mathbb{Z}$. Proposition \ref{prop:tan} then states the only options for $\kappa\in\mathbb{Z}$ are $0$, $\sqrt{N/2}$ or $\sqrt{4N/3}$.  As $N$ is restricted to the orders in the second column of Figure \ref{fig:symm}, the latter roots are integers in only one case: $N=8$, $\kappa=2$, $d_+=4$, and $\dim(\mathcal{C})=24$.  The formal codegrees of such a fusion category are $24,12,8,8,8,4$ by Corollary \ref{cor:form} which implies the simple summands of the induction of the unit $I(e)$ have dimensions $1,2,3,3,3,6$ \cite[Theorem 2.13]{ost15}.  But $[\rho,F(I(e))]=2$, and only the simple object of dimension $6$ can forget to a single copy of $\rho$.  So no such fusion category exists.  Lastly, if $\kappa=0$ then $N$ must be a perfect square as $\mathcal{C}$ is integral, which never occurs by observing $|G|$ is not a perfect square for the groups in the second column of Figure \ref{fig:symm}.


\subsubsection{Tannakian subcategory has maximal rank $4$}

\par Let $\mathrm{Rep}(G)\simeq\mathcal{D}\subset\mathcal{C}$ be any Tannakian fusion subcategory of rank $4$ and let $\rho_1,\rho_2\in\mathcal{O}(\mathcal{C})$ be the isomorphism classes of simple objects not contained in $\mathcal{D}$.  We will first prove that with four exceptions unrelated to our classification of rank $6$ premodular fusion categories, if $\mathrm{rank}(\mathcal{D})+2=\mathrm{rank}(\mathcal{C})$ for any Tannakian subcategory $\mathcal{D}\subset\mathcal{C}$ of a premodular fusion category $\mathcal{C}$, then $\mathcal{C}$ is integral or the symmetric center of $\mathcal{C}$ acts trivially on $\rho_1,\rho_2$ (Lemma \ref{lem:4}).  This result allows a systematic study of premodular fusion categories in these two cases.  The resulting classification is given in Figure \ref{fig:rank456}.

\begin{figure}[H]
\centering
\begin{align*}
\begin{array}{|c|c|c|c|c|}
\hline \mathcal{C} & \mathrm{FPdim}(\mathcal{C}) & \mathrm{FPdims} & C_\mathcal{C}(\mathcal{C}) & \# \\\hline\hline
\mathrm{Rep}(C_2)\boxtimes\mathrm{Rep}(S_3)^\omega & 12 & 1,1,1,1,2,2 & \mathrm{Rep}(C_2^2) & 2\\
\mathrm{Rep}(\mathrm{Dic}_3)^\omega & 12 & 1,1,1,1,2,2 & \mathrm{Rep}(C_4) & 2\\
\mathrm{Rep}(C_3^2\rtimes C_4)^\omega & 36 & 1,1,1,1,4,4 & \mathrm{Rep}(C_4) & 1 \\
\hline
\end{array}
\end{align*}
    \caption{The $5$ braided equivalence classes of premodular fusion categories of rank $6$ with Tannakian subcategory of maximal rank $4$}%
    \label{fig:rank456}%
\end{figure}

\paragraph{Tannakian subcategories $\mathcal{D}\subset\mathcal{C}$ with $\mathrm{rank}(\mathcal{D})+2=\mathrm{rank}(\mathcal{C})$}

\begin{lemma}\label{lem:quadr}
Let $\mathcal{C}$ be a fusion category and $\mathcal{D}\subset\mathcal{C}$ be an integral fusion subcategory of $\mathcal{C}$ with $\mathrm{rank}(\mathcal{D})+2=\mathrm{rank}(\mathcal{C})$.  Denote the elements of $\mathcal{O}(\mathcal{C})\setminus\mathcal{O}(\mathcal{D})$ by $\rho_1,\rho_2$.  If there exists $X\in\mathcal{O}(\mathcal{D})$ such that $N_{X,\rho_1}^{\rho_2}$ or $N_{X,\rho_2}^{\rho_1}$ is nonzero, then $\dim(\rho_1)$ and $\dim(\rho_2)$ are quadratic integers (possibly rational).
\end{lemma}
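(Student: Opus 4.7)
The plan is to combine the hypothesized off-diagonal fusion coefficient with the self-fusion decomposition of $\rho_1$ to exhibit a monic quadratic polynomial over $\mathbb{Q}$ having $\dim(\rho_1)$ as a root. By Frobenius reciprocity $N_{Y,\rho_2}^{\rho_1}=N_{Y^*,\rho_1}^{\rho_2}$, so replacing $X$ by $X^*$ if necessary we may assume there exists $X\in\mathcal{O}(\mathcal{D})$ with $q:=N_{X,\rho_1}^{\rho_2}\neq 0$. Decomposing
\[
X\otimes\rho_1 \;=\; W_X \oplus p\,\rho_1 \oplus q\,\rho_2, \qquad W_X\in\mathcal{D},\ p\in\mathbb{Z}_{\geq 0},
\]
and setting $u:=\dim(\rho_1)$, $v:=\dim(\rho_2)$, the integrality of $\mathcal{D}$ makes $\dim(X)$, $\dim(W_X)$, $p$, and $q$ all rational integers, so equating Frobenius--Perron dimensions gives
\[
\dim(X)\,u \;=\; \dim(W_X)+p\,u+q\,v,
\]
which (since $q\neq 0$) can be solved for $v=a\,u+b$ with $a,b\in\mathbb{Q}$.

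Next, consider the decomposition $\rho_1^{\otimes 2}=W\oplus e\rho_1\oplus f\rho_2$, with $W\in\mathcal{D}$ and $e,f\in\mathbb{Z}_{\geq 0}$. Taking dimensions and substituting $v=au+b$ yields
\[
u^2 \;=\; \dim(W) + e\,u + f\,v \;=\; \bigl(\dim(W)+fb\bigr) + (e+fa)\,u,
\]
so $u$ is a root of the monic polynomial $T^2-(e+fa)T-\bigl(\dim(W)+fb\bigr)\in\mathbb{Q}[T]$. This forces $[\mathbb{Q}(u):\mathbb{Q}]\leq 2$.

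Finally, since $u=\dim(\rho_1)$ is the Perron--Frobenius eigenvalue of the nonnegative integer fusion matrix $N_{\rho_1}$, it is an algebraic integer, and lying in a number field of degree at most two over $\mathbb{Q}$ it is a quadratic integer (possibly rational). Then $v=au+b$ lies in $\mathbb{Q}(u)$, and since $v$ is also an algebraic integer as the Perron--Frobenius eigenvalue of $N_{\rho_2}$, it too is a quadratic integer. The only delicate point is guaranteeing the coefficients of the quadratic for $u$ are rational rather than merely algebraic, which is precisely where the integrality hypothesis on $\mathcal{D}$ is used; everything else is a routine substitution using Frobenius reciprocity and dimension-counting.
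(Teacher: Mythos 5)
Your proof is correct and follows essentially the same route as the paper: use the nonzero coefficient $N_{X,\rho_1}^{\rho_2}$ to express $\dim(\rho_2)$ rationally in terms of $\dim(\rho_1)$, then read off a monic quadratic over $\mathbb{Q}$ from $\rho_1^{\otimes 2}$. The only cosmetic difference is that the paper notes $X\otimes\rho_1$ can contain no summands from $\mathcal{D}$ (so your $W_X=0$ and $\dim(\rho_2)/\dim(\rho_1)\in\mathbb{Q}$ exactly), but your more general affine relation $v=au+b$ works just as well.
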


\begin{proof}
Since $\mathcal{D}$ is a fusion subcategory, for any $X,Y\in\mathcal{O}(\mathcal{D})$, $0=N_{Y^\ast,X}^{\rho_j^\ast}=N_{X,\rho_j}^Y$.  So the only potentially nonzero fusion coefficients for $X\otimes\rho_1$ are $N_{X,\rho_1}^{\rho_1}$ and $N_{X,\rho_1}^{\rho_2}$.  Therefore
\begin{align}
&&\dim(X)\dim(\rho_1)&=N_{X,\rho_1}^{\rho_1}\dim(\rho_1)+N_{X,\rho_1}^{\rho_2}\dim(\rho_2) \\
\Rightarrow&&q:=\dfrac{\dim(X)-N_{X,\rho_1}^{\rho_1}}{N_{X,\rho_1}^{\rho_2}}&=\dfrac{\dim(\rho_2)}{\dim(\rho_1)}\in\mathbb{Q},
\end{align}
if $N_{X,\rho_1}^{\rho_2}\neq0$, without loss of generality.  Therefore by measuring $\dim(\rho_1\otimes\rho_1)$,
\begin{equation}
\dim(\rho_1)^2=r+s\dim(\rho_1)
\end{equation}
where $r\in\mathbb{Z}$ and $s=N_{\rho_1,\rho_1}^{\rho_1}+N_{\rho_1,\rho_1}^{\rho_2}q\in\mathbb{Q}$.  The argument is symmetric in $\rho_1$ and $\rho_2$. Moreover $\dim(\rho_1)$ and $\dim(\rho_2)$ are both quadratic integers.
\end{proof}

\begin{lemma}\label{lem:threee}
Let $\mathcal{C}$ be a premodular fusion category and $\mathcal{D}\subset\mathcal{C}$ be a Tannakian subcategory of $\mathcal{C}$ with $\mathrm{rank}(\mathcal{D})+2=\mathrm{rank}(\mathcal{C})$.  Then at least one of the following is true:
\begin{enumerate}
\item $\mathcal{C}$ is symmetrically braided,
\item $\mathcal{D}\subsetneq C_\mathcal{C}(\mathcal{C})$ and $C_\mathcal{C}(\mathcal{C})$ is a maximal fusion subcategory of $\mathcal{C}$, or
\item $C_\mathcal{C}(\mathcal{C})\subset\mathcal{D}$.
\end{enumerate}
\end{lemma}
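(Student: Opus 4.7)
The plan is to prove the following dichotomy:  either $\mathcal{Z}:=C_\mathcal{C}(\mathcal{C})$ is contained in $\mathcal{D}$, or $\mathcal{D}$ is contained in $\mathcal{Z}$.  Let $\rho_1,\rho_2$ be the two simple objects of $\mathcal{C}$ not in $\mathcal{D}$.  Because $\mathrm{rank}(\mathcal{C})=\mathrm{rank}(\mathcal{D})+2$, the fusion subcategories of $\mathcal{C}$ containing $\mathcal{D}$ are exactly $\mathcal{D}$, $\mathcal{C}$, and possibly $\langle\mathcal{D},\rho_i\rangle$ for $i=1,2$ (when proper), each necessarily of rank $\mathrm{rank}(\mathcal{D})+1=\mathrm{rank}(\mathcal{C})-1$ and hence maximal in $\mathcal{C}$.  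So once the dichotomy is in hand, the three conclusions follow by reading off $\mathcal{Z}$:  $\mathcal{Z}\subseteq\mathcal{D}$ gives conclusion $(3)$; $\mathcal{Z}=\mathcal{C}$ gives $(1)$; and $\mathcal{D}\subsetneq\mathcal{Z}\subsetneq\mathcal{C}$ forces $\mathcal{Z}$ to be a maximal fusion subcategory of $\mathcal{C}$, giving $(2)$.

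For the dichotomy I will prove the contrapositive: if $\rho_1\in\mathcal{Z}$ (the case $\rho_2\in\mathcal{Z}$ being symmetric), then $\mathcal{D}\subseteq\mathcal{Z}$.  An arbitrary $X\in\mathcal{D}$ already centralizes $\mathcal{D}$ (which is symmetric) and $\rho_1\in\mathcal{Z}$, so only the centralization of $\rho_2$ requires verification.  Since $\theta_X=1$ and the simple summands of $X\otimes\rho_2$ lie in $\{\rho_1,\rho_2\}$ (as $\mathcal{D}$ is a fusion subcategory), the balancing equation applied as in the proof of Lemma \ref{lem:one} yields
\begin{equation*}
S_{X,\rho_2}-\dim(X)\dim(\rho_2)=\left(\theta_{\rho_1}\theta_{\rho_2}^{-1}-1\right)N_{X,\rho_2}^{\rho_1}\dim(\rho_1),
\end{equation*}
so $X$ centralizes $\rho_2$ unless both $\theta_{\rho_1}\neq\theta_{\rho_2}$ and $N_{X,\rho_2}^{\rho_1}\neq0$.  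Applying the same balancing equation to the identity $S_{\rho_1,\rho_2}=\dim(\rho_1)\dim(\rho_2)$ (which holds because $\rho_1\in\mathcal{Z}$) produces a linear relation in the nonnegative reals $\alpha_0:=\sum_{Z\in\mathcal{D}}N_{\rho_1,\rho_2}^{Z}\dim(Z)$, $\alpha_1:=N_{\rho_1,\rho_2}^{\rho_1}\dim(\rho_1)$, and $\alpha_2:=N_{\rho_1,\rho_2}^{\rho_2}\dim(\rho_2)$, whose coefficients depend only on $\theta_{\rho_1}$ and $\theta_{\rho_2}$.

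The remainder is a short case analysis.  Since $\rho_1$ lies in the symmetric fusion category $\mathcal{Z}$, its twist $\theta_{\rho_1}\in\{\pm 1\}$; the sub-cases with $\theta_{\rho_1}=\theta_{\rho_2}$ are immediate.  In the sub-cases $\theta_{\rho_1}=1,\,\theta_{\rho_2}\neq 1$ and $\theta_{\rho_1}=-1,\,\theta_{\rho_2}=1$, the linear relation collapses (using $\alpha_i\geq 0$) to $\alpha_0=0$; Frobenius reciprocity combined with the fact that $\rho_2^\ast=\rho_1$ would force $\theta_{\rho_1}=\theta_{\rho_2}$ (hence ruled out) then yields $N_{X,\rho_2}^{\rho_1}=N_{\rho_1,\rho_2}^{X}=0$, as desired.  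The main obstacle is the sub-case $\theta_{\rho_1}=-1$ with $\theta_{\rho_2}$ a non-real root of unity: I will rule it out directly by extracting the imaginary part of the linear relation, which forces $\alpha_0+\alpha_1+2\alpha_2=0$ and hence all three $\alpha_i$ to vanish, contradicting $\rho_1\otimes\rho_2\neq 0$.  With this final case eliminated, every $X\in\mathcal{D}$ centralizes $\rho_2$ and the dichotomy is established.
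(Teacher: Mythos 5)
Your argument is correct in substance but follows a genuinely different route from the paper's. The paper never touches the twists or fusion coefficients of $\rho_1,\rho_2$: it studies the intermediate category $C_\mathcal{C}(\mathcal{D})$, which is squeezed between $\mathcal{D}$ and $\mathcal{C}$ because $\mathcal{D}$ is symmetric and $C_\mathcal{C}(\mathcal{C})\subseteq C_\mathcal{C}(\mathcal{D})$, and then disposes of the middle rank $\mathrm{rank}(\mathcal{D})+1$ by invoking the near-integral structure theory (Propositions \ref{prop:tan} and \ref{prop:stan}), since a corank-one fusion subcategory makes $\mathcal{C}$ near-integral. You instead prove the dichotomy $C_\mathcal{C}(\mathcal{C})\subseteq\mathcal{D}$ or $\mathcal{D}\subseteq C_\mathcal{C}(\mathcal{C})$ by a direct balancing-equation computation on $\rho_1,\rho_2$; the reduction of the three conclusions to this dichotomy is clean, and the key observations (the summands of $X\otimes\rho_2$ lie in $\{\rho_1,\rho_2\}$, the case $\rho_2^\ast=\rho_1$ forces $\theta_{\rho_1}=\theta_{\rho_2}$, $\theta_{\rho_1}\in\{\pm1\}$ because $\rho_1$ lies in a symmetric category, and the imaginary-part argument in the last sub-case) all check out. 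Your route is more elementary and self-contained, at the cost of a longer case analysis; the paper's route is shorter but leans on Section \ref{seccat}.

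One step should be tightened. You twice pass from $S_{X,Y}=\dim(X)\dim(Y)$ to ``$X$ and $Y$ centralize,'' and your case analysis uses $\alpha_i\geq0$; both require the categorical dimensions to be positive (the converse direction of \cite[Proposition 2.5]{mug1} needs this), and unlike Lemma \ref{lem:one} you cannot reduce to the pseudounitary case via Lemma \ref{cor:1}, because here $\mathcal{C}$ is an arbitrary premodular category. The repair is cheap and shortens the proof: use the exact criterion that simple $X,Y$ centralize if and only if $\theta_W=\theta_X\theta_Y$ for every simple $W$ with $N_{X,Y}^W\neq0$ (the double braiding acts on the $W$-isotypic component of $X\otimes Y$ by $\theta_W\theta_X^{-1}\theta_Y^{-1}$). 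Applied to $\rho_1\otimes\rho_2$, which centralize since $\rho_1\in C_\mathcal{C}(\mathcal{C})$, any summand $Z\in\mathcal{O}(\mathcal{D})$ forces $\theta_{\rho_1}\theta_{\rho_2}=\theta_Z=1$, hence $\theta_{\rho_2}=\theta_{\rho_1}$; so whenever $\theta_{\rho_1}\neq\theta_{\rho_2}$ one gets $N_{\rho_1,\rho_2}^Z=0$ for all $Z\in\mathcal{O}(\mathcal{D})$ with no sign considerations, and your Frobenius reciprocity step finishes exactly as written. This also absorbs your hardest sub-case ($\theta_{\rho_1}=-1$ with $\theta_{\rho_2}$ non-real) without the imaginary-part computation.
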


\begin{proof}
We have $\mathcal{D}\subset\mathcal{C}$ which implies $C_\mathcal{C}(\mathcal{C})\subset C_\mathcal{C}(\mathcal{D})$.  Note that $\mathcal{D}\subset C_\mathcal{C}(\mathcal{D})$ as well since $\mathcal{D}$ is Tannakian, hence $\mathrm{rank}(C_\mathcal{C}(\mathcal{D}))\in\{\mathrm{rank}(\mathcal{D}),\mathrm{rank}(\mathcal{D})+1,\mathrm{rank}(\mathcal{C})\}$.  In the case $\mathrm{rank}(C_\mathcal{C}(\mathcal{D}))=\mathrm{rank}(\mathcal{C})$, then $\mathcal{D}\subset C_\mathcal{C}(\mathcal{C})$.  Therefore either $\mathcal{C}$ is symmetrically braided, $C_\mathcal{C}(\mathcal{C})$ is a maximal fusion subcategory, or $\mathcal{D}=C_\mathcal{C}(\mathcal{C})$ and we are done.   If $\mathrm{rank}(C_\mathcal{C}(\mathcal{D}))=\mathrm{rank}(\mathcal{D})+1$, then $C_\mathcal{C}(\mathcal{D})$ is the unique maximal fusion subcategory which would imply $\mathcal{D}=C_\mathcal{C}(\mathcal{D})$ by Propositions \ref{prop:tan} and \ref{prop:stan}, which cannot occur by our assumption of rank.  Otherwise $C_\mathcal{C}(\mathcal{D})=\mathcal{D}$.  Therefore $C_\mathcal{C}(\mathcal{C})\subset\mathcal{D}$.
\end{proof}

\begin{lemma}\label{lem:4}
Let $\mathcal{C}$ be a premodular fusion category and $\mathcal{D}\subset\mathcal{C}$ be a Tannakian subcategory of $\mathcal{C}$ with regular algebra $A$ and $\mathrm{rank}(\mathcal{D})+2=\mathrm{rank}(\mathcal{C})$.  Denote the elements of $\mathcal{O}(\mathcal{C})\setminus\mathcal{O}(\mathcal{D})$ by $\rho_1,\rho_2$.  If there exists $X\in\mathcal{O}(C_\mathcal{C}(\mathcal{C}))$ such that $N_{X,\rho_1}^{\rho_2}$ or $N_{X,\rho_2}^{\rho_1}$ is nonzero, then either
\begin{enumerate}
\item $\mathcal{C}_A$ is integral of Frobenius-Perron dimension $1$, $2$, $3$, or $4$, hence $\mathcal{C}$ is integral, or 
\item $\mathcal{C}\simeq\mathrm{Rep}(C_2)\boxtimes\mathcal{C}(A_1,5,q)_\mathrm{ad}$ where $q^2$ is a primitive $5$th root of unity.
\end{enumerate}
\end{lemma}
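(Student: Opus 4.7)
The plan is to reduce to the de-equivariantization $\mathcal{C}_A$ and then bound its rank and dimension via Lemma \ref{lem:quadr}, the equivariantization dictionary, and the Galois lower bound for Frobenius-Perron dimensions. First I would invoke Lemma \ref{lem:threee}, which gives a trichotomy: either $\mathcal{C}$ is symmetrically braided, or $\mathcal{D}\subsetneq C_\mathcal{C}(\mathcal{C})$ with $C_\mathcal{C}(\mathcal{C})$ a maximal fusion subcategory, or $C_\mathcal{C}(\mathcal{C})\subseteq\mathcal{D}$. The middle case is incompatible with the hypothesis: $\mathcal{C}$ would then be near-integral over $S=C_\mathcal{C}(\mathcal{C})$ with one of $\rho_1,\rho_2$ (say $\rho_2$) the distinguished noninvertible generator and the other lying in $S$, so the defining relation $x\rho_2=\mathrm{FPdim}(x)\rho_2$ for $x\in S$ forces $N_{X,\rho_2}^{\rho_1}=0$ while closure of $S$ under fusion forces $N_{X,\rho_1}^{\rho_2}=0$. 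Hence we may take $X\in\mathcal{O}(\mathcal{D})$, and Lemma \ref{lem:quadr} then gives $\mathrm{FPdim}(\rho_1),\mathrm{FPdim}(\rho_2)$ quadratic integers with rational ratio.

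Next I would pass to $\mathcal{C}_A$ via the induction functor $F:\mathcal{C}\to\mathcal{C}_A$ and view $\mathcal{C}\simeq(\mathcal{C}_A)^G$ as an equivariantization with $\mathcal{D}\simeq\mathrm{Rep}(G)$. Since $X\in\mathcal{D}$, the standard fact $F(X)=\dim(X)\mathbbm{1}_{\mathcal{C}_A}$ combined with $\rho_2\hookrightarrow X\otimes\rho_1$ gives $F(\rho_2)\subseteq\dim(X)F(\rho_1)$, and replacing $X$ by $X^\ast$ yields the reverse containment, so $F(\rho_1)$ and $F(\rho_2)$ have the same set of simple summands. Under the equivariantization correspondence between $\mathcal{O}(\mathcal{C})$ and pairs (orbit, irrep of stabilizer), $\rho_1$ and $\rho_2$ correspond to a common nontrivial $G$-orbit $\mathcal{O}$ on $\mathcal{O}(\mathcal{C}_A)$, paired with two inequivalent irreducibles of the stabilizer $H$. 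The equality $|\mathrm{Irr}(H)|=2$ forces $H\cong C_2$, the orbit has size $|G|/2$, every simple $M$ in the orbit shares the same Frobenius-Perron dimension $d=2\,\mathrm{FPdim}(\rho_1)/|G|$, and hence $\mathcal{C}_A$ has rank $1+|G|/2$ and $\mathrm{FPdim}(\mathcal{C}_A)=1+(|G|/2)d^2$.

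The hard part is the final dimensional bound. When $d\in\mathbb{Z}$, the category $\mathcal{C}_A$ is integral and an elementary case check, using the rank/dimension formulas above together with the quadratic-integer constraints on $\mathrm{FPdim}(\rho_i)$ from Lemma \ref{lem:quadr}, forces $d=1$ with $|G|\in\{2,4,6\}$, giving $\mathrm{FPdim}(\mathcal{C}_A)\in\{2,3,4\}$ and conclusion (1). When $d\notin\mathbb{Z}$, I would adapt the Galois conjugation argument from the proof of Proposition \ref{prop:fib}: for $\sigma\in\mathrm{Gal}(\overline{\mathbb{Q}}/\mathbb{Q})$ sending $d$ to its algebraic conjugate, the lower bound $\dim(\mathcal{C}_A^\sigma)>4/3$ from \cite[Theorem 4.1.1]{MR3943751} applied to $1+(|G|/2)\sigma(d)^2$ collapses $|G|/2$ down to $1$. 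This gives $|G|=2$, $\mathcal{C}_A$ of rank $2$, and since its nontrivial simple has irrational dimension, $\mathcal{C}_A\simeq\mathcal{C}(A_1,5,q)_{\mathrm{ad}}$. With $|G|=2$ the $G$-action on $\mathcal{C}_A$ fixes every simple (orbit size equals $|G|/2=1$), so the equivariantization is just the Deligne product $\mathcal{C}\simeq\mathrm{Rep}(C_2)\boxtimes\mathcal{C}(A_1,5,q)_{\mathrm{ad}}$, completing conclusion (2). The main obstacle throughout is reconciling the Galois lower bound with the orbit combinatorics and the algebraic-integer constraints, and in particular showing that the two separate regimes (integer $d$ and irrational $d$) exhaust the possibilities.
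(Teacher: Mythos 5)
Your reduction to the case $C_\mathcal{C}(\mathcal{C})\subset\mathcal{D}$ is incomplete at the edges: you dispose of the middle case of Lemma \ref{lem:threee} nicely by showing the fusion hypothesis is vacuous there, but you never address the first case, where $\mathcal{C}$ is symmetrically braided, $C_\mathcal{C}(\mathcal{C})=\mathcal{C}$, and the witness $X$ may be $\rho_1$ or $\rho_2$ itself (e.g.\ $\mathrm{Rep}(3_+^{1+2})$ over $\mathrm{Rep}(C_3^2)$, where only $X=\rho_i$ works), so neither Lemma \ref{lem:quadr} nor the identity $F(X)=\dim(X)\mathbbm{1}$ is available. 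Also, the simples of an equivariantization lying over a fixed orbit are indexed by \emph{projective} irreducibles of the stabilizer, so "two irreducibles" does not force $H\cong C_2$. But these are not the main problem.

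The main problem is that both of your endgame arguments are missing their engine. In the integral case you assert that an "elementary case check $\ldots$ forces $d=1$ with $|G|\in\{2,4,6\}$", but none of the inputs you list can produce this: Lemma \ref{lem:quadr} is vacuous once $d\in\mathbb{Z}$, and the formulas $\mathrm{rank}(\mathcal{C}_A)=1+|G|/2$ and $\mathrm{FPdim}(\mathcal{C}_A)=1+(|G|/2)d^2$ give no upper bound on $d$ or $|G|$ whatsoever --- you are restating the conclusion. In the irrational case the Galois bound is applied in the wrong direction: $\sigma(\dim(\mathcal{C}_A))=1+(|G|/2)\sigma(d)^2>4/3$ is a \emph{lower} bound on $\sigma(d)^2$ and cannot collapse $|G|/2$ to $1$; the analogous step in Proposition \ref{prop:fib} works only because the relation $d_+d_-=-N$ makes the conjugate dimension a decreasing function of the parameters, and no such relation is present here. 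What is missing in both regimes is any use of the actual hypothesis $X\in\mathcal{O}(C_\mathcal{C}(\mathcal{C}))$ rather than merely $X\in\mathcal{O}(\mathcal{D})$. The paper feeds this into the balancing equation, $S_{X,\rho_1}=\dim(X)\dim(\rho_1)$, to conclude $\theta_{\rho_1}=\theta_{\rho_2}$, so that every nontrivial simple of the modular category $\mathcal{C}_A$ carries the same twist $\theta$; the Gauss sum identity $\tau_1^+(\mathcal{C}_A)\tau_1^-(\mathcal{C}_A)=\dim(\mathcal{C}_A)$ then forces $\dim(\mathcal{C}_A)=2-(\theta+\theta^{-1})\le 4$, and this single inequality is the source of both conclusions (after discarding the quadratic-integer values of $2-(\theta+\theta^{-1})$ with a conjugate below $1$, which leaves only $(5\pm\sqrt5)/2$ and the integers $1$ through $4$). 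Your proposal never touches the twists, and without that step the dimension bound --- and hence the lemma --- does not follow.
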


\begin{proof}
The first two category types in Lemma \ref{lem:threee} satisfy $\mathrm{FPdim}(\mathcal{C}_A)\in\{1,2,3,4\}$ by Propositions \ref{prop:tan} and \ref{prop:stan}, so we assume through the remainder of the proof that $C_\mathcal{C}(\mathcal{C})\subset\mathcal{D}$ and $\mathcal{C}_A$ is a modular fusion category. 

\par Let $X\in\mathcal{O}(C_\mathcal{C}(\mathcal{C}))$ satisfying the hypothesis of our statement.  By the balancing equation \cite[Proposition 8.13.8]{tcat}, since $X$ and $\rho_1$ centralize one another,
\begin{align}
&&\dim(X)\dim(\rho_1)&=S_{X,\rho_1}=N_{X,\rho_1}^{\rho_1}\dim(\rho_1)+\dfrac{\theta_{\rho_2}}{\theta_{\rho_1}}N_{X,\rho_1}^{\rho_2}\dim(\rho_2) \\
\Rightarrow&&\dim(X)-N_{X,\rho_1}^{\rho_1}&=\dfrac{\theta_{\rho_2}}{\theta_{\rho_1}}N_{X,\rho_1}^{\rho_2}\dfrac{\dim(\rho_2)}{\dim(\rho_1)}.
\end{align}
In particular, since $N_{X,\rho_1}^{\rho_2}\neq0$, then $\theta:=\theta_{\rho_1}=\theta_{\rho_2}$ and $\dim(\rho_1)/\dim(\rho_2)\in\mathbb{Q}$.  This means that the modular fusion category $\mathcal{C}_A$ consists of the tensor unit, and all other simple objects have the same twist.  We then compute the product of the Gauss sums of $\mathcal{C}_A$ \cite[Proposition 8.15.4]{tcat},
\begin{align}
1+M&=\dim(\mathcal{C}_A)=\rho_1(\mathcal{C}_A)\rho_{-1}(\mathcal{C}_A) \\
&=(1+\theta M)(1+\theta^{-1}M) \\
&=1+M(\theta+\theta^{-1})+M^2
\end{align}
where $M$ is the sums of the squares of the dimensions of the nontrivial simple objects of $\mathcal{C}_A$. Therefore $M=1-(\theta+\theta^{-1})$, or $\dim(\mathcal{C}_A)=2(1-\cos(2\pi/n))$ for some $n\in\mathbb{Z}_{\geq1}$.  We must have $\dim(\mathcal{C}_A)$ is a rational integer or quadratic integer by Lemma \ref{lem:quadr}, and the only $n$ such that $\dim(\mathcal{C}_A)$ is a nonzero integer or quadratic integer are $n\in\{2,3,4,5,6,8,10,12\}$.  The maximal of the Galois conjugates of $\dim(\mathcal{C}_A)\not\in\mathbb{Z}$ are $(1/2)(5+\sqrt{5})$, $2+\sqrt{2}$, $(1/2)(3+\sqrt{5})$ and $2+\sqrt{3}$.  The latter three have a Galois conjugate less than $1$, so these are not possible \cite[Theorem 2.3]{ENO}.  Modular fusion categories of dimension $(1/2)(5\pm\sqrt{5})$ are classified and must be $\mathcal{C}(A_1,5,q)_\mathrm{ad}$ where $q^2$ is a primitive fifth root of unity \cite[Example 5.1.2(iv)]{MR3943751}.  There are no nontrivial braided autoequivalences of $\mathcal{C}(A_1,5,q)_\mathrm{ad}$, thus $\mathcal{C}\simeq\mathcal{C}(A_1,5,q)_\mathrm{ad}\boxtimes\mathcal{D}$ which would imply $\mathrm{rank}(\mathcal{C})=2\cdot\mathrm{rank}(D)$, thus $\mathrm{rank}(\mathcal{D})=2$.  Otherwise $\dim(\mathcal{C}_A)\in\{1,2,3,4\}$.  All such modular fusion categories are integral except Ising categories which do not satisfy our hypotheses.  Therefore $\mathcal{C}_A$ is integral and moreover $\mathcal{C}$ is integral as well.
\end{proof}

\begin{example}
The smallest example not satisfying the fusion rule hypotheses of the above results are the modular rank 3 fusion categories.  In particular, $\mathcal{C}(A_1,7,q)_\mathrm{ad}$ where $q^2$ is a primitive $7$th root of unity do not satisfy the hypotheses of any of the above three lemmas; indeed the dimensions of $\rho_1$ and $\rho_2$ are cubic integers.
\end{example}


\paragraph{The rank 6 case with transitive $\mathcal{O}(C_\mathcal{C}(\mathcal{C}))$-action} Here we use Lemma \ref{lem:4} which implies $\mathcal{C}_A$ is integral of dimension $2$, $3$, or $4$ where $1$ has been eliminated since $\mathcal{C}$ cannot be super Tannakian by Figure \ref{fig:symm}.  If $\mathcal{D}$ is pointed, then $\dim(\mathcal{C})\in\{8,12,16\}$; there exist possible integer dimensions for $\rho_1,\rho_2$ only if $\dim(\mathcal{C})=12$ and $\dim(\rho_1)=\dim(\rho_2)=2$.  The following was pointed out to the authors by \mbox{G.\ Vercleyen} and is proven computationally by brute force in \cite{vercleyen2024lowrank}.

\begin{lemma}
There are exactly $4$ fusion categories with isomorphism classes of simple objects of dimensions $1,1,1,1,2,2$ up to equivalence which admit braidings.  Each possesses $6$ inequivalent braidings.
\end{lemma}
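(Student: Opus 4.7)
The plan is to reduce to rank $6$ pointed braided fusion categories via a $C_2$-equivariantization and then invoke the enumeration of Example \ref{ex:12}.

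Any fusion category $\mathcal{C}$ with simple-object Frobenius-Perron dimensions $1,1,1,1,2,2$ has $\mathcal{C}_{\mathrm{pt}}$ of rank $4$, inducing a nontrivial $C_2$-grading $\mathcal{C}=\mathcal{C}_{\mathrm{pt}}\oplus\mathcal{C}_1$ where $\mathcal{C}_1$ collects the two dimension $2$ simples and the two graded components have equal Frobenius-Perron dimension $4$. The underlying fusion ring is then one of the rank $6$ fusion rings classified in \cite{MR3059899}, namely the character rings of $D_6$ or $\mathrm{Dic}_3$, in which the group of invertibles is $C_2\times C_2$ or $C_4$, respectively. In either case there is a canonical $C_2\subset\mathcal{C}_{\mathrm{pt}}$ which left- and right-tensor-fixes both dimension $2$ simples: the kernel of the action of the invertibles on $\mathcal{O}(\mathcal{C}_1)$ in the $D_6$ case, and the unique order $2$ subgroup in the $\mathrm{Dic}_3$ case.

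Assuming $\mathcal{C}$ admits a braiding, the next step is to confirm that this $\mathrm{Vec}_{C_2}\subset\mathcal{C}$ is Tannakian and lies in the symmetric center. For the nontrivial invertible $g\in\mathrm{Vec}_{C_2}$, the balancing equation (as in Lemma \ref{lem:one}) reduces to $S_{g,X}=\theta_g^{-1}\dim(X)$ for every simple $X$, and the constraints on twists of the remaining invertibles from Example \ref{ex:braidgroup} force $\theta_g=1$. De-equivariantizing by this $C_2$ produces a braided pointed fusion category of rank $6$, hence $\mathcal{C}_{C_2}\simeq\mathcal{C}(C_6,q_{x,\omega})$ for some quadratic form (Example \ref{ex:braidgroup}), and re-equivariantizing recovers $\mathcal{C}$ as in the setup of Example \ref{ex:12}.

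The count then follows from Example \ref{ex:12} and Figure \ref{fig:dim12}: there are two inequivalent underlying pointed fusion categories $\mathrm{Vec}_{C_6}^\omega$ admitting braidings, each carrying two inequivalent $C_2$-actions by tensor autoequivalences, producing $2\times 2=4$ underlying fusion categories admitting braidings; the twelve pre-metric group structures on $C_6$ distribute evenly, so each of these four categories supports exactly six braided equivalence classes, as tabulated in Figure \ref{fig:dim12}. The main obstacle is the centralization step above: one must confirm that no braiding on $\mathcal{C}$ can make the distinguished $\mathrm{Vec}_{C_2}$ fail to be Tannakian or fail to centralize the rest of $\mathcal{C}$, which additionally requires ruling out a modular rank $6$ categorification of these fusion rules (incompatible with the rank $6$ modular data in \cite{ng2023classification}, since global dimension $12$ is not realized by any pointed modular category of rank $6$ nor by any non-pointed modular datum with this dimension spectrum).
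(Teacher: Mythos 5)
Your overall strategy is the same as the paper's: identify the two possible fusion rules ($\mathrm{Rep}(C_2\times S_3)$ or $\mathrm{Rep}(\mathrm{Dic}_3)$), exhibit a Tannakian $\mathrm{Rep}(C_2)$ in the symmetric center generated by the invertible $g$ that fixes the two dimension-$2$ simples, de-equivariantize to a pointed braided category of dimension $6$, and read off the count from Example \ref{ex:12} and Figure \ref{fig:dim12}. The only real divergence is how that Tannakian $\mathrm{Rep}(C_2)$ is produced: the paper rules out the modular and supermodular possibilities wholesale by citing the rank-$6$ modular data and the classification of supermodular categories, whereas you try to argue directly on $g$ via the balancing equation.

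That direct argument is where the gap sits, and it is precisely the step you yourself flag as ``the main obstacle.'' The balancing equation gives $S_{g,X}=\theta_g^{-1}\dim(X)$ for the simples $X$ fixed by $g$, hence only $|S_{g,X}|=\dim(g)\dim(X)$; by M\"uger's criterion this is \emph{projective} centralization, and honest centralization requires $S_{g,X}=+\dim(g)\dim(X)$, i.e.\ it requires $\theta_g=1$ as an input rather than producing it. Your claim that ``the constraints on twists of the remaining invertibles from Example \ref{ex:braidgroup} force $\theta_g=1$'' is unsupported: that example places no such constraint, and a priori $\theta_g$ could be any fourth root of unity. The step can be closed elementarily: $X\mapsto S_{g,X}/\dim(g)$ is a character of the Grothendieck ring, and applying it to $\rho\otimes\rho^\ast=\mathbbm{1}\oplus g\oplus\rho'$ (with $\rho,\rho'$ the dimension-$2$ simples, both fixed by $g$) yields $4=1+\theta_g^{-2}+2\theta_g^{-1}$, whose only root-of-unity solution is $\theta_g=1$; then $S_{g,X}=\dim(g)\dim(X)$ for the dimension-$2$ simples, these $\otimes$-generate $\mathcal{C}$, so the fusion subcategory $C_\mathcal{C}(\langle g\rangle)$ is all of $\mathcal{C}$ and $\langle g\rangle\simeq\mathrm{Rep}(C_2)$ is Tannakian and transparent. (Alternatively one can simply follow the paper and quote the nonexistence of compatible modular and supermodular data.) With that step supplied, the de-equivariantization is pointed of dimension $6$ as you say, and your count of $4$ underlying fusion categories with $6$ braidings each agrees with Figure \ref{fig:dim12}.
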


\begin{proof}
It is elementary to verify the only possible fusion rules with these dimensions are those of $\mathrm{Rep}(C_2\times S_3)$ or $\mathrm{Rep}(\mathrm{Dic}_3)$.  No such category is modular as there is no compatible modular data \cite[Appendix E.5]{ng2023classification} and is not supermodular either by \cite{MR4109138} (or see Section \ref{sec6mod}). hence the symmetric center contains a Tannakian subcategory braided equivalent to $\mathrm{Rep}(C_2)$ with regular algebra $A$ which fixes the simple objects of dimension $2$.  Hence the category $\mathcal{C}_A$ is a pointed braided fusion category with $\dim(\mathcal{C}_A)=6$, i.e.\ such a braided fusion category factors into a factor of dimension $2$ and a factor of dimension $3$.  The result then follows by the discussion in Example \ref{ex:12}.
\end{proof}

As shown in Example \ref{ex:12}, only four of these $24$ categories with inequivalent braidings have a symmetric center of rank $4$.  As fusion categories, two are equivalent to $\mathrm{Rep}(C_2\times S_3)$ and two are equivalent to $\mathrm{Rep}(\mathrm{Dic}_3)$, twisted by primitive third roots of unity $\omega$.

\par Recall that as a corollary to \cite[Proposition 8.14.6]{tcat}, we must have $\mathrm{FPdim}(X)$ divides $\mathrm{FPdim}(\mathcal{C})$ for all simple objects $X$ is a braided fusion category $\mathcal{C}$ due to the canonical embedding $\mathcal{C}\hookrightarrow\mathcal{Z}(\mathcal{C})$.  So lastly if $\mathcal{D}\simeq\mathrm{Rep}(D_5)$ is a braided equivalence, then $\dim(\mathcal{C})\in\{20,30,40\}$; there are no possible integer dimensions of $\rho_1,\rho_2$ in this case which divide $\dim(\mathcal{C})$.  And if $\mathcal{D}\simeq\mathrm{Rep}(A_4)$ is a braided equivalence, then $\dim(\mathcal{C})\in\{24,36,48\}$; there are no possible integer dimensions of $\rho_1,\rho_2$ in this case either.


\paragraph{The rank 6 case with trivial $\mathcal{O}(C_\mathcal{C}(\mathcal{C}))$-action}

Assume $\mathcal{C}$ is a rank 6 premodular fusion category with $\mathcal{D}\subset\mathcal{C}$ a Tannakian fusion subcategory of maximal rank $4$ such that $\mathcal{O}(C_\mathcal{C}(\mathcal{C}))$ acts trivially on $\rho_1,\rho_2$.  We will consider the three cases of Lemma \ref{lem:threee} separately.  If $\mathcal{C}$ is symmetrically braided, then $\mathcal{C}$ is super Tannakian and would be described in Figure \ref{fig:symm}; no such categories exist.  If $C_\mathcal{C}(\mathcal{C})$ is a maximal fusion subcategory, then $\mathrm{FPdim}(\mathcal{C})\in\mathbb{Z}$ by Proposition \ref{prop:fib} since $\mathcal{C}$ is not a rank $2$ fusion category and $C_\mathcal{C}(\mathcal{C})$ is super Tannakian by assumption.  Then Proposition \ref{prop:stan} implies $\mathcal{C}=\mathcal{C}(C_\mathcal{C}(\mathcal{C}),0)$ is a near-integral braided fusion category and the only possibilities for $C_\mathcal{C}(\mathcal{C})$ by Figure \ref{fig:symm} are $\mathrm{Rep}(D_4,\nu)$ and $\mathrm{Rep}(Q_8,\nu)$ for nontrivial $\nu$.  Moreover $\mathrm{FPdim}(\mathcal{C})=16$ and the isomorphism classes of simple objects have dimensions $1,1,1,1,2,2\sqrt{2}$.  Let $B\subset\mathcal{D}$ be the regular algebra of any $\mathrm{Rep}(C_2)$ fusion subcategory.  Then the de-equivariantization $\mathcal{C}_B$ has $\mathrm{FPdim}(\mathcal{C}_B)=8$ and $\mathcal{C}_B$ is supermodular of rank $6$ as $\mathrm{Rep}(C_2)$ is a Tannakian fusion subcategory of maximal dimension.  From the classification of such categories \cite{MR4109138}, $\mathcal{C}_B\simeq\mathrm{sVec}\boxtimes\mathcal{I}_q$ is a braided equivalence for a primitive $16$th root of unity $q$.  But the simple objects of $\mathcal{C}_B$ of dimension $\sqrt{2}$ have distinct twists hence there is no braided autoequivalence mapping one to other and moreover this case produces no examples.  It remains to consider the case $C_\mathcal{C}(\mathcal{C})\subset\mathcal{D}$ by Lemma \ref{lem:threee} and thus $\mathcal{C}_A$ is a modular fusion category as $\mathcal{D}$ must be of maximal dimension as well as rank in this case.  The only options are $\mathcal{D}$ is braided equivalent to $\mathrm{Rep}(G)$ where $G\cong C_4,C_2^2,D_5,A_4$ with regular algebra $A$ by Figure \ref{fig:symm}.

\par Assume first that $\mathcal{D}$ is pointed.  If $\mathcal{O}(\mathcal{D})$ acts trivially on $\rho_1,\rho_2$, then $\mathcal{C}_A$ is rank $9$ and has two sets of four simple objects which are all the same dimension and twist.  An exhaustive search of possible modular data \cite[Appendix E.5]{ng2023classification} shows that such a modular fusion category is pointed.  By the classification of metric groups of order $9$, $\mathcal{C}_A\simeq\mathcal{C}(C_3,q)^{\boxtimes2}$ where $q$ is either symmetric nondegenerate bilinear form on $C_3$.  Hence $\dim(\rho_1)=\dim(\rho_2)=4$, $\theta_{\rho_1}=\theta_{\rho_2}^{-1}=\omega$ for a primitive 3rd root of unity $\omega$, and $\dim(\mathcal{C})=36$.  But note that there is no $C_2^2$-action on $C_3^2$ which is fixed-point free, so $\mathcal{O}(\mathcal{D})$ must be cyclic.  The unique action of $C_4$ on $C_3^2$ by braided autoequivalences produces the fusion category $\mathrm{Rep}(C_3^2\rtimes C_4)$ which has two inequivalent braidings by \cite[Theorem 3.2]{MR3943750} (see Example \ref{braidfin}): the symmetric one, and the relevant nonsymmetric braiding twisted by either primitive third root of unity $\omega$.  If $\mathcal{O}(\mathcal{D})$ acts transitively on $\rho_1,\rho_2$, then $\mathcal{C}_A$ is rank $5$ with two pairs of two simple objects of the same dimension which by Figure \ref{fig:catrank5b} is pointed, thus $\dim(\mathcal{C})=20$ and $\mathcal{C}$ is integral.  But $\dim(\rho_1)=\sqrt{(\dim(\mathcal{C})-4)/2}\not\in\mathbb{Z}$, so this case produces no examples.

\par Now assume $\mathcal{D}\simeq\mathrm{Rep}(A_4)$, hence $C_\mathcal{C}(\mathcal{C})\simeq\mathrm{Rep}(G)$ where $G\cong C_1,C_3,A_4$.  By inspection of rank $6$ modular data \cite[Appendix E.5]{ng2023classification}, $G$ is not trivial.  If $G\cong C_3,A_4$, let $B$ be the regular algebra of $\mathrm{Rep}(C_3)\subset C_\mathcal{C}(\mathcal{C})$.  The de-equivariantization $\mathcal{C}_B$ is a rank $10$ braided fusion category with three nontrivial invertible objects of order 2 coming from the splitting of the simple object of $\mathrm{Rep}(A_4)$ of dimension $3$, and six simple objects appearing in sets of three: $\rho_i^j$ for $i\in\{1,2\}$ and $j\in\{1,2,3\}$ coming from the splitting of the simple objects $\rho_i$.  If all $\rho_i^j$ are the same dimension $d$, then $d\in\mathbb{Z}$ by \cite[Lemma 2.6]{MR4655273}.  Moreover $d$ is a root of $x^2-ax-b$ where $b$ divides $4$, hence $d\in\{1,2,4\}$.  As a consequence, the dimensions of the simple objects of $\mathcal{C}$ are $1,1,1,2,3,3$, $1,1,1,2,6,6$ or $1,1,1,2,12,12$.  None of these global dimensions is divisible by $\dim(\mathrm{Rep}(C_3))=3$, so we conclude $\dim(\rho_1^j)\neq\dim(\rho_2^k)$ for any $j,k\in\{1,2,3\}$.  There is no such rank 10 modular data \cite[Appendix E.9]{ng2023classification} so this case produces no examples.

\par Lastly assume $\mathcal{D}\simeq\mathrm{Rep}(D_5)$, hence $C_\mathcal{C}(\mathcal{C})\simeq\mathrm{Rep}(G)$ where $G\cong C_1,C_2,D_5$.  By inspection of possible rank $6$ modular data \cite[Appendix E.5]{ng2023classification}, $G$ is not trivial.  Let $B\subset\mathrm{Rep}(C_2)\subset C_\mathcal{C}(\mathcal{C})$ be the regular algebra so that $\mathcal{C}_A$ is a braided fusion category of rank $9$ with $4$ nontrivial invertible objects of order $5$ and four simple objects appearing in sets of two: $\rho_i^j$ for $i\in\{1,2\}$ and $j\in\{1,2\}$ coming from the splitting of the simple objects $\rho_i$.  First note that if $\mathcal{C}$ were not integral, then there exist Galois conjugates of $\mathbbm{1}$ with dimension distinct from $1$ by \cite[Theorem 4.1.6]{plavnik2021modular}, hence the number of such simple objects is a multiple of $5$ which cannot occur under these circumstances.  Thus $\mathcal{C}_B$ is integral.  Let $d_i:=\dim(\rho_i^j)$ so that $\dim(\mathcal{C}_B)=5+2d_1+2d_2$.  In particular, $\dim(\mathcal{C}_B)$ is odd and therefore $\mathcal{C}_B$ is pointed \cite{CZENKY2023}.  Moreover $\dim(\mathcal{C})=18$ with a fusion subcategory of dimension $\dim(\mathrm{Rep}(D_5))=10$ which cannot occur.


\subsubsection{Tannakian subcategory has maximal rank $3$}\label{sec:repc3}

\begin{figure}[H]
\centering
\begin{align*}
\begin{array}{|c|c|c|c|c|}
\hline \mathcal{C} & \mathrm{FPdim}(\mathcal{C}) & \mathrm{FPdims} & C_\mathcal{C}(\mathcal{C}) & \# \\\hline\hline
\mathrm{sVec}\boxtimes\mathrm{Rep}(C_3) & 6 & 1,1,1,1,1,1 & \mathrm{Rep}(C_6,\nu) & 1 \\
\mathcal{C}(C_2,q)\boxtimes\mathrm{Rep}(C_3) & 6 & 1,1,1,1,1,1 & \mathrm{Rep}(C_3) & 2\\
\mathcal{C}(A_1,5,q)_\mathrm{ad}\boxtimes\mathrm{Rep}(C_3) & \frac{15}{4}\csc^2(\frac{\pi}{5}) & 1,1,1,[3]_5,[3]_5,[3]_5 & \mathrm{Rep}(C_3) & 4\\
\mathcal{C}(C_2,q)\boxtimes\mathrm{Rep}(S_3) & 12 & 1,1,1,1,2,2 & \mathrm{Rep}(S_3) & 2\\
\mathrm{sVec}\boxtimes\mathrm{Rep}(S_3) & 12 & 1,1,1,1,2,2 & \mathrm{Rep}(S_3) & 1\\
\mathrm{Rep}(C_3\rtimes S_3)^\omega & 18 & 1,1,2,2,2,2& \mathrm{Rep}(C_2) & 3 \\
\mathrm{Rep}(D_9)^\beta & 18 & 1,1,2,2,2,2& \mathrm{Rep}(C_2) & 4 \\
\mathcal{C}(A_1,5,q)_\mathrm{ad}\boxtimes\mathrm{Rep}(S_3) & \frac{15}{2}\csc^2(\frac{\pi}{5}) & 1,1,[3]_5,[3]_5,2,2[3]_5 & \mathrm{Rep}(S_3) & 4\\
\hline
\end{array}
\end{align*}
    \caption{The $21$ braided equivalence classes of premodular fusion categories of rank $6$ with Tannakian subcategory $\mathcal{D}$ of maximal rank $3$}%
    \label{fig:rank34567}%
\end{figure}
\par There are only two rank 3 Tannakian categories; each case is described separately below.

\paragraph{The case of $\mathrm{Rep}(C_3)$} Assume first that $\mathrm{Rep}(C_3)\simeq\mathcal{D}\subset\mathcal{C}$ is a Tannakian subcategory of maximal rank and let $\rho_1,\rho_2,\rho_3\in\mathcal{O}(\mathcal{C})$ be the isomorphism classes of simple objects not lying in $\mathcal{O}(\mathcal{D})$.  There are two cases to consider based on the $\otimes$-action of $\mathcal{O}(\mathcal{D})$ on $\{\rho_1,\rho_2,\rho_3\}$.

\par If $\mathcal{O}(\mathcal{D})$ acts transitively on the $\{\rho_1,\rho_2,\rho_3\}$, then $\mathcal{C}$ is a braided generalized near-group fusion category.  In particular, $\mathrm{FPdim}(\rho_1)=\mathrm{FPdim}(\rho_2)=\mathrm{FPdim}(\rho_3)$ and since $\mathcal{O}(\mathcal{D})$ acts on the $\{\rho_1,\rho_2,\rho_3\}$ without fixed-points, measuring the Frobenius-Perron dimension of $\rho_1\otimes\rho_1^\ast$ implies that $\mathrm{FPdim}(\rho_1)$ is a root of $x^2-ax-1$ for some $a\in\mathbb{Z}_{\geq0}$.  By \cite{MR4658217}, if $\mathrm{FPdim}(\rho_1)\not\in\mathbb{Z}$, then $\mathcal{C}\simeq\mathcal{D}\boxtimes\mathcal{C}(A_1,5,q)_\mathrm{ad}$ with $q^2$ a primitive fifth root of unity.  Otherwise, $\mathrm{FPdim}(\rho_1)\in\mathbb{Z}$ implies $\mathrm{FPdim}(\rho_1)=1$ by factoring the above quadratic over $\mathbb{Z}$ and thus $\mathcal{C}$ is pointed.  Moreover $\mathcal{C}\simeq\mathcal{D}\boxtimes\mathcal{E}$ where $\mathcal{E}$ is a rank 2 non-Tannakian pointed braided fusion category, which is either modular or braided equivalent to $\mathrm{sVec}$.

\par Otherwise, $\rho_1,\rho_2,\rho_3$ are fixed by $\mathcal{O}(\mathcal{D})$ under the $\otimes$-action, thus $\mathcal{D}\subset C_\mathcal{C}(\mathcal{C})$ by \cite[Proposition 8.20.5]{tcat} and $\mathcal{C}$ is not pointed.  Since there are no super Tannakian categories of ranks 4--6 which are not pointed containing exactly three invertible simple objects by Figure \ref{fig:symm}, then $C_\mathcal{C}(\mathcal{C})=\mathcal{D}$ is Tannakian.  Moreover with $A\in\mathcal{D}$ the regular algebra of $\mathrm{Rep}(C_3)$, $\mathcal{C}_A$ is a modular fusion category of rank $10$ with simple object $\mathbbm{1}$, and the dimensions of the remaining simple objects occur in sets of $3$.  By inspection \cite[Appendix E.9]{ng2023classification} the only such categories satisfying the above conditions are pointed.  There is no fixed-point free automorphism of the cyclic group of order $10$, so any braided autoequivalence of order dividing $3$ acting on a pointed modular fusion category of rank $10$ must have at least $4$ orbits in addition to the tensor unit, which would produce at least $7$ simple objects in the equivariantization \cite[Remark 4.15.8]{tcat}.  Hence no such premodular category exists.


\paragraph{The case of $\mathrm{Rep}(S_3)$}  Assume now that $\mathrm{Rep}(S_3)\simeq\mathcal{D}\subset\mathcal{C}$ is a Tannakian subcategory of maximal rank and let $\rho_1,\rho_2,\rho_3\in\mathcal{O}(\mathcal{C})$ be the isomorphism classes of simple objects not lying in $\mathcal{O}(\mathcal{D})$.  We will show that $\mathcal{D}_\mathrm{pt}\simeq\mathrm{Rep}(C_2)\subset C_\mathcal{C}(\mathcal{C})$.  Consider the relative centralizer subcategory $C_\mathcal{C}(\mathcal{D}_\mathrm{pt})$, which contains $\mathcal{D}$ by definition.  Since the action of $\mathcal{O}(\mathcal{D}_\mathrm{pt})$ on $\{\rho_1,\rho_2,\rho_3\}$ is either trivial or has one fixed-point, then $\mathrm{rank}(C_\mathcal{C}(\mathcal{D}_\mathrm{pt}))$ is at least 4 by the balancing equation.  Premodular fusion categories of rank 4 were described above in Figure \ref{fig:catrank4}, and none contain a fusion subcategory equivalent to $\mathrm{Rep}(S_3)$.  If $\mathrm{rank}(C_\mathcal{C}(\mathcal{D}_\mathrm{pt})))=5$, then $\mathcal{C}$ is a braided near-integral fusion category with maximal fusion subcategory $\mathcal{E}:=C_\mathcal{C}(\mathcal{D}_\mathrm{pt})$.  We have shown if $C_\mathcal{E}(\mathcal{E})$ is Tannakian, then $\mathcal{E}$ is the symmetric center of $\mathcal{C}$.   There are only 8 options for the fusion rules of $\mathcal{E}$ in Figure \ref{fig:symm}.  There must exist a Tannakian subcategory of $\mathcal{E}$ of global dimension $\dim(\mathcal{E})$ or $(1/2)\dim(\mathcal{E})$, and none of the possible options from Figure \ref{fig:symm} is compatible with $\mathcal{D}$ being the maximal Tannakian subcategory of $\mathcal{C}$.  Our conclusion is that $\mathcal{D}_\mathrm{pt}\subset C_\mathcal{C}(\mathcal{C})$.  This implies with $A$ the regular algebra of $\mathcal{D}_\mathrm{pt}$, the de-equivariantization $\mathcal{C}_A$ is a braided fusion category.  We may now proceed to analyze the cases when $\mathcal{D}_\mathrm{pt}$ acts on $\{\rho_1,\rho_2,\rho_3\}$ trivially, or with a unique fixed-point. 

\par In the case that $\mathcal{D}_\mathrm{pt}$ acts on $\{\rho_1,\rho_2,\rho_3\}$ with a unique fixed-point, $\mathrm{rank}(\mathcal{C}_A)=6$ and $\mathcal{D}_A\simeq\mathrm{Rep}(C_3)\subset\mathcal{C}_A$ is a Tannakian subcategory.  Note that $\mathcal{C}_A$ is not Tannakian or else $\mathcal{C}$ itself is, and the maximal Tannakian subcategory of $\mathcal{C}_A$ cannot have rank 5 as we have shown above there are no such braided fusion categories in Section \ref{subsec5or6}.  In the case the maximal Tannakian subcategory of $\mathcal{C}_A$ is rank 3, then it is $\mathcal{D}_A$ and the above work shows that $\mathcal{C}_A\simeq\mathcal{E}\boxtimes\mathrm{Rep}(C_3)$ is a braided equivalence where $\mathcal{E}$ is braided equivalent to $\mathrm{sVec}$, $\mathcal{C}(C_2,q)$ with $q$ a nondegenerate quadratic form on $C_2$, or $\mathcal{C}(A_1,5,q)_\mathrm{ad}$ with $q^2$ a primitive fifth root of unity.  The $C_2$-action by braided autoequivalences on $\mathcal{E}$ is trivial in the latter case, hence $\mathcal{C}\simeq\mathcal{C}(A_1,5,q)_\mathrm{ad}\boxtimes\mathrm{Rep}(S_3)$ is a braided equivalence.  Otherwise $\dim(\mathcal{C})=12$ and these cases are described in Example \ref{ex:12}.  It remains to consider if the maximal Tannakian subcategory of $\mathcal{C}_A$ has rank 4.  As $\mathcal{D}_A\subset\mathcal{C}_A$, then this Tannakian subcategory is equivalent to $\mathrm{Rep}(A_4)$ by Figure \ref{fig:symm}.  Moreover the simple objects of $\mathcal{C}$ have dimensions $1,1,2,3,3,d$.  If the set of simple objects without the one of dimension $d$ form a fusion subcategory $\mathcal{G}\subset\mathcal{C}$, then $C_\mathcal{G}(\mathcal{G})=\mathcal{G}$ is Tannakian, contradicting the fact that $\mathrm{Rep}(S_3)$ is the maximal Tannakian subcategory.  Therefore $d<10$, $d\in\mathbb{Z}$, and $2$ divides $d$.  This leaves $d\in\{2,4,6,8\}$.  But only $d=6$ allows $6=\dim(\mathcal{D})$ to divide $\dim(\mathcal{C})$ \cite[Proposition 8.15]{ENO}.  Lastly, we point out that if the symmetric center of $\mathcal{C}$ is $\mathcal{D}_\mathrm{pt}$, then $\mathcal{C}_A$ is a modular fusion category with simple objects of dimension $1,1,1,3,3,3$, which does not exist since the adjoint subcategory would have global dimension $10$ and thus rank $2$, which is absurd.  Since $\mathcal{G}$ cannot be the symmetric center of $\mathcal{C}$ as explained above, this implies $\mathcal{D}=C_\mathcal{C}(\mathcal{C})$.  In this case $\mathcal{C}_A$ is a braided fusion category with dimensions $1,1,1,3,3,3$ and maximal Tannakian subcategory $\mathrm{Rep}(C_3)$ which we have shown does not exist in in the preceding subsection.

\par Otherwise $\mathcal{D}_\mathrm{pt}\simeq\mathrm{Rep}(C_2)$ acts trivially on $\{\rho_1,\rho_2,\rho_3\}$, but $C_\mathcal{C}(\mathcal{C})$ may be equal to $\mathcal{D}_\mathrm{pt}$ or strictly larger.  If $\mathcal{D}_\mathrm{pt}=C_\mathcal{C}(\mathcal{C})$ and $A$ is the regular algebra of $\mathcal{D}_\mathrm{pt}$, then $\mathcal{C}_A$ is a modular fusion category of rank 9 with at least $3$ invertible objects, and the other $6$ simple objects have dimensions which occur in pairs.  By inspection \cite[Appendix E.8]{ng2023classification}, the only such modular fusion categories are pointed.  Thus the dimensions of $\mathcal{C}$ are $1,1,2,2,2,2$.  These categories are described in Example \ref{ex:12222}.

\par Now it remains to consider when $C_\mathcal{C}(\mathcal{C})$ is strictly larger than $\mathcal{D}_\mathrm{pt}$.  One possibility is that $\mathcal{D}_\mathrm{pt}\subset C_\mathcal{C}(\mathcal{D})$ but $\mathcal{D}\not\subset C_\mathcal{C}(\mathcal{D})$ in which case $C_\mathcal{C}(\mathcal{D})$ has rank $4$ or $5$ and contains $\mathcal{D}$ as a fusion subcategory.  Following the reasoning of the first paragraph, there are no premodular categories of rank $4$ containing a fusion subcategory equivalent to $\mathrm{Rep}(S_3)$.  Thus $C_\mathcal{C}(\mathcal{D})$ is a maximal fusion subcategory.  Since $C_\mathcal{C}(\mathcal{D})$ being Tannakian would contradict the assumption $\mathcal{D}$ is maximal Tannakian, then $C_\mathcal{C}(\mathcal{D})$ is super Tannakian containing a Tannakian subcategory of dimension $6$.  Moreover $\dim(C_\mathcal{C}(\mathcal{D}))=12$, but there are no such symmetrically braided rank 5 categories.  Therefore $\mathcal{D}\subset C_\mathcal{C}(\mathcal{C})$.  And if $\mathcal{D}\subsetneq C_\mathcal{C}(\mathcal{C})$, $C_\mathcal{C}(\mathcal{C})$ is super Tannakian of dimension $12$, which we know does not exist.  Hence $\mathcal{D}=C_\mathcal{C}(\mathcal{C})\simeq\mathrm{Rep}(S_3)$ is a braided equivalence.

\par Consider the $C_2$-de-equivariantization by $\mathcal{D}_\mathrm{pt}$, i.e.\ $\mathcal{C}_A$ with $A$ the regular algebra of $\mathcal{D}_\mathrm{pt}$.  This gives a premodular category of rank $9$ with symmetric center $C_{\mathcal{C}_A}(\mathcal{C}_A)\simeq\mathrm{Rep}(C_3)$ and six simple objects split from the trivial action of $\mathcal{D}_\mathrm{pt}$ on $\{\rho_1,\rho_2,\rho_3\}$; call these corresponding simple objects $\rho_j^1,\rho_j^2$ for $j=1,2,3$.   Let $B\subset\mathrm{Rep}(C_3)$ be the regular algebra.  There are three possibilities for the action of $B$ on $\{\rho_j^1,\rho_j^2:j=1,2,3\}$.  Note that $(\mathcal{C}_A)_B$ is a modular fusion category.

\begin{figure}[H]
\centering
\begin{align*}
\begin{tikzpicture}[scale=0.75]
\node (a) at (0,0) {$\bullet$};
\node (b) at (1,0) {$\bullet$};
\node (c) at (2,0) {$\bullet$};
\node (d) at (0,1) {$\bullet$};
\node (e) at (1,1) {$\bullet$};
\node (f) at (2,1) {$\bullet$};
\node (g) at (0,2) {$\bullet$};
\node (h) at (1,2) {$\bullet$};
\node (i) at (2,2) {$\bullet$};
\draw[<->] (a) -- (b);
\draw[<->] (b) -- (c);
\draw[<->,out=45,in=135,looseness=1] (c) edge (a);
\draw[<->] (d) -- (e);
\draw[<->] (e) -- (f);
\draw[<->,out=45,in=135,looseness=1] (f) edge (d);
\draw[<->] (g) -- (h);
\draw[<->] (h) -- (i);
\draw[<->,out=45,in=135,looseness=1] (i) edge (g);
\node (a1) at (4,0) {$\bullet$};
\node (b1) at (5,0) {$\bullet$};
\node (c1) at (6,0) {$\bullet$};
\node (d1) at (4,1) {$\bullet$};
\node (e1) at (5,1) {$\bullet$};
\node (f1) at (6,1) {$\bullet$};
\node (g1) at (4,2) {$\bullet$};
\node (h1) at (5,2) {$\bullet$};
\node (i1) at (6,2) {$\bullet$};
\path[->,out=45,in=135,looseness=5] (g1) edge (g1);
\path[->,out=45,in=135,looseness=5] (h1) edge (h1);
\path[->,out=45,in=135,looseness=5] (i1) edge (i1);
\draw[<->] (a1) -- (b1);
\draw[<->] (b1) -- (c1);
\draw[<->,out=45,in=135,looseness=1] (c1) edge (a1);
\draw[<->] (d1) -- (e1);
\draw[<->] (e1) -- (f1);
\draw[<->,out=45,in=135,looseness=1] (f1) edge (d1);
\node (a2) at (8,0) {$\bullet$};
\node (b2) at (9,0) {$\bullet$};
\node (c2) at (10,0) {$\bullet$};
\node (d2) at (8,1) {$\bullet$};
\node (e2) at (9,1) {$\bullet$};
\node (f2) at (10,1) {$\bullet$};
\node (g2) at (8,2) {$\bullet$};
\node (h2) at (9,2) {$\bullet$};
\node (i2) at (10,2) {$\bullet$};
\path[->,out=45,in=135,looseness=5] (g2) edge (g2);
\path[->,out=45,in=135,looseness=5] (h2) edge (h2);
\path[->,out=45,in=135,looseness=5] (i2) edge (i2);
\draw[<->] (a2) -- (b2);
\draw[<->] (b2) -- (c2);
\draw[<->,out=45,in=135,looseness=1] (c2) edge (a2);
\path[->,out=45,in=135,looseness=5] (d2) edge (d2);
\path[->,out=45,in=135,looseness=5] (e2) edge (e2);
\path[->,out=45,in=135,looseness=5] (f2) edge (f2);
\end{tikzpicture}
\end{align*}
    \caption{An illustration of the three cases of the $C_3$-action on $\mathcal{O}(\mathcal{C}_A)$}%
    \label{fig:rank3action}%
\end{figure}
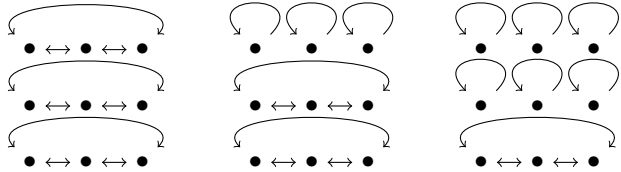

\par\noindent\textbf{$C_3$-action has no fixed-points}  If $B$ acts freely on $\{\rho_j^1,\rho_j^2:j=1,2,3\}$, then by the pigeon-hole principle, the dimensions of all $\rho_j^k$ for $j=1,2,3$ and $k=1,2$ are equal.  There are exactly two nontrivial simple objects of $(\mathcal{C}_A)_B$ which are the free $B$-modules $B\otimes\rho_j^k$ over all $j,k$.  There are only three fusion rules for rank $3$ modular fusion categories and only the pointed modular fusion categories of rank 3 have two nontrivial simple objects of the same dimension.  Hence $(\mathcal{C}_A)_B$ is pointed, $\mathcal{C}_A$ is pointed of rank $9$, thus this case is described in Example \ref{ex:12222}.

\par\noindent\textbf{$C_3$-action has three fixed-points} If $B$ acts with one orbit of size 3 and 3 fixed-points on $\{\rho_j^1,\rho_j^2:j=1,2,3\}$, then $(\mathcal{C}_A)_B$ is rank 11 with the trivial object of dimension $1$, four simple objects of dimension $d_1$ and six simple objects of dimension $d_2$ for some $d_1,d_2\in\mathbb{R}$ (possibly equal).  By inspection \cite[Appendix E(10)]{ng2023classification} there are no such modular data other than those corresponding to the pointed modular fusion categories.  This implies $\mathcal{C}_A$ has six invertible objects and three simple objects of dimension $3$.  But $\dim((\mathcal{C}_A)_\mathrm{pt})=6\nmid33=\dim(\mathcal{C}_A)$ so no such category exists. 

\par\noindent\textbf{$C_3$-action has six fixed-points} The remaining case is that $B$ acts trivially on $\{\rho_j^1,\rho_j^2:j=1,2,3\}$.  Equivalently, the simple objects of $\mathcal{D}$ must act trivially on $\rho_1,\rho_2,\rho_3$.  The modular fusion category $(\mathcal{C}_A)_B$ is rank $19$, but has (at most) three dimensions of simple objects distinct from $1$ and the full twists on the six simple objects of coinciding dimension are the same as $\rho_1,\rho_2,\rho_3$.  Note that if $(\mathcal{C}_A)_B$ is not integral, then at least $7$ simples objects $X\in(\mathcal{C}_A)_B$ have $\dim(X)=1$.  But their nontrivial Galois conjugates would be at least $7$ in number, thus there are only two distinct dimensions of simple objects by the pigeon-hole principle, each occurring with the same multiplicity.  As this cannot occur for a fusion category of rank $19$ there are no non-integral examples.  Thus we conclude $(\mathcal{C}_A)_B$ is integral, and completely anisotropic, since $\mathcal{D}$ was the maximal Tannakian fusion subcategory of $\mathcal{C}$.  Therefore, if $(\mathcal{C}_A)_B$ has any nontrivial invertible objects and is not pointed, the pointed subcategory is modular and $(\mathcal{C}_A)_B$ would factor since the pointed subcategory can only have ranks $7$ or $13$.  But $\mathrm{rank}((\mathcal{C}_A)_B)=19$ is prime so either $(\mathcal{C}_A)_B$ is pointed, or it has no nontrivial invertible objects.  In the pointed case, there is a unique pointed fusion category of rank $19$ which admits braidings, $\mathrm{Vec}_{C_{19}}$ with trivial associator.  The fixed-point free action of $C_3$ by tensor autoequivalences is unique, hence $\mathcal{C}_A$ is equivalent to $\mathrm{Rep}(C_{19}\rtimes C_3)$ as a fusion category.  For any non-symmetric braiding on $\mathrm{Rep}(C_{19}\rtimes C_3)$, the twists on the simple objects of dimension $3$ are distinct primitive $19$th roots of unity, hence, such a braided category has no fixed-point free tensor autoequivalences of order $2$.

\par Lastly, consider the case $(\mathcal{C}_A)_B$ has no nontrivial invertible objects.  Let $d_1,d_2,d_3$ be the dimensions of the three classes of six nontrivial simple objects with full twists $\theta_1,\theta_2,\theta_3$.  Note that if any of the normalized twists corresponding to $\theta_1,\theta_2,\theta_3$ are related by square Galois conjugacy, then there are at most $3$ distinct Frobenius-Perron dimensions among $1,d_1,d_2,d_3$.  This would imply the existence of nontrivial invertible objects, against our assumptions, by \cite[Lemma 4.2]{schopieray2023fixedpointfree}.  Therefore, $\theta_1,\theta_2,\theta_3$ are at most $24$th roots of unity, i.e.\ $\dim(\mathcal{C})=2^a\cdot3^b$ for some $a,b\in\mathbb{Z}_{\geq0}$ and therefore $\mathcal{C}$ is solvable \cite[Theorem 9.15.9]{tcat} in the sense of \cite[Definition 1.2]{solvable} and moreover weakly group-theoretical in the sense of \cite[Definition 1.1]{solvable}.  But this would imply $(\mathcal{C}_A)_B$ is pointed \cite[Theorem 1.1]{MR3770935} as $\mathcal{D}$ was maximal Tannakian, again against our assumptions.


\subsubsection{Tannakian subcategory has maximal rank $2$}

\begin{figure}[H]
\centering
\begin{align*}
\begin{array}{|c|c|c|c|c|}
\hline \mathcal{C} & \mathrm{FPdim}(\mathcal{C}) & \mathrm{FPdims} & C_\mathcal{C}(\mathcal{C}) & \# \\\hline\hline
\mathrm{Rep}(C_2)\boxtimes\mathcal{C}(C_3,q) & 6 & 1,1,1,1,1,1 & \mathrm{Rep}(C_2) & 2\\
\mathrm{Rep}(C_2,\nu)\boxtimes\mathcal{I}_q & 8 & 1,1,1,1,\sqrt{2},\sqrt{2} & \mathrm{Rep}(C_2,\nu) & 16 \\
\mathcal{I}_2 & 8 & 1,1,1,1,\sqrt{2},\sqrt{2} & \mathrm{Rep}(C_2) & 8\\
\mathcal{C}(C_2,q)\boxtimes\mathrm{Rep}(S_3)^\omega & 12 & 1,1,1,1,2,2 & \mathrm{Rep}(C_2) & 4\\
\mathrm{sVec}\boxtimes\mathrm{Rep}(S_3)^\omega & 12 & 1,1,1,1,2,2 & \mathrm{Rep}(C_2^2,\nu) & 2\\
\mathrm{Rep}(\mathrm{Dic}_3,\nu)^\omega & 12 & 1,1,1,1,2,2 & \mathrm{Rep}(C_2) & 3 \\
\mathcal{C}(C_6,q_{x,\omega})^{C_2} & 12 & 1,1,1,1,2,2 & \mathrm{Rep}(C_2) & 6\\
\mathrm{Rep}(C_3\rtimes S_3)^\omega & 18 & 1,1,2,2,2,2 & \mathrm{Rep}(C_2) & 1\\
\mathcal{C}(A_1,5,q)_\mathrm{ad}\boxtimes\mathrm{Rep}(S_3)^\omega & \frac{5}{2}\csc^2(\frac{\pi}{5}) & 1,1,[3]_5,[3]_5,2,2[3]_5 & \mathrm{Rep}(C_2) & 8 \\
\mathrm{Rep}(C_2)\boxtimes\mathcal{C}(A_1,7,q)_\mathrm{ad} & \frac{7}{2}\csc^2(\frac{\pi}{2}) & 1,1,[3]_7,[3]_7,[5]_7,[5]_7 & \mathrm{Rep}(C_2) & 3\\
\hline
\end{array}
\end{align*}
    \caption{The $53$ braided equivalence classes of premodular fusion categories of rank $6$ with Tannakian subcategory $\mathcal{D}$ of maximal rank $2$}%
    \label{fig:rank23456}%
\end{figure}

\par Assume now that $\mathcal{D}\simeq\mathrm{Rep}(C_2)\subset\mathcal{C}$ is a Tannakian subcategory of maximal rank.  If $\mathcal{D}\not\subset C_\mathcal{C}(\mathcal{C})$, then $\mathcal{D}\cap C_\mathcal{C}(\mathcal{C})=\mathrm{Vec}$ and thus $\mathcal{C}$ is supermodular since $C_\mathcal{C}(\mathcal{C})$ must contain a Tannakian subcategory of dimension at least one-half its own dimension \cite[Corollary 9.9.32]{tcat}.  These categories have been classified \cite[Section 3.1]{MR4109138} and in particular, only the products $\mathrm{sVec}\boxtimes\mathcal{I}_q$ contain a Tannakian subcategory of rank 2.  Otherwise $\mathcal{D}\subset C_\mathcal{C}(\mathcal{C})$ and therefore $\mathcal{C}_A$ is a braided fusion category where $A$ is the regular algebra of $\mathcal{D}\simeq\mathrm{Rep}(C_2)\subset C_\mathcal{C}(\mathcal{C})$.

\par First consider the case $\mathcal{D}\subsetneq C_\mathcal{C}(\mathcal{C})$ is a proper fusion subcategory.  Since $C_\mathcal{C}(\mathcal{C})$ is symmetrically braided with a maximal Tannakian subcategory of dimension $2$, then $C_\mathcal{C}(\mathcal{C})$ must be pointed of rank $4$ with exactly two invertible objects of trivial twist and two of twist $-1$.  The nontrivial invertible objects in the center must centralize the simple objects not contained in the center, and so they must permute them by the balancing equation \cite[Proposition 8.13.8]{tcat}.  Thus $\mathcal{C}$ is a braided generalized near-group fusion category.  We must have $\dim(\mathcal{C})\in\mathbb{Z}$ since there are no irrational examples with four invertible objects and rank $6$ \cite{MR4658217}.  The dimensions of the noninvertible simple objects are thus $\sqrt{2}$ or $2$ since no rank $6$ pointed fusion category has a symmetric center of rank $4$.  Therefore the dimensions of the simple objects of $\mathcal{C}$ are $1,1,1,1,\sqrt{2},\sqrt{2}$ or $1,1,1,1,2,2$.  In the former case, $\mathcal{C}$ is described by \cite[Theorem 5.5]{MR4195420}.  In the latter case, the fusion rules are either those of $\mathrm{Rep}(\mathrm{Dic}_3)$ or $\mathrm{Rep}(C_2\times S_3)$ and are described in detail in Example \ref{ex:12} which gives $2$ categories up to braided equivalence.

\par Now assume $\mathcal{D}=C_\mathcal{C}(\mathcal{C})$ so that $\mathcal{C}_A$ is a modular fusion category as $\mathcal{D}$ is of both maximal rank and dimension in this case.  There are three subcases depending on the number of fixed-points of the $C_2$-action by braided tensor autoequivalences on $\mathcal{O}(\mathcal{C})$ which we illustrate in Figure \ref{fig:rank2action}.

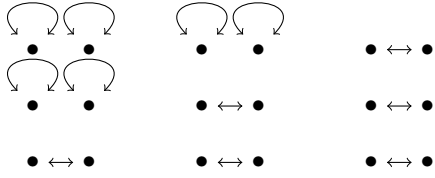
\begin{figure}[H]
\centering
\begin{align*}
\begin{tikzpicture}[scale=0.75]
\node (a) at (0,0) {$\bullet$};
\node (b) at (1,0) {$\bullet$};
\node (c) at (0,1) {$\bullet$};
\node (d) at (1,1) {$\bullet$};
\node (e) at (0,2) {$\bullet$};
\node (f) at (1,2) {$\bullet$};
\draw[<->] (a) -- (b);
\path[<->,out=45,in=135,looseness=5] (c) edge (c);
\path[<->,out=45,in=135,looseness=5] (d) edge (d);
\path[<->,out=45,in=135,looseness=5] (e) edge (e);
\path[<->,out=45,in=135,looseness=5] (f) edge (f);
\node (a1) at (3,0) {$\bullet$};
\node (b1) at (4,0) {$\bullet$};
\node (c1) at (3,1) {$\bullet$};
\node (d1) at (4,1) {$\bullet$};
\node (e1) at (3,2) {$\bullet$};
\node (f1) at (4,2) {$\bullet$};
\draw[<->] (a1) -- (b1);
\draw[<->] (c1) -- (d1);
\path[<->,out=45,in=135,looseness=5] (e1) edge (e1);
\path[<->,out=45,in=135,looseness=5] (f1) edge (f1);
\node (a2) at (6,0) {$\bullet$};
\node (b2) at (7,0) {$\bullet$};
\node (c2) at (6,1) {$\bullet$};
\node (d2) at (7,1) {$\bullet$};
\node (e2) at (6,2) {$\bullet$};
\node (f2) at (7,2) {$\bullet$};
\draw[<->] (a2) -- (b2);
\draw[<->] (c2) -- (d2);
\draw[<->] (e2) -- (f2);
\end{tikzpicture}
\end{align*}
    \caption{An illustration of the three cases of the $C_2$-action on $\mathcal{O}(\mathcal{C})$}%
    \label{fig:rank2action}%
\end{figure}

\paragraph{$C_2$-action has four fixed-points}  In this case $\mathcal{C}_A$ is a modular fusion category of rank $9$ and inversely, the $C_2$-action on $\mathcal{C}_A$ by braided tensor autoequivalences is fixed-point free.  Therefore the action is the duality autoequivalence \cite[Corollary 3.2]{schopieray2023fixedpointfree}.  As a result, $\mathcal{C}_A$ has odd dimension and is moreover pointed of rank $9$ (see \cite{MR4516198}, for example).  Therefore $\mathcal{C}$ has two invertible objects, and four simple objects of dimension $2$; all twists are $9$th roots of unity depending on the braiding and group structure of $\mathcal{C}_A$.  If any twist is trivial, then $\mathcal{C}_A$ has a nontrivial Tannakian subcategory of rank $3$, against our assumptions as then $\mathcal{C}$ would have a Tannakian subcategory braided equivalent to $\mathrm{Rep}(S_3)$.  Therefore the only option is that $\mathcal{C}\simeq(\mathcal{C}(C_3,q)^{\boxtimes2})^{C_2}$ is a braided equivalence where $C_2$ acts by duality; the braided categories $\mathcal{C}(C_3,q)^{\boxtimes2}$ for both nondegenerate quadratic forms $q$ are braided equivalent so this produces only one example. the adjoint subcategory of the categories $\mathcal{E}(q,\pm)$ from \cite[Example 5.3(b)]{MR2587410} with $q$ elliptic. 

\paragraph{$C_2$-action has two fixed-points} In this case $\mathcal{C}_A$ is modular of rank $6$, with two pairs of two simple objects with the same dimensions and same twists.  The only modular data of rank $6$ satisfying this constraint are pointed, or a product of a pointed modular fusion category of rank $3$ and $\mathcal{C}(A_1,5,q)_\mathrm{ad}$ with $q^2$ a primitive fifth root of unity \cite[Appendix E.5]{ng2023classification}.  Thus the dimensions of simple objects of $\mathcal{C}$ are $1,1,1,1,2,2$ or $1,1,2,(1/2)(1+\sqrt{5}),(1/2)(1+\sqrt{5}),1+\sqrt{5}$.  The former case was described in detail in Example \ref{ex:12}.  In the latter case, $C_2$ acts trivially by braided autoequivalences on the $\mathcal{C}(A_1,5,q)_\mathrm{ad}$ factor, hence $\mathcal{C}\simeq\mathrm{Rep}(S_3)^\omega\boxtimes\mathcal{C}(A_1,5,q)_\mathrm{ad}$ where $q^2$ is a primitive $5$th root of unity and $\omega$ a primitive third root of unity.

\paragraph{$C_2$-action is fixed-point free}  In this case $\mathcal{C}_A$ is a modular fusion category of rank $3$, which is either pointed, Ising, or a Galois conjugate of $\mathcal{C}(\mathfrak{sl}_2,7,q)_\mathrm{ad}$ where $q^2$ is a primitive $7$th root of unity (see Figure \ref{fig:catranklessthan4}).  There are no nontrivial braided autoequivalences of $\mathcal{C}(\mathfrak{sl}_2,7,q)_\mathrm{ad}$ \cite[Corollary 3.2]{MR4401829} so $\mathcal{C}\simeq\mathrm{Rep}(C_2)\boxtimes\mathcal{C}(\mathfrak{sl}_2,7,q)_\mathrm{ad}$ is a braided equivalence in the latter case; the same argument holds in the Ising case.  Otherwise, $\mathcal{C}$ is pointed of rank $6$ and the $C_2$-action on $\mathcal{O}(\mathcal{C}_A)$ is trivial, hence $\mathcal{C}\simeq\mathrm{Rep}(C_2)\boxtimes\mathcal{C}(C_3,q)$ is a braided equivalence for either nondegenerate quadratic form on $C_3$.


\subsubsection{Tannakian subcategory has maximal rank $1$}\label{sec6mod}

\par If $\mathcal{C}$ is a premodular fusion category of rank $6$ and there does not exist a Tannakian fusion subcategory of rank larger than $1$, then $C_\mathcal{C}(\mathcal{C})$ is braided equivalent to $\mathrm{Vec}$ or $\mathrm{sVec}$.  In the former case, $\mathcal{C}$ is a modular fusion category of rank $6$ whose modular data are listed in their entirety in \cite[Appendex E.5]{ng2023classification} (see also \cite[Tables 3-4]{MR4630478}).  The premodular fusion categories with $C_\mathcal{C}(\mathcal{C})\simeq\mathrm{sVec}$ a braided equivalence were classified in \cite{MR4109138}, and all but one family have a Tannakian subcategory of maximal rank $1$.  The braided equivalence classes of premodular categories for all of the above fusion rules are characterized as being products of smaller premodular categories or by \cite[Theorem A.3]{MR4486913} with three modular exceptions.  For two of these, indicated in Figure \ref{fig:nondegen} the authors know of no characterization of premodular categories with these fusion rules.  The last family is characterized as pair of inequivalent minimal modular extensions for each of the two inequivalent braided fusion categories $\mathrm{Rep}(D_5)^\mu$ from Figure \ref{fig:catrank4}.

\begin{figure}[H]
\centering
\begin{align*}
\begin{array}{|c|c|c|c|c|}
\hline \mathcal{C} & \mathrm{FPdim}(\mathcal{C}) & \mathrm{FPdims} &  \# \\\hline\hline
\mathcal{C}(C_2,q_1)\boxtimes\mathcal{C}(C_3,q_2) & 6 & 1,1,1,1,1,1  & 4 \\
\mathcal{C}(C_2,q_1)\boxtimes\mathcal{I}_{q_2} & 8 & 1,1,1,1,\sqrt{2},\sqrt{2} & 16 \\
\mathcal{C}(C_3,q_2)\boxtimes\mathcal{C}(A_1,5,q)_\mathrm{ad} & \frac{15}{4}\csc^2(\frac{\pi}{5}) & 1,1,1,[3]_5,[3]_5,[3]_5  & 8 \\
\mathcal{C}(A_1,5,q_1)_\mathrm{ad}\boxtimes\mathcal{I}_{q_2} & 5\csc^2(\frac{\pi}{5}) & 1,1,[3]_5,[3]_5, & 32 \\
 &  & [3]_5\sqrt{2},[3]_5\sqrt{2} &  \\
\mathcal{C}(C_2,q_1)\boxtimes\mathcal{C}(A_1,7,q_2)_\mathrm{ad} & \frac{7}{2}\csc^2(\frac{\pi}{7}) & 1,1,[3]_7,[3]_7,[5]_7,[5]_7  & 12 \\
\mathcal{C}(A_1,5,q_1)_\mathrm{ad}\boxtimes\mathcal{C}(A_1,7,q_2)_\mathrm{ad} & \frac{35\csc^2(\frac{\pi}{5})}{2(5-\sqrt{5})} & 1,[3]_7,[5]_7, &24 \\
& & [3]_5,[3]_5[3]_7,[3]_5[5]_7 & \\
\mathcal{C}(G_2,21,q) & \frac{21}{2}(5+\sqrt{21}) & 1,\frac{3+\sqrt{21}}{2},\frac{3+\sqrt{21}}{2},  & 12^\dagger \\
 & &\frac{3+\sqrt{21}}{2},\frac{5+\sqrt{21}}{2},\frac{7+\sqrt{21}}{2} &  \\
(\mathrm{Rep}(D_5)^\mu)^\gamma & 20 & 1,1,2,2,\sqrt{5},\sqrt{5}  & 4 \\
\mathcal{C}(A_1,13,q)_\mathrm{ad} & \frac{13}{4}\csc^2(\frac{\pi}{13}) & 1,[3]_{13},[5]_{13},  & 12 \\
& & [7]_{13},[9]_{13},[11]_{13} &  \\
\mathcal{C}(B_2,9,q)_\mathrm{ad} & 9u_1^2 &  1,u_1,u_1,u_1,u_1u_2,u_1^2u_2^{-1}  & 3^\dagger \\\hline
\hline
\mathrm{sVec}\boxtimes\mathcal{C}(C_3,q) & 6 & 1,1,1,1,1,1 & 2\\
\mathrm{sVec}\boxtimes\mathcal{C}(A_1,7,q)_\mathrm{ad} & \frac{7}{2}\csc^2(\frac{\pi}{7}) & 1,1,[3]_7,[3]_7,[5]_7,[5]_7 & 6 \\
\mathcal{C}(A_1,12,q)_\mathrm{ad} & 3\csc^2(\frac{\pi}{12}) & 1,1,[3]_{12},[5]_{12}, & 2  \\
& & [7]_{12},[9]_{12} & 
\\\hline
\end{array}
\end{align*}
    \caption{The $137$ braided equivalence classes of premodular fusion categories of rank $6$ whose Tannakian subcategory of maximal rank is $\mathrm{Vec}$, separated by modular (above) and supermodular (below).  We abbreviate $u_1=1-\zeta_9^4-\zeta_9^5$ and $u_2=\zeta_9-\zeta_9^2-\zeta_9^5$. \\[2mm] $^\dagger$ The authors are unaware of any proof in the literature that these are the only equivalence classes of premodular fusion categories with these fusion rules.}%
    \label{fig:nondegen}%
\end{figure}


\bibliographystyle{plain}
\bibliography{bib}

\begin{thebibliography}{10}

\bibitem{MR3308880}
Marcel Bischoff, Yasuyuki Kawahigashi, Roberto Longo, and Karl-Henning Rehren.
\newblock {\em Tensor categories and endomorphisms of von {N}eumann
  algebras---with applications to quantum field theory}, volume~3 of {\em
  SpringerBriefs in Mathematical Physics}.
\newblock Springer, Cham, 2015.

\bibitem{MR2535395}
Harvey~I. Blau.
\newblock Table algebras.
\newblock {\em European J. Combin.}, 30(6):1426--1455, 2009.

\bibitem{MR2863377}
Alain Brugui\`eres and Sonia Natale.
\newblock Exact sequences of tensor categories.
\newblock {\em Int. Math. Res. Not. IMRN}, (24):5644--5705, 2011.

\bibitem{MR3548123}
Paul Bruillard.
\newblock Rank 4 premodular categories.
\newblock {\em New York J. Math.}, 22:775--800, 2016.
\newblock With an Appendix by C\'{e}sar Galindo, Siu-Hung Ng, Julia Plavnik,
  Eric Rowell and Zhenghan Wang.

\bibitem{MR4109138}
Paul Bruillard, C\'{e}sar Galindo, Siu-Hung Ng, Julia~Y. Plavnik, Eric~C.
  Rowell, and Zhenghan Wang.
\newblock Classification of super-modular categories by rank.
\newblock {\em Algebr. Represent. Theory}, 23(3):795--809, 2020.

\bibitem{MR3632091}
Paul Bruillard, Siu-Hung Ng, Eric~C. Rowell, and Zhenghan Wang.
\newblock On classification of modular categories by rank.
\newblock {\em Int. Math. Res. Not. IMRN}, (24):7546--7588, 2016.

\bibitem{MR3743161}
Paul Bruillard and Carlos~M. Ortiz-Marrero.
\newblock Classification of rank 5 premodular categories.
\newblock {\em J. Math. Phys.}, 59(1):011702, 8, 2018.

\bibitem{MR3552799}
Sebastian Burciu.
\newblock On an analogue of a {B}rauer theorem for fusion categories.
\newblock {\em Monatsh. Math.}, 181(3):561--575, 2016.

\bibitem{MR3059899}
Sebastian Burciu and Sonia Natale.
\newblock Fusion rules of equivariantizations of fusion categories.
\newblock {\em J. Math. Phys.}, 54(1):013511, 21, 2013.

\bibitem{CZENKY2023}
Agustina Czenky, William Gvozdjak, and Julia Plavnik.
\newblock Classification of low-rank odd-dimensional modular categories.
\newblock {\em J. Algebra}, 655:223--293, 2024.

\bibitem{MR4516198}
Agustina Czenky and Julia Plavnik.
\newblock On odd-dimensional modular tensor categories.
\newblock {\em Algebra Number Theory}, 16(8):1919--1939, 2022.

\bibitem{davidovich2013arithmetic}
Orit Davidovich, Tobias Hagge, and Zhenghan Wang.
\newblock On arithmetic modular categories.
\newblock {\em arXiv e-prints}, arXiv:1305.2229, 2013.

\bibitem{MR3390789}
Alexei Davydov.
\newblock Bogomolov multiplier, double class-preserving automorphisms, and
  modular invariants for orbifolds.
\newblock {\em J. Math. Phys.}, 55(9):092305, 13, 2014.

\bibitem{MR3022755}
Alexei Davydov, Dmitri Nikshych, and Victor Ostrik.
\newblock On the structure of the {W}itt group of braided fusion categories.
\newblock {\em Selecta Math. (N.S.)}, 19(1):237--269, 2013.

\bibitem{MR4413278}
Jingcheng Dong, Gang Chen, and Zhihua Wang.
\newblock Fusion categories containing a fusion subcategory with maximal rank.
\newblock {\em J. Algebra}, 604:107--127, 2022.

\bibitem{MR4195420}
Jingcheng Dong, Sonia Natale, and Hua Sun.
\newblock A class of prime fusion categories of dimension {$2^N$}.
\newblock {\em New York J. Math.}, 27:141--163, 2021.

\bibitem{DGNO}
Vladimir Drinfeld, Shlomo Gelaki, Dmitri Nikshych, and Victor Ostrik.
\newblock On braided fusion categories. {I}.
\newblock {\em Selecta Math. (N.S.)}, 16(1):1--119, 2010.

\bibitem{MR4401829}
Cain Edie-Michell.
\newblock Auto-equivalences of the modular tensor categories of type {$A$},
  {$B$}, {$C$} and {$G$}.
\newblock {\em Adv. Math.}, 402:Paper No. 108364, 70, 2022.
\newblock With an appendix by Terry Gannon.

\bibitem{tcat}
Pavel Etingof, Shlomo Gelaki., Dmitri Nikshych, and Victor Ostrik.
\newblock {\em Tensor Categories}.
\newblock Mathematical Surveys and Monographs. American Mathematical Society,
  2015.

\bibitem{MR2677836}
Pavel Etingof, Dmitri Nikshych, and Victor Ostrik.
\newblock Fusion categories and homotopy theory.
\newblock {\em Quantum Topol.}, 1(3):209--273, 2010.
\newblock With an appendix by Ehud Meir.

\bibitem{solvable}
Pavel Etingof, Dmitri Nikshych, and Victor Ostrik.
\newblock Weakly group-theoretical and solvable fusion categories.
\newblock {\em Adv. Math.}, 226(1):176--205, 2011.

\bibitem{ENO}
Pavel Etingof, Dmitri Nikshych, and Viktor Ostrik.
\newblock On fusion categories.
\newblock {\em Ann. of Math. (2)}, 162(2):581--642, 2005.

\bibitem{MR4486913}
Pavel Etingof and Victor Ostrik.
\newblock On semisimplification of tensor categories.
\newblock In {\em Representation theory and algebraic geometry---a conference
  celebrating the birthdays of {S}asha {B}eilinson and {V}ictor {G}inzburg},
  Trends Math., pages 3--35. Birkh\"{a}user/Springer, Cham, 2022.

\bibitem{MR3167494}
David~E. Evans and Terry Gannon.
\newblock Near-group fusion categories and their doubles.
\newblock {\em Adv. Math.}, 255:586--640, 2014.

\bibitem{MR4635615}
Zhaobidan Feng, Shuang Ming, and Eric~C. Rowell.
\newblock Reconstructing braided subcategories of {$SU(N)_k$}.
\newblock {\em J. Algebra}, 635:436--458, 2023.

\bibitem{MR721927}
Stephen~M. Gagola, Jr.
\newblock Characters vanishing on all but two conjugacy classes.
\newblock {\em Pacific J. Math.}, 109(2):363--385, 1983.

\bibitem{MR4484228}
C\'{e}sar Galindo.
\newblock Trivializing group actions on braided crossed tensor categories and
  graded braided tensor categories.
\newblock {\em J. Math. Soc. Japan}, 74(3):735--752, 2022.

\bibitem{2019arXiv191212260G}
Terry {Gannon} and Andrew {Schopieray}.
\newblock {Algebraic number fields generated by Frobenius-Perron dimensions in
  fusion rings}.
\newblock {\em arXiv e-prints}, arXiv:1912.12260, 2019.

\bibitem{MR2587410}
Shlomo Gelaki, Deepak Naidu, and Dmitri Nikshych.
\newblock Centers of graded fusion categories.
\newblock {\em Algebra Number Theory}, 3(8):959--990, 2009.

\bibitem{MR3458181}
Nguyen~Ngoc Hung, Mark~L. Lewis, and Amanda~A. Schaeffer~Fry.
\newblock Finite groups with an irreducible character of large degree.
\newblock {\em Manuscripta Math.}, 149(3-4):523--546, 2016.

\bibitem{MR1237835}
David Kazhdan and Hans Wenzl.
\newblock Reconstructing monoidal categories.
\newblock In {\em I. {M}. {G}el{'}fand {S}eminar}, volume~16 of {\em Adv.
  Soviet Math.}, pages 111--136. Amer. Math. Soc., Providence, RI, 1993.

\bibitem{MR3613518}
Tian Lan, Liang Kong, and Xiao-Gang Wen.
\newblock Modular extensions of unitary braided fusion categories and {$2+1{\rm
  D}$} topological/{SPT} orders with symmetries.
\newblock {\em Comm. Math. Phys.}, 351(2):709--739, 2017.

\bibitem{MR3276228}
Mark~L. Lewis.
\newblock Bounding group orders by large character degrees: a question of
  {S}nyder.
\newblock {\em J. Group Theory}, 17(6):1081--1116, 2014.

\bibitem{MR933415}
G.~Lusztig.
\newblock Leading coefficients of character values of {H}ecke algebras.
\newblock In {\em The {A}rcata {C}onference on {R}epresentations of {F}inite
  {G}roups ({A}rcata, {C}alif., 1986)}, volume~47 of {\em Proc. Sympos. Pure
  Math.}, pages 235--262. Amer. Math. Soc., Providence, RI, 1987.

\bibitem{MR4254068}
Micha\"{e}l Mignard and Peter Schauenburg.
\newblock Modular categories are not determined by their modular data.
\newblock {\em Lett. Math. Phys.}, 111(3):Paper No. 60, 9, 2021.

\bibitem{mug1}
Michael M\"{u}ger.
\newblock On the structure of modular categories.
\newblock {\em Proc. London Math. Soc. (3)}, 87(2):291--308, 2003.

\bibitem{MR3012244}
Sonia Natale.
\newblock Semisimple {H}opf algebras and their representations.
\newblock {\em Publ. Mat. Urug.}, 12:123--167, 2011.

\bibitem{MR3770935}
Sonia Natale.
\newblock The core of a weakly group-theoretical braided fusion category.
\newblock {\em Internat. J. Math.}, 29(2):1850012, 23, 2018.

\bibitem{MR4630478}
Siu-Hung Ng, Eric~C. Rowell, Zhenghan Wang, and Xiao-Gang Wen.
\newblock Reconstruction of modular data from {${\rm SL}_2(\Bbb Z)$}
  representations.
\newblock {\em Comm. Math. Phys.}, 402(3):2465--2545, 2023.

\bibitem{ng2023classification}
Siu-Hung {Ng}, Eric~C. {Rowell}, and Xiao-Gang {Wen}.
\newblock {Classification of modular data up to rank 11}.
\newblock {\em arXiv e-prints}, arXiv:2308.09670, 2023.

\bibitem{MR4389082}
Siu-Hung Ng, Yilong Wang, and Qing Zhang.
\newblock Modular categories with transitive {G}alois actions.
\newblock {\em Comm. Math. Phys.}, 390(3):1271--1310, 2022.

\bibitem{MR3943750}
Dmitri Nikshych.
\newblock Classifying braidings on fusion categories.
\newblock In {\em Tensor categories and {H}opf algebras}, volume 728 of {\em
  Contemp. Math.}, pages 155--167. Amer. Math. Soc., Providence, RI, 2019.

\bibitem{premodular}
Victor Ostrik.
\newblock Pre-modular categories of rank 3.
\newblock {\em Mosc. Math. J.}, 8(1):111--118, 184, 2008.

\bibitem{codegrees}
Victor Ostrik.
\newblock On formal codegrees of fusion categories.
\newblock {\em Math. Res. Lett.}, 16(5):895--901, 2009.

\bibitem{ost15}
Victor Ostrik.
\newblock Pivotal fusion categories of rank 3.
\newblock {\em Mosc. Math. J.}, 15(2):373--396, 405, 2015.

\bibitem{MR3943751}
Victor Ostrik.
\newblock Remarks on global dimensions of fusion categories.
\newblock In {\em Tensor categories and {H}opf algebras}, volume 728 of {\em
  Contemp. Math.}, pages 169--180. Amer. Math. Soc., Providence, RI, 2019.

\bibitem{ostrik}
Viktor Ostrik.
\newblock Fusion categories of rank 2.
\newblock {\em Math. Res. Lett.}, 10(2-3):177--183, 2003.

\bibitem{plavnik2021modular}
Julia {Plavnik}, Andrew {Schopieray}, Zhiqiang {Yu}, and Qing {Zhang}.
\newblock {Modular tensor categories, subcategories, and Galois orbits}.
\newblock {\em arXiv e-prints}, arXiv:2111.05228, 2021.

\bibitem{MR2544735}
Eric Rowell, Richard Stong, and Zhenghan Wang.
\newblock On classification of modular tensor categories.
\newblock {\em Comm. Math. Phys.}, 292(2):343--389, 2009.

\bibitem{MR3777017}
Eric~C. Rowell and Zhenghan Wang.
\newblock Mathematics of topological quantum computing.
\newblock {\em Bull. Amer. Math. Soc. (N.S.)}, 55(2):183--238, 2018.

\bibitem{MR4079742}
Andrew Schopieray.
\newblock Lie theory for fusion categories: {A} research primer.
\newblock In {\em Topological phases of matter and quantum computation}, volume
  747 of {\em Contemp. Math.}, pages 1--26. Amer. Math. Soc., Providence, RI,
  2020.

\bibitem{MR4327964}
Andrew Schopieray.
\newblock Non-pseudounitary fusion.
\newblock {\em J. Pure Appl. Algebra}, 226(5):Paper No. 106927, 19, 2022.

\bibitem{MR4655273}
Andrew Schopieray.
\newblock Categorification of integral group rings extended by one dimension.
\newblock {\em J. Lond. Math. Soc. (2)}, 108(4):1617--1641, 2023.

\bibitem{schopieray2023fixedpointfree}
Andrew {Schopieray}.
\newblock {Fixed-point-free fusion automorphisms}.
\newblock {\em arXiv e-prints}, arXiv:2306.01666, 2023.

\bibitem{MR4658217}
Andrew Schopieray.
\newblock Irrational braided generalized near-groups.
\newblock {\em Canad. Math. Bull.}, 66(4):1255--1268, 2023.

\bibitem{argentina}
Andrew Schopieray.
\newblock Nondegenerate extensions of near-group braided fusion categories.
\newblock {\em Rev. Un. Mat. Argentina}, 64(2):413--438, 2023.

\bibitem{MR222160}
Gary Seitz.
\newblock Finite groups having only one irreducible representation of degree
  greater than one.
\newblock {\em Proc. Amer. Math. Soc.}, 19:459--461, 1968.

\bibitem{MR1997336}
Jacob Siehler.
\newblock Near-group categories.
\newblock {\em Algebr. Geom. Topol.}, 3:719--775, 2003.

\bibitem{siehler}
Jacob~A. {Siehler}.
\newblock {Braided Near-group Categories}.
\newblock {\em arXiv e-prints}, math/0011037, 2000.

\bibitem{MR2383494}
Noah Snyder.
\newblock Groups with a character of large degree.
\newblock {\em Proc. Amer. Math. Soc.}, 136(6):1893--1903, 2008.

\bibitem{MR1659954}
Daisuke Tambara and Shigeru Yamagami.
\newblock Tensor categories with fusion rules of self-duality for finite
  abelian groups.
\newblock {\em J. Algebra}, 209(2):692--707, 1998.

\bibitem{thornton2012generalized}
Josiah~E. Thornton.
\newblock {\em Generalized near-group categories}.
\newblock Doctoral dissertation, University of Oregon, 2012.

\bibitem{MR2308490}
A.~Vera-L\'{o}pez and Josu Sangroniz.
\newblock The finite groups with thirteen and fourteen conjugacy classes.
\newblock {\em Math. Nachr.}, 280(5-6):676--694, 2007.

\bibitem{MR4644689}
G.~Vercleyen and J.~K. Slingerland.
\newblock On low rank fusion rings.
\newblock {\em J. Math. Phys.}, 64(9):Paper No. 091703, 25, 2023.

\bibitem{vercleyen2024lowrank}
Gert Vercleyen.
\newblock {\em On Low-Rank Multiplicity-Free Fusion Categories}.
\newblock Doctoral dissertation, Maynooth University, 2024.

\end{thebibliography}

\end{document}